\documentclass[11pt, a4paper, UKenglish]{amsart}
\usepackage{geometry}
\geometry{a4paper, left=3cm, right=3cm, top=2cm, bottom=2cm}
\usepackage[UKenglish]{babel}
\usepackage[utf8]{inputenc}
\usepackage{csquotes}
\usepackage{tikz-cd}
\usepackage{float} 
\usepackage{comment}
\usepackage{mathabx}
\usepackage{amssymb,amsmath,mathtools,amsthm}
\usepackage[toc,page]{appendix}
\usepackage{enumitem}
\usepackage{hyperref}
\usepackage{graphicx}
\usepackage{setspace} 
\usepackage{indentfirst}
\usepackage{breakcites}
\hypersetup{
	colorlinks=true,
	linkcolor=black,   
	citecolor=black,   
	urlcolor=black     
}

\begin{filecontents}{Derived.bib}
@book{huybrechts2006fourier,
  title={Fourier-Mukai transforms in algebraic geometry},
  author={Huybrechts, Daniel and others},
  year={2006},
  publisher={Oxford University Press on Demand}}
@article{beckmann2020equivariant,
  title={On equivariant derived categories},
  author={Beckmann, Thorsten and Oberdieck, Georg},
  journal={European Journal of Mathematics},
  volume={9},
  number={2},
  pages={36},
  year={2023},
  publisher={Springer}
}
@article{beckmannOberdieck2020equivariant,
  title={Equivariant categories of symplectic surfaces and fixed loci of Bridgeland moduli spaces},
  author={Thorsten Beckmann and Georg Oberdieck},
  journal={Algebraic Geometry},
  year={2020},
  url={https://api.semanticscholar.org/CorpusID:227138086}
}
@misc{elagin2015equivariant,
      title={On equivariant triangulated categories}, 
      author={Alexey Elagin},
      year={2015},
      eprint={1403.7027},
      archivePrefix={arXiv},
      primaryClass={math.AG}
}
@article{2016Perry,
   title={Derived categories of cyclic covers and their branch divisors},
   volume={23},
   ISSN={1420-9020},
   url={http://dx.doi.org/10.1007/s00029-016-0243-0},
   DOI={10.1007/s00029-016-0243-0},
   number={1},
   journal={Selecta Mathematica},
   publisher={Springer Science and Business Media LLC},
   author={Kuznetsov, Alexander and Perry, Alexander},
   year={2016},
   month={5},
   pages={389–423} }
@article{2011Elagin,
   title={Cohomological descent theory for a morphism of stacks and for equivariant derived categories},
   volume={202},
   ISSN={1468-4802},
   url={http://dx.doi.org/10.1070/SM2011v202n04ABEH004153},
   DOI={10.1070/sm2011v202n04abeh004153},
   number={4},
   journal={Sbornik: Mathematics},
   publisher={IOP Publishing},
   author={Elagin, Alexei D},
   year={2011},
   month={4},
   pages={495–526} }
   
 @article{Bondal_1990,
	doi = {10.1070/im1990v035n03abeh000716},
	url = {https://doi.org/10.1070/im1990v035n03abeh000716},
	year = 1990,
	month = {6},
	publisher = {{IOP} Publishing},
	volume = {35},
	number = {3},
	pages = {519--541},
	author = {Alexey Bondal and Mikhail Kapranov},
	title = {{Representable} {functors}, {Serre} {functors}, {and} {Mutations}},
	journal = {Mathematics of the {USSR}-Izvestiya},
	abstract = {This paper studies the categorical version of the concept of mutations of an exceptional set, as used in the theory of vector bundles. The basic object of study is a triangulated category with a family of subcategories satisfying the so-called admissibility condition. A natural notion arising here is that of a Serre functor, effecting a certain duality in the triangulated category.

Bibliography: 16 titles.}
}

@article{Bondal_1990Asso,
	doi = {10.1070/im1990v034n01abeh000583},
	url = {https://doi.org/10.1070/im1990v034n01abeh000583},
	year = 1990,
	month = {2},
	publisher = {{IOP} Publishing},
	volume = {34},
	number = {1},
	pages = {23--42},
	author = {Alexey Bondal},
	title = {{Representation} {of} {associative} {algebras} {and} {coherent} {sheaves}},
	journal = {Mathematics of the {USSR}-Izvestiya},
	abstract = {It is proved that a triangulated category generated by a strong exceptional collection is equivalent to the derived category of modules over an algebra of homomorphisms of this collection. For the category of coherent sheaves on a Fano variety, the functor of tightening to a canonical class is described by means of mutations of an exceptional collection generating the category. The connection between mutability of strong exceptional collections and the Koszul property is studied. It is proved that in the geometric situation mutations of exceptional sheaves consist of present sheaves and the corresponding algebra of homomorphisms is Koszul and selfconsistent.

Bibliography: 18 titles.}
}

@article{0f6d937083584c2ba1ca157da0f5ba0a,
title = "The McKay correspondence as an equivalence of derived categories",
author = "Tom Bridgeland and Alastair King and Miles Reid",
year = "2001",
month = mar,
language = "English",
volume = "14",
pages = "535--554",
journal = "Journal of the American Mathematical Society",
issn = "0894-0347",
publisher = "American Mathematical Society",
number = "3",
}

@article{10.3792/pjaa.72.135,
author = {Yukari Ito and Iku Nakamura},
title = {{McKay correspondence and Hilbert schemes}},
volume = {72},
journal = {Proceedings of the Japan Academy, Series A, Mathematical Sciences},
number = {7},
publisher = {The Japan Academy},
pages = {135 -- 138},
year = {1996},
doi = {10.3792/pjaa.72.135},
URL = {https://doi.org/10.3792/pjaa.72.135}
}
@article{ito1999hilbert,
  title={Hilbert schemes and simple singularities},
  author={Ito, Yukari and Nakamura, Iku},
  journal={London Mathematical Society Lecture Note Series},
  pages={151--234},
  year={1999},
  publisher={Cambridge University Press}
}

@phdthesis{blume2007mckay,
  title={McKay correspondence and G-Hilbert schemes},
  author={Blume, Mark},
  year={2007},
  school={Universit{\"a}t T{\"u}bingen}
}

@book{fantechi2005fundamental,
  title={Fundamental Algebraic Geometry: Grothendieck's FGA Explained},
  author={Fantechi, B. and G{\"o}ttsche, L. and Illusie, L.},
  isbn={9780821842454},
  lccn={2005053614},
  series={Mathematical surveys and monographs},
  url={https://books.google.de/books?id=UNxU24JAHDoC},
  year={2005},
  publisher={American Mathematical Society}
}

@article{10.2307/2373541,
 ISSN = {00029327, 10806377},
 URL = {http://www.jstor.org/stable/2373541},
 author = {John Fogarty},
 journal = {American Journal of Mathematics},
 number = {2},
 pages = {511--521},
 publisher = {Johns Hopkins University Press},
 title = {Algebraic Families on an Algebraic Surface},
 urldate = {2022-05-15},
 volume = {90},
 year = {1968}
}

@phdthesis{terouanne:tel-00006683,
  TITLE = {{Correspondance de McKay: variations en dimension trois}},
  AUTHOR = {T{\'e}rouanne, Sophie},
  URL = {https://tel.archives-ouvertes.fr/tel-00006683},
  SCHOOL = {{Universit{\'e} Joseph-Fourier-Grenoble I}},
  YEAR = {2004},
  MONTH = Jun,
  KEYWORDS = {McKay correspondance ; Hilbert scheme ; resolutions of singularities ; quotient ; derived categories ; Correspondance de McKay ; sch{\'e}ma de Hilbert ; r{\'e}solutions de singularit{\'e}s ; quotient par un groupe fini ; cat{\'e}gorie d{\'e}riv{\'e}e},
  TYPE = {Theses},
  PDF = {https://tel.archives-ouvertes.fr/tel-00006683v2/file/tel-00006683.pdf},
  HAL_ID = {tel-00006683},
  HAL_VERSION = {v2},
}
@article{reid1985young,
  title={Young person’s guide to canonical singularities},
  author={Reid, Miles and others},
  journal={Algebraic geometry, Bowdoin},
  volume={46},
  pages={345--414},
  year={1985}
}

@article{reid1980canonical,
  title={Canonical 3-folds, Journ{\'e}es de G{\'e}om{\'e}trie Alg{\'e}brique d'Angers},
  author={Reid, Miles},
  journal={Sijthoff and Nordhoff},
  pages={273--310},
  year={1980},
  publisher={Alphen aan den Rijn}
}
@article{Orlov_1993,
	doi = {10.1070/im1993v041n01abeh002182},
	url = {https://doi.org/10.1070/im1993v041n01abeh002182},
	year = {1993},
	month = {2},
	publisher = {{IOP} Publishing},
	volume = {41},
	number = {1},
	pages = {133--141},
	author = {D O Orlov},
	title = {{Projective} {bundles}, {Monoidal} {transformations}, {and} {derived} {categories} {of} {coherent} {sheaves}},
	journal = {Russian Academy of Sciences. Izvestiya Mathematics},
	abstract = {This paper studies derived categories of coherent sheaves on varieties that are obtained by projectivization of vector bundles and by monoidal transformations. Conditions for the existence of complete exceptional sets in such categories are derived; they give new examples of varieties on which exceptional sets exist.}
}

@article{kuznetsov2007homological,
  title={Homological projective duality},
  author={Kuznetsov, Alexander},
  journal={Publications math{\'e}\\matiques},
  volume={105},
  number={1},
  pages={157--220},
  year={2007},
  publisher={Springer}
}

@article{Beilinson1978CoherentSO,
  title={Coherent sheaves on Pn and problems of linear algebra},
  author={A. A. Beilinson},
  journal={Functional Analysis and Its Applications},
  year={1978},
  volume={12},
  pages={214-216}
}

@article{article,
author = {Krug, Andreas},
year = {2018},
month = {06},
pages = {},
title = {Remarks on the derived McKay correspondence for Hilbert schemes of points and tautological bundles},
volume = {371},
journal = {Mathematische Annalen},
doi = {10.1007/s00208-018-1660-5}
}

@book{hartshorne2013algebraic,
  title={Algebraic geometry},
  author={Hartshorne, Robin},
  volume={52},
  year={2013},
  publisher={Springer Science \& Business Media}
}

@article{10.2307/2372388,
 ISSN = {00029327, 10806377},
 URL = {http://www.jstor.org/stable/2372388},
 author = {A. Grothendieck},
 journal = {American Journal of Mathematics},
 number = {1},
 pages = {121--138},
 publisher = {Johns Hopkins University Press},
 title = {Sur La Classification Des Fibres Holomorphes Sur La Sphere de Riemann},
 urldate = {2022-06-24},
 volume = {79},
 year = {1957}
}

@misc{stacks-project,
  author       = {The {Stacks project authors}},
  title        = {The Stacks project},
  howpublished = {\url{https://stacks.math.columbia.edu}},
  year         = {2022},
}

@phdthesis{Perry16alexander,
  title={Derived categories and birational geometry of Gushel-Mukai varieties},
  author={Perry, Alexander},
  year={2016},
  school={Harvard University}
}

@inproceedings{mckay37graphs,
  title={Graphs, singularities and finite groups, in``The Santa Cruz Conference of Finite Groups,''},
  author={McKay, J},
  booktitle={Proc. Symp. Pure Math},
  volume={37},
  pages={183}
}

@article{ASENS_1983_4_16_3_409_0,
     author = {Gonzalez-Sprinberg, Gerardo and Verdier, Jean-louis},
     title = {Construction g\'eom\'etrique de la correspondance de {McKay}},
     journal = {Annales scientifiques de l'\'Ecole Normale Sup\'erieure},
     pages = {409--449},
     publisher = {Elsevier},
     volume = {4e s{\'e}rie, 16},
     number = {3},
     year = {1983},
     doi = {10.24033/asens.1454},
     zbl = {0538.14033},
     mrnumber = {85k:14019},
     language = {fr},
     url = {http://www.numdam.org/articles/10.24033/asens.1454/}
}

@article{kapranov1998kleinian,
  title={Kleinian singularities, derived categories and Hall algebras},
  author={Kapranov, M and Vasserot, E},
  journal={Mathematische Annalen},
  volume={3},
  number={316},
  pages={565--576},
  year={2000}
}

@article{kuznetsov2010derived,
  title={Derived categories of cubic fourfolds, Cohomological and geometric approaches to rationality problems, 219--243},
  author={Kuznetsov, A},
  journal={Progr. Math},
  volume={282},
  year={2010}
}

@article{Huybrechts2019HochschildCV,
  title={Hochschild cohomology versus the Jacobian ring and the Torelli theorem for cubic fourfolds},
  author={Daniel Huybrechts and J{\o}rgen Vold Rennemo},
  journal={Algebraic Geometry},
  year={2019}
}

@article{https://doi.org/10.48550/arxiv.1809.09940,
author = {Hirano, Yuki and Ouchi, Genki},
title = {Derived factorization categories of non-Thom–S\\ebastiani-type sums of potentials},
journal = {Proceedings of the London Mathematical Society},
volume = {126},
number = {1},
pages = {1-75},
doi = {https://doi.org/10.1112/plms.12488},
url = {https://londmathsoc.onlinelibrary.wiley.com/doi/abs/10.1112/plms.12488},
abstract = {Abstract We first prove semi-orthogonal decompositions of derived factorization categories arising from sums of potentials of gauged Landau–Ginzburg models, where the sums are not necessarily Thom–Sebastiani type. We then apply the result to the category HMFLf(f)\$\operatorname{HMF}^{L\_f}(f)\$ of maximally graded matrix factorizations of an invertible polynomial f\$f\$ of chain type, and explicitly construct a full strong exceptional collection E1,⋯,Eμ\$E\_1,\hdots ,E\_{\mu }\$ in HMFLf(f)\$\operatorname{HMF}^{L\_f}(f)\$ whose length μ\$\mu\$ is the Milnor number of the Berglund–Hübsch transpose f∼\$\widetilde{f}\$ of f\$f\$. This proves a conjecture, which postulates that for an invertible polynomial f\$f\$ the category HMFLf(f)\$\operatorname{HMF}^{L\_f}(f)\$ admits a tilting object, in the case when f\$f\$ is a chain polynomial. Moreover, by careful analysis of morphisms between the exceptional objects Ei\$E\_i\$, we explicitly determine the quiver with relations (Q,I)\$(Q,I)\$ which represents the endomorphism ring of the associated tilting object ⊕i=1μEi\$\oplus \_{i=1}^{\mu }E\_i\$ in HMFLf(f)\$\operatorname{HMF}^{L\_f}(f)\$, and in particular we obtain an equivalence HMFLf(f)≅Db(modkQ/I)\$\operatorname{HMF}^{L\_f}(f)\cong \operatorname{D}^{\operatorname{b}}(\operatorname{mod}kQ/I)\$.},
year = {2023}
}

@article{LIM2021103944,
title = {Equivariant derived categories associated to a sum of two potentials},
journal = {Journal of Geometry and Physics},
volume = {160},
pages = {103944},
year = {2021},
issn = {0393-0440},
doi = {https://doi.org/10.1016/j.geomphys.2020.103944},
url = {https://www.sciencedirect.com/science/article/pii/S0393044020302278},
author = {Bronson Lim},
keywords = {Derived categories, Semi-orthogonal decompositions, Deligne-Mumford Stacks},
abstract = {Suppose f,g are homogeneous polynomials of degree d defining smooth hypersurfaces Xf=V(f)⊂Pm−1 and Xg=V(g)⊂Pn−1. Then the sum of f and g defines a smooth hypersurface X=V(f⊕g)⊂Pm+n−1 with an action of μd scaling the g variables. Motivated by the work of Orlov, we construct a semi-orthogonal decomposition of the derived category of coherent sheaves on [X∕μd] provided d≥max{m,n}.}
}

@article{ballard2014category,
  title={A category of kernels for equivariant factorizations and its implications for Hodge theory},
  author={Ballard, Matthew and Favero, David and Katzarkov, Ludmil},
  journal={Publications math{\'e}matiquesde l'IH{\'E}S},
  volume={120},
  number={1},
  pages={1--111},
  year={2014},
  publisher={Springer}
}

@article{laza2022automorphisms,
  title={Automorphisms and periods of cubic fourfolds},
  author={Laza, Radu and Zheng, Zhiwei},
  journal={Mathematische Zeitschrift},
  volume={300},
  number={2},
  year={2022},
  pages={1455--1507},
  publisher={Springer}
}

@unpublished{Casalaina-Martin:2023,
  author = {Sebastian Casalaina-Martin and Xianyu Hu and Xun Lin and Shizhuo Zhang and Zheng Zhang},
  title = {Equivariant Kuznetsov component of cubic threefold: Hodge theory, Hochschild cohomology and categorical Torelli theorems},
  note = {Paper in preparation, 2023}
}
\end{filecontents}
\usepackage[url=false,doi=false,isbn=false,backend=biber,style=alphabetic]{biblatex}

\addbibresource{Derived.bib}

\theoremstyle{definition}
\newtheorem{theorem}{Theorem}[section]
\newtheorem{example}[theorem]{Example}
\newtheorem{definition}[theorem]{Definition}
\newtheorem{remark}[theorem]{Remark}
\newtheorem{lemma}[theorem]{Lemma}
\newtheorem{proposition}[theorem]{Proposition}
\newtheorem{question}[theorem]{Question}

\newtheorem*{ack}{Acknowledgements}
\newtheorem*{notations}{Notations and Conventions}

\allowdisplaybreaks
\newcommand{\BA}{{\mathbb{A}}}

\newcommand{\BC}{{\mathbb{C}}}

\newcommand{\BP}{{\mathbb{P}}}
\newcommand{\BQ}{{\mathbb{Q}}}

\newcommand{\BZ}{{\mathbb{Z}}}

\newcommand{\CA}{{\mathcal A}}

\newcommand{\CD}{{\mathcal D}}

\newcommand{\CF}{{\mathcal F}}
\newcommand{\CG}{{\mathcal G}}
\newcommand{\CH}{{\mathcal H}}
\newcommand{\CI}{{\mathcal I}}

\newcommand{\CL}{{\mathcal L}}

\newcommand{\CO}{{\mathcal O}}

\newcommand{\CR}{{\mathcal R}}

\newcommand{\CZ}{{\mathcal Z}}

\newcommand{\BLP}{\text{Bl}_{\BP^{2}\times 0 \bigcup 0\times \BP^{2}}(\BP^{5})}
\newcommand{\Uxyy}{U_{x_{0},y_{0},y_{3}}}
\newcommand{\Uxxyy}{U^{\prime}_{x_{0},y_{0},y_{3}}}

\usepackage[UKenglish]{babel}

\title{Equivariant Kuznetsov Components of Certain Cubic Fourfolds} 

\author{Xianyu Hu}
\subjclass[2020]{14E16, 14F08, 14J35}
\keywords{Derived categories, Kuznetsov components, equivariant category, cubic fourfolds.}

\address{Fakult\"at f\"ur Mathematik,
Technische Universit\"at M\"unchen, D-85747 Garching bei M\"unchen, Germany}
\email{xianyu.hu@tum.de}
\thanks{The author is supported by the Deutsche
Forschungsgemeinschaft (DFG, German Research Foundation) -- 516701553.}
\date{December 28, 2023}

\begin{document}
\begin{abstract}
Let $M$ denote a specific cubic fourfold that accommodates a group action by $\mathbb{Z}/3\mathbb{Z}$.
Through utilization of derived Mckay correspondence, we present a new proof establishing the identification of the equivariant Kuznetsov component in the equivariant derived category of $M$ with the derived category of certain abelian surface. This surface naturally emerges from the defining equation of the cubic fourfold $M$.
\end{abstract}
\maketitle

\setcounter{tocdepth}{1} 
\tableofcontents
\section*{Introduction}
The Kuznetsov component $\mathcal{K}u_{M}$ of a cubic fourfold $M$, introduced in \cite{kuznetsov2010derived}, is a full triangulated category
\begin{equation*}
\mathcal{K}u_{M}\coloneqq \langle \CO_{M},\CO_{M}(1),\CO_{M}(2)\rangle^{\bot}\subset D^{b}(M)
\end{equation*}
defined as the complement of three line bundles. It turns out to be a subtler and more interesting derived invariant of $M$ than the whole derived category. 

For instance, the Kuznetsov component $\mathcal{K}u_{M}$ behaves in many ways like the derived category $D^{b}(S)$ of a K3 surface $S$. In \cite{kuznetsov2010derived}, the author proposed a seminal conjecture connecting  properties of $\mathcal{K}u_{M}$ to the rationality problem of the cubic fourfold $M$. Precisely, he conjectured that the cubic fourfold $M$ is rational if and only if its Kuznetsov component $\mathcal{K}u_{M}$ is equivalent to the derived category of a K3 surface. 

Assume the cubic fourfold $M$ admits a group action by a finite group $G$. Then the line bundle $\CO_{M}(1)$ and the semiorthogonal decomposition 
\begin{equation*}
D^{b}(M)=\langle \mathcal{K}u_{M},\CO_{M},\CO_{M}(1),\CO_{M}(2)\rangle
\end{equation*}
are preserved by the group action of $G$. Hence we obtain the semiorthogonal decomposition 
\begin{equation*}
D^{b}_{G}(M)=\langle \mathcal{K}u_{M}^{G},\langle\CO_{M}\rangle^{G},\langle\CO_{M}(1)\rangle^{G},\langle\CO_{M}(2)\rangle^{G}\rangle
\end{equation*}
of the equivariant derived category of $M$, where $\langle\CO_{M}(i)\rangle^{G}$ denotes the equivariant category of the subcategory $\langle\CO_{M}(i)\rangle$ for $i=0,1,2$ and 
\begin{equation*}
\mathcal{K}u_{M}^{G}\coloneqq \langle \langle\CO_{M}\rangle^{G},\langle\CO_{M}(1)\rangle^{G},\langle\CO_{M}(2)\rangle^{G} \rangle^{\bot}    
\end{equation*}
is called the equivariant Kuznetsov component of $M$. It is natural to ask whether $\mathcal{K}u_{M}^{G}$ is equivalent to the derived category of a smooth variety and whether explicit examples satisfying this property can be constructed.
 
We study such an example. Let $M:= V(F)\subset \BP^{5}$ be a smooth cubic fourfold, where $F=\,F_{0}(x_{0},x_{1},x_{2})+\,F_{1}(x_{3},x_{4},x_{5})$ for $F_{0}$, $F_{1}$ cubic equations and $G:=\,\BZ/3\BZ$. We pick a generator $g$ of $G$, whose action on $M$ is given by
\begin{center}
    $g_{.}[x_{0}:x_{1}:x_{2}:x_{3}:x_{4}:x_{5}]=[x_{0}:x_{1}:x_{2}:e^{2\pi i/3}x_{3}:e^{2\pi i/3}x_{4}:e^{2\pi i/3}x_{5}]$.
\end{center}
The quotient space $M/G$ is denoted by $X$.
For simplicity, we denote $V(F_{0})$ by $E_{0}$ and $V(F_{1})$ by $E_{1}$, both being smooth elliptic curves inside $\BP^{2}$.

The main result of this paper is
\begin{theorem}(Theorem \ref{main theorem})
$\mathcal{K}u_{M}^{G}\cong D^{b}(E_{0}\times E_{1})$.
\end{theorem}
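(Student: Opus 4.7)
The strategy is to apply the derived McKay correspondence to the $G$-action on $M$, to identify the resulting crepant resolution with the blowup $Y:=\mathrm{Bl}_{E_{0}\times E_{1}}(\BP^{2}\times \BP^{2})$, and then to compare Orlov's blowup decomposition of $D^{b}(Y)$ with the equivariant Kuznetsov decomposition of $D^{b}_{G}(M)$; matching the nine exceptional objects on each side will force $\mathcal{K}u_{M}^{G}\cong D^{b}(E_{0}\times E_{1})$.

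\textbf{Geometry of the resolution.} The $G$-fixed locus in $\BP^{5}$ is $P_{1}\sqcup P_{2}:=(\BP^{2}\times 0)\sqcup (0\times \BP^{2})$, with normal weights $(1,1,1)$ along each component; in particular the stabilisers act through $SL_{3}$ transversally. The variety $\BLP$ carries a natural $\BP^{1}$-bundle projection $\pi\colon \BLP \to \BP^{2}\times \BP^{2}$ resolving the rational map $[x_{0}{:}\cdots{:}x_{5}] \mapsto ([x_{0}{:}x_{1}{:}x_{2}],[x_{3}{:}x_{4}{:}x_{5}])$, and its two disjoint sections are the exceptional divisors. The strict transform $\widetilde M \subset \BLP$ is $\mathrm{Bl}_{E_{0}\sqcup E_{1}} M$; the induced $G$-action on $\widetilde M$ has fixed locus exactly $E_{0}\times \BP^{2}$ and $\BP^{2}\times E_{1}$, acting by $e^{2\pi i/3}$ on the normal line, so $Y := \widetilde M/G$ is smooth. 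A fibrewise analysis of $\pi|_{\widetilde M}$ shows that the fibre over $([a],[b])$ is cut out by $\alpha^{3}F_{0}(a)+\beta^{3}F_{1}(b)=0$: a single $G$-orbit of three points when $([a],[b])\notin E_{0}\times E_{1}$, degenerating to the entire $\BP^{1}$-fibre over $E_{0}\times E_{1}$. Passing to the $G$-quotient identifies $Y$ with $\mathrm{Bl}_{E_{0}\times E_{1}}(\BP^{2}\times \BP^{2})$.

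\textbf{Applying derived McKay and Orlov's formula.} Since the transverse singularities of $M/G$ along $E_{0}\sqcup E_{1}$ are of type $\tfrac{1}{3}(1,1,1)$ and $Y=\widetilde M/G$ is their crepant resolution (identifiable with $G\text{-}\mathrm{Hilb}(M)$), the Bridgeland--King--Reid-type derived McKay correspondence supplies a Fourier--Mukai equivalence $\Phi\colon D^{b}_{G}(M) \xrightarrow{\sim} D^{b}(Y)$. Orlov's blowup formula for the codimension-two centre $E_{0}\times E_{1}\subset \BP^{2}\times \BP^{2}$ gives
\[
D^{b}(Y) \;=\; \bigl\langle D^{b}(\BP^{2}\times \BP^{2}),\ D^{b}(E_{0}\times E_{1})\bigr\rangle,
\]
and Beilinson's theorem presents $D^{b}(\BP^{2}\times \BP^{2})$ as the length-nine exceptional collection $\{\CO(i,j)\}_{0\le i,j\le 2}$.

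\textbf{Matching the nine exceptional objects, and the main obstacle.} On the equivariant side, Elagin's theorem applied to Kuznetsov's decomposition yields
\[
D^{b}_{G}(M) \;=\; \bigl\langle \mathcal{K}u_{M}^{G},\ \langle\CO_{M}\rangle^{G},\ \langle\CO_{M}(1)\rangle^{G},\ \langle\CO_{M}(2)\rangle^{G}\bigr\rangle,
\]
and each block $\langle\CO_{M}(i)\rangle^{G}$ is freely generated by the three $G$-linearised line bundles $\CO_{M}(i)\otimes \chi$, $\chi\in \widehat{G}$, nine exceptional objects in total. The decisive step is to verify that $\Phi$ carries $\langle \CO_{M}(i)\otimes \chi : 0\le i\le 2,\ \chi\in\widehat{G}\rangle$ onto the Orlov block $D^{b}(\BP^{2}\times \BP^{2})\subset D^{b}(Y)$; once granted, taking right orthogonals gives $\Phi(\mathcal{K}u_{M}^{G})\cong D^{b}(E_{0}\times E_{1})$. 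The main technical hurdle is precisely this matching: one must trace the Fourier--Mukai kernel through the correspondence $\widetilde M \to M,\ \widetilde M\to Y$ and verify that, up to a controlled twist by the exceptional divisor of $Y\to \BP^{2}\times \BP^{2}$, the image $\Phi(\CO_{M}(i)\otimes \chi)$ is the pullback of $\CO(i',j'(\chi))$, with $\chi\mapsto j'(\chi)$ a bijection of $\widehat{G}$ onto $\{0,1,2\}$. This reduces to a finite cohomological bookkeeping, but care is required to confirm that the nine images really span the asserted block rather than merely lie inside it.
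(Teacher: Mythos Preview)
Your overall architecture is exactly the paper's: identify the BKR resolution with $Y=\mathrm{Bl}_{E_{0}\times E_{1}}(\BP^{2}\times\BP^{2})$, compare the Orlov blowup decomposition of $D^{b}(Y)$ with the equivariant Kuznetsov decomposition of $D^{b}_{G}(M)$, and match the nine exceptional objects to force the orthogonals to agree. The geometric identification and the invocation of Orlov/Beilinson are fine.

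The gap is in the last step, which you flag as a ``finite cohomological bookkeeping'' but then misjudge. Your expectation that $\Phi(\CO_{M}(i)\otimes\chi)$ is, up to a twist by the exceptional divisor, the pullback of some $\CO(i',j'(\chi))$ with $j'$ a bijection of $\widehat G$ onto $\{0,1,2\}$, is \emph{false}. When the paper actually computes the images (writing $\CO_{Y}(a,b,c)$ for $aE'+bH_{1}+cH_{2}$), three of the nine land outside the pullback block:
\[
\Psi(\CO_{M}\otimes\chi_{1})\cong\CO_{Y}(1,-2,-1),\qquad
\Psi(\CO_{M}\otimes\chi_{2})\cong\CO_{Y}(1,-1,-2),\qquad
\Psi(\CO_{M}(1)\otimes\chi_{1})\cong\CO_{Y}(1,-1,-1).
\]
These have nonzero exceptional coefficient \emph{and} negative $H_{i}$-degrees, so no uniform twist by $\CO_{Y}(mE')$ brings all nine into $\{\CO_{Y}(0,i,j):0\le i,j\le 2\}$. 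Consequently the subcategory generated by the nine images is \emph{not} the Orlov block $D_{0}=(p')^{*}D^{b}(\BP^{2}\times\BP^{2})$, and taking right orthogonals does not yet yield the conclusion.

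What the paper does to close this is a genuine extra argument using mutations. Starting from $D^{b}(Y)=\langle\Psi(\mathcal{K}u_{M}^{G}),\,\text{nine images}\rangle$, it repeatedly applies the Serre functor $(-)\otimes\omega_{Y}^{-1}$ (noting $\omega_{Y}^{-1}\cong\CO_{Y}(-1,3,3)$) to cycle terms from the left end to the right, exploits the complete orthogonality within each triple $\langle\CO_{M}(i)\otimes\chi_{0},\chi_{1},\chi_{2}\rangle$ to reorder, uses an ad hoc vanishing $\mathbf R\mathrm{Hom}(\CO_{Y},\CO_{Y}(1,-1,-1))=0$ to swap one more pair, and finally applies a left mutation past a six-term block. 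Only after this sequence do the nine exceptional line bundles become precisely $\CO_{Y}(0,i,j)$, $0\le i,j\le 2$, so that the remaining piece is $D_{0}^{\perp}=D_{-1}\cong D^{b}(E_{0}\times E_{1})$. You should either carry out this mutation argument explicitly, or replace your unsubstantiated matching claim by a direct proof that the nine images generate $D_{0}$ as an admissible subcategory---which is not obvious and is not what the paper does.
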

This has already been proved in \cite[Remark 5.8]{https://doi.org/10.48550/arxiv.1809.09940}, \cite[Main Theorem]{LIM2021103944}, \cite[Example 3.10]{ballard2014category} and \cite[Example 7.4]{beckmannOberdieck2020equivariant}. However, each of them used complicated methods and proved the result in different generalized forms. In this paper, we give a simpler proof in our particular case.

The main ingredient of our proof is the derived Mckay correspondence established in \cite{0f6d937083584c2ba1ca157da0f5ba0a} by Bridgeland, King and Reid. We use the $G$-Hilbert scheme $Y\coloneqq G\text{-Hilb}_{\BC}(M)$ to establish the following equivalence (see Theorem \ref{Theorem 3}):
\begin{theorem}
$D^{b}(Y)\cong D^{b}_{G}(M)$.
\end{theorem}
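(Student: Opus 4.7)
The plan is to apply the derived McKay correspondence of Bridgeland, King and Reid \cite{0f6d937083584c2ba1ca157da0f5ba0a}, which produces an equivalence $D^{b}(Y) \cong D^{b}_{G}(M)$ whenever $M$ is a smooth quasi-projective variety of dimension $n$, the stabilizers of the $G$-action lie in the special linear group, and $\dim(Y \times_{M/G} Y) \leq n+1$. With $n=4$, I need to verify the $SL$ condition pointwise and establish the fiber-product bound $\dim(Y \times_{M/G} Y) \leq 5$.

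The first step is to identify the fixed locus $M^G$. The $g$-fixed locus on $\BP^5$ is $\BP^2 \times \{0\} \sqcup \{0\} \times \BP^2$, which intersects $M = V(F_0 + F_1)$ in $E_0 \sqcup E_1$. At $p \in E_0$, one may choose local analytic coordinates on $M$ so that the direction tangent to $E_0$ is $G$-invariant and the remaining three directions carry the character $\zeta = e^{2\pi i/3}$; the weight sum $0+1+1+1 \equiv 0 \pmod 3$, so the stabilizer lies in $SL$. The analogous calculation at $q \in E_1$ gives weights $(0,2,2,2)$, again summing to $0 \pmod 3$. In both cases the transverse singularity of $M/G$ along $E_0 \sqcup E_1$ is of type $\BC^3 / \tfrac{1}{3}(1,1,1)$.

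For the fiber-product bound I would work \'etale-locally, using that for the $\tfrac{1}{3}(1,1,1)$-action on $\BC^3$ the $G$-Hilbert scheme is the crepant resolution $\mathrm{Tot}(\CO_{\BP^2}(-3))$ (see, e.g., \cite{10.3792/pjaa.72.135}). Since $G\text{-Hilb}$ commutes with $G$-equivariant \'etale base change, the Hilbert--Chow morphism $\pi \colon Y \to M/G$ is \'etale-locally a product of this resolution with a smooth curve factor; in particular $\pi$ is an isomorphism outside $E_0 \sqcup E_1$ and has $\BP^2$-fibers over each point of $E_0 \sqcup E_1$. Consequently $Y \times_{M/G} Y$ is isomorphic to $Y$ (dimension $4$) over the complement of $E_0 \sqcup E_1$ and contains a $\BP^2 \times \BP^2$-bundle over $E_0 \sqcup E_1$ of dimension $1+4 = 5$. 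Hence $\dim(Y \times_{M/G} Y) = 5$, meeting the BKR bound with equality.

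With both hypotheses verified, the Bridgeland--King--Reid theorem yields the Fourier--Mukai equivalence $D^{b}(Y) \cong D^{b}_{G}(M)$. The principal technical obstacle is the \'etale-local identification of $Y$ near the exceptional locus, which requires both that $G\text{-Hilb}$ localizes appropriately in the \'etale topology on $M/G$ and the explicit description of $G\text{-Hilb}(\BC^3)$ for the $\tfrac{1}{3}(1,1,1)$-action; both are classical and allow the verification to reduce to the local weight analysis above, so no delicate global analysis of $G$-clusters on $M$ is needed.
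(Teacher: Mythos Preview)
Your proposal is correct and follows the same high-level strategy as the paper: verify the hypotheses of the Bridgeland--King--Reid theorem and conclude. The difference lies in how the fiber-product bound is checked. The paper first establishes the global identification $Y\cong \text{Bl}_{E_{0}\times E_{1}}(\BP^{2}\times\BP^{2})$ (Theorem~\ref{Theorem 2}) via Blume's description of cyclic $G$-Hilbert schemes and an explicit affine-chart computation; irreducibility of $Y$ and the bound $\dim(Y\times_X Y)\le 5$ are then read off directly from that description together with $\dim M^{G}=1$. Your \'etale-local argument through the $\tfrac{1}{3}(1,1,1)$ model of $G\text{-Hilb}(\BC^{3})\cong\text{Tot}(\CO_{\BP^{2}}(-3))$ reaches the same bound without the global identification. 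Your route is more economical for this statement alone; the paper's route is longer but yields the explicit blow-up description of $Y$, which is indispensable later for the semiorthogonal decomposition of $D^{b}(Y)$ and the mutation calculations in Section~6.

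One small point worth making explicit in your argument: from the \'etale-local picture you should observe that the free-orbit locus is dense in each local model, so $G\text{-Hilb}_{\BC}(M)$ is globally irreducible and thus coincides with the component $Y$ in the BKR setup. The paper obtains this for free from the isomorphism with $\text{Bl}_{E_{0}\times E_{1}}(\BP^{2}\times\BP^{2})$.
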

The strategy is as follows. Take a (universal) family $\CZ\subset Y\times M$ together with natural projections
\begin{equation*}
Y\xleftarrow[]{q} \CZ \xrightarrow[]{p} M.  
\end{equation*}
According to \cite{0f6d937083584c2ba1ca157da0f5ba0a}, if the fiber product 
\begin{center}
    $Y\times_{X}Y\,=\,\left\{(y_{1},y_{2})\in Y\times Y\,|\,\tau(y_{1})=\,\tau(y_{2})\right\}\subset Y\times Y$
\end{center}
has dimension $\leq 5$, then the functor
\begin{center}
    $\Phi:=\,\textbf{R}q_{*}\circ p^{*} \colon \,D^{b}(Y)\longrightarrow D^{b}_{G}(M)$
\end{center}
induces an equivalence. By analyzing the geometry of the $G$-Hilbert scheme $Y$, we have the following description of $Y$ (see Theorem \ref{Theorem 2}):
\begin{theorem}
Let $M^{G}$ denotes the fixed locus of the $G$-action on $M$. Then
\begin{equation*}
 G\text{-Hilb}_{\BC}(M)\cong (\text{Bl}_{M^{G}}(M))/G\cong \text{Bl}_{E_{0}\times E_{1}}(\BP^{2}\times \BP^{2}).
\end{equation*}
  \end{theorem}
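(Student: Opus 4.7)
The plan is to establish the two claimed isomorphisms separately. The first identification $G\text{-Hilb}_{\BC}(M)\cong(\text{Bl}_{M^{G}}(M))/G$ will rest on the local McKay picture for the cyclic scalar quotient $\tfrac{1}{3}(1,1,1)$, globalised along $M^{G}$. The second identification $(\text{Bl}_{M^{G}}(M))/G\cong\text{Bl}_{E_{0}\times E_{1}}(\BP^{2}\times\BP^{2})$ will come from resolving the $G$-invariant rational map $\pi\colon M\dashrightarrow\BP^{2}\times\BP^{2}$, $[x_{0}:\ldots:x_{5}]\mapsto([x_{0}:x_{1}:x_{2}],[x_{3}:x_{4}:x_{5}])$, inside $\BP^{5}$.

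First I would describe $M^{G}$. Since $g$ acts on $\BP^{5}$ with eigenvalues $1,1,1,\zeta,\zeta,\zeta$ for $\zeta=e^{2\pi i/3}$, the fixed locus of $G$ in $\BP^{5}$ is the disjoint union $\{x_{3}=x_{4}=x_{5}=0\}\sqcup\{x_{0}=x_{1}=x_{2}=0\}$, and intersecting with $M=V(F_{0}+F_{1})$ yields $M^{G}=E_{0}\sqcup E_{1}$. Near a point of $E_{0}$, analytic coordinates $(t,x_{1},x_{2},x_{3})$ on $M$ with $t$ tangent to $E_{0}$ exhibit $G$ acting with weights $(0,1,1,1)$; transverse to $E_{0}$ the $G$-action is thus the standard $\tfrac{1}{3}(1,1,1)$-action on $\BA^{3}$, and likewise along $E_{1}$. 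The three-dimensional McKay correspondence of Ito--Nakamura then identifies $G\text{-Hilb}(\BA^{3},\tfrac{1}{3}(1,1,1))\cong(\text{Bl}_{0}(\BA^{3}))/G$ with the total space of $\CO_{\BP^{2}}(-3)$, the unique crepant resolution of the quotient singularity. Globalising this picture along $E_{0}$ and $E_{1}$ produces a smooth crepant resolution $(\text{Bl}_{M^{G}}(M))/G\to M/G$ which carries a canonical flat family of length-$3$ $G$-clusters on $M$ restricting fibrewise to the Ito--Nakamura family, and the induced classifying morphism to $G\text{-Hilb}_{\BC}(M)$ is an isomorphism.

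For the second isomorphism, I would resolve $\pi$ at the ambient level. Blowing up $\BP^{5}$ along the indeterminacy locus $\BP^{2}\sqcup\BP^{2}$ gives the $\BP^{1}$-bundle $\widetilde{\BP^{5}}=\BP_{\BP^{2}\times\BP^{2}}(\CO(1,0)\oplus\CO(0,1))$, on which $G$ acts on fibres by $[s:t]\mapsto[s:\zeta t]$ and trivially on the base. The proper transform of $M$ is $\text{Bl}_{E_{0}\sqcup E_{1}}(M)=\text{Bl}_{M^{G}}(M)$, cut out in $\widetilde{\BP^{5}}$ by the $G$-invariant equation $s^{3}F_{0}+t^{3}F_{1}=0$, and restricting the projection gives a $G$-equivariant degree-$3$ morphism to $\BP^{2}\times\BP^{2}$ that is generically a $G$-torsor. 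Passing to the quotient, one checks fibre by fibre that $(\text{Bl}_{M^{G}}(M))/G\to\BP^{2}\times\BP^{2}$ is an isomorphism outside $E_{0}\times E_{1}$, while over a point of $E_{0}\times E_{1}$ the entire $\BP^{1}$-fibre of $\widetilde{\BP^{5}}$ lies in $\text{Bl}_{M^{G}}(M)$ and its $G$-quotient is again a $\BP^{1}$. The exceptional locus is a Cartier divisor in the smooth variety $(\text{Bl}_{M^{G}}(M))/G$, so the universal property of the blow-up produces a morphism to $\text{Bl}_{E_{0}\times E_{1}}(\BP^{2}\times\BP^{2})$; this is a birational morphism between smooth projective varieties which is bijective on closed points, and hence an isomorphism.

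The main obstacle I foresee lies in the globalisation step of the first isomorphism: verifying that the family of length-$3$ $G$-clusters on $M$ naturally carried by $(\text{Bl}_{M^{G}}(M))/G$ really matches the universal family on $G\text{-Hilb}_{\BC}(M)$ requires an equivariant trivialisation of the normal bundles $N_{E_{i}/M}$ (all three weights equal to $\zeta$) and a careful comparison of scheme structures on the exceptional divisors above the fixed loci. Once this is in place, the second isomorphism reduces to the concrete fibrewise computation inside $\widetilde{\BP^{5}}$ sketched above.
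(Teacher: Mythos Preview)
Your treatment of the second isomorphism $(\text{Bl}_{M^{G}}(M))/G\cong\text{Bl}_{E_{0}\times E_{1}}(\BP^{2}\times\BP^{2})$ is correct and in fact more conceptual than the paper's. The paper writes out eighteen affine charts on each side and checks by hand that the local isomorphisms glue; you instead realise $\text{Bl}_{\BP^{2}\sqcup\BP^{2}}(\BP^{5})$ as the $\BP^{1}$-bundle $\BP_{\BP^{2}\times\BP^{2}}(\CO(1,0)\oplus\CO(0,1))$, read off the proper transform of $M$ as $s^{3}F_{0}+t^{3}F_{1}=0$, and invoke the universal property of the blow-up after passing to the $G$-quotient. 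This is a genuine alternative and buys you a global, coordinate-free picture of the morphism $\text{Bl}_{M^{G}}(M)\to\BP^{2}\times\BP^{2}$ that the paper only sees chart by chart.

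The gap you flag in the first isomorphism is real, and the paper resolves it by a different and much shorter route than the one you propose. Rather than constructing a global family of $G$-clusters on $(\text{Bl}_{M^{G}}(M))/G$ and arguing that the classifying map is an isomorphism, the paper uses two formal properties of the $G$-Hilbert functor: it commutes with base change along $M/G\hookrightarrow\BP^{5}/G$, and it factors out a trivial $G$-summand, so that on each standard affine chart $\BA^{5}\subset\BP^{5}$ one has $G\text{-Hilb}_{\BC}(\BA^{5})\cong\BA^{2}\times G\text{-Hilb}_{\BC}(\BA^{3})$. This reduces everything to the single local input you already cite, namely $G\text{-Hilb}_{\BC}(\BA^{3})\cong(\text{Bl}_{0}\BA^{3})/G$ for the $\tfrac{1}{3}(1,1,1)$-action, and the globalisation is then automatic from the compatibility of these base-change isomorphisms, with no need to trivialise normal bundles or compare scheme structures on exceptional divisors. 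Your proposed globalisation could presumably be made to work, but it would require exactly the delicate verifications you mention; the paper's base-change argument sidesteps them entirely.
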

Then we verify that $\text{dim } Y\times_{X}Y\leq 5$ and obtain the equivalence. We point out that our method to compute $G\text{-Hilbert schemes}$ can be applied to prove categorical Torelli theorem via equivariant Kuznetsov component of the non-Eckhardt type cubic threefold associated with involution, see \cite{Casalaina-Martin:2023}.

Our next step is to identify $\mathcal{K}u_{M}^{G}$ with $D^{b}(E_{0}\times E_{1})$ via an explicit functor. Let $\CO_{Y}(a,b,c)$ be the line bundle associated to the divisor $aE^{\prime}+bH_{1}+cH_{2}$, where $E^{\prime}$ is the exceptional divisor of the blow-up $\text{Bl}_{E_{0}\times E_{1}}(\BP^{2}\times \BP^{2})$, and $H_{i}$ denotes the divisor coming from the $i$-th factor of $\BP^{2}\times \BP^{2}$ for $i=1,\,2$. We explain in Section \ref{Section 5.2} that 
$D^{b}(Y)$ and $D^{b}_{G}(M)$ admit the following semiorthogonal decompositions 
 \begin{align*}
 D^{b}(Y)= \langle & D^{b}(E_{0}\times E_{1}),\CO_{Y}(0,0,0),\CO_{Y}(0,0,1),
       \CO_{Y}(0,0,2),
       \CO_{Y}(0,1,0),\\
      &\CO_{Y}(0,1,1),\CO_{Y}(0,1,2),\CO_{Y}(0,2,0),\CO_{Y}(0,2, 1),\CO_{Y}(0,2,2)\rangle,\\
 D^{b}_{G}(M)=  \langle & \mathcal{K}u_{M}^{G}, \CO_{M}\otimes \chi_{0}, \CO_{M}\otimes \chi_{1},
    \CO_{M}\otimes \chi_{2}, \CO_{M}(1)\otimes \chi_{0},
     \CO_{M}(1)\otimes \chi_{1},\\
     & \CO_{M}(1)\otimes \chi_{2}, \CO_{M}(2)\otimes \chi_{0},\CO_{M}(2)\otimes \chi_{1},\CO_{M}(2)\otimes \chi_{2} \rangle,  
 \end{align*}
where $\chi_{0},\chi_{1},\chi_{2}$ are characters of the group $G$ and $\CO_{M}(i)\otimes \chi_{0}$ is endowed with the canonical $G$-structure.

Naively, one may try to compare the image of $D^{b}(E_{0}\times E_{1})$ under the functor $\Phi$ with the equivariant Kuznetsov component $\mathcal{K}u_{M}^{G}$ and obtain the desired equivalence. However, for practical reasons, it is more convenient to use a functor in the other direction, i.e.\ let
$$\Psi\coloneqq [p_{*}\circ \textbf{L}q^{*}_{G}]^{G}:D^{b}_{G}(M)\longrightarrow D^{b}(Y),$$
where $p_{*}^{G}$ and $[\cdot]^{G}$ denote the equivariant pushforward and $G$-invarianrt functor.
Since $\Psi$ shares the same Fourier-Mukai kernel as $\Phi$, it is also an equivalence. 

After an explicit computation (see Proposition \ref{Theorem about a}, \ref{Theorem about b} and \ref{theorem about c}), we get a semiorthogonal decomposition of $D^{b}(Y)$ in terms of $\Psi(D^{b}_{G})$:
\begin{proposition}  
\begin{align*}
 D^{b}(Y)=  \langle &\Psi(\mathcal{K}u_{G}(M)), \CO_{Y},\CO_{Y}(1,-2,-1),\CO_{Y}(1,-1,-2),  
 \CO_{Y}(0,1,0),\\
&\CO_{Y}(1,-1,-1),\CO_{Y}(0,0,1),\CO_{Y}(0,2,0),\CO_{Y}(0,0,2),  \CO_{Y}(0,1,1)\rangle.       
\end{align*}      
\end{proposition}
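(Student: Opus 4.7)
The plan is to transport the semiorthogonal decomposition of $D^{b}_{G}(M)$ across the equivalence $\Psi$ and then identify each image. Since $\Psi$ is built from the same Fourier-Mukai kernel as $\Phi$, it is itself an equivalence by Theorem \ref{Theorem 3}, and hence preserves semiorthogonal decompositions. Applying $\Psi$ to
\begin{align*}
D^{b}_{G}(M)=\langle \mathcal{K}u_{M}^{G},\,\CO_{M}\otimes\chi_{0},\,\ldots,\,\CO_{M}(2)\otimes\chi_{2}\rangle
\end{align*}
yields at once
\begin{align*}
D^{b}(Y)=\langle \Psi(\mathcal{K}u_{M}^{G}),\,\Psi(\CO_{M}\otimes\chi_{0}),\,\ldots,\,\Psi(\CO_{M}(2)\otimes\chi_{2})\rangle,
\end{align*}
and the proposition reduces to identifying each of the nine images $\Psi(\CO_{M}(i)\otimes\chi_{j})$ with the line bundle $\CO_{Y}(a,b,c)$ listed in the statement.

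These nine identifications are exactly the content of Propositions \ref{Theorem about a}, \ref{Theorem about b}, and \ref{theorem about c}, presumably organised by the twist $i\in\{0,1,2\}$. To carry out each one, I would unpack the definition $\Psi=[p_{*}\circ\mathbf{L}q^{*}_{G}]^{G}$: pull back along the appropriate projection on $\CZ\subset Y\times M$, push forward along the other, and extract $G$-invariants. Under the identification $Y\cong\mathrm{Bl}_{E_{0}\times E_{1}}(\BP^{2}\times\BP^{2})$ of Theorem \ref{Theorem 2}, the pullback of $\CO_{M}(i)$ decomposes as a combination of the hyperplane classes $H_{1}^{i}$ and $H_{2}^{i}$, while the $G$-weights of the conormal bundle to the fixed locus $E_{0}\sqcup E_{1}\subset M^{G}$ dictate how each character $\chi_{j}$ contributes to the exponent $a$ of the exceptional divisor $E'$. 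Concretely, one expects the trivial character to give $a=0$, while the two nontrivial characters of $G=\BZ/3\BZ$ contribute $a=1$ together with negative twists by $H_{1}$ and $H_{2}$ that account for the entries $(1,-2,-1)$, $(1,-1,-2)$, and $(1,-1,-1)$ in the statement.

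The hard part is this $G$-weight bookkeeping near the exceptional divisor. Although each individual pushforward-and-invariants calculation is routine by the projection formula once the eigenweights of the conormal bundle are known, ensuring that all nine exponents $(a,b,c)$ come out exactly as stated requires a careful case-by-case analysis of the map $q\colon\CZ\to Y$ over the three distinguished loci (generic points of $Y$, points of $E'$, and their intersection with the proper transforms of $H_{1}$ and $H_{2}$); this is what the three auxiliary propositions accomplish. Granted these, the desired decomposition follows immediately from the preservation of semiorthogonal decompositions under the equivalence $\Psi$.
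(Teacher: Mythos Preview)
Your core argument is correct and matches the paper exactly: transport the semiorthogonal decomposition of $D^{b}_{G}(M)$ through the equivalence $\Psi$, then invoke Propositions \ref{Theorem about a}, \ref{Theorem about b}, \ref{theorem about c} to identify each $\Psi(\CO_{M}(i)\otimes\chi_{j})$ with the listed line bundle.

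Two small corrections to your speculation about those auxiliary propositions. First, they are organised by divisor coefficient, not by the twist $i$: Proposition \ref{Theorem about a} computes all nine $a_{i,j}$ (the coefficient of $E'$), Proposition \ref{Theorem about b} all nine $b_{i,j}$ (the coefficient of $H_1$), and Proposition \ref{theorem about c} all nine $c_{i,j}$. Second, the method is not a projection-formula eigenweight analysis but a direct restriction to test curves: one restricts $\Psi(\CO_{M}(i)\otimes\chi_{j})$ to a fibre $\BP^1$ of $E'\to E_0\times E_1$ (for $a_{i,j}$) or to a line $L_y\cong\BP^1$ in a $\BP^2$ factor (for $b_{i,j}$, $c_{i,j}$), and then reads off the degree from $h^0$ and $h^1$ of the resulting line bundle, computed as $G$-invariant pieces of cohomology on the preimage curve. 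In particular your heuristic that nontrivial characters force $a=1$ is only valid for $i=0$; for $i=2$ all three $a_{2,j}$ vanish.
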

Then by using mutation functors, we can finally identify the equivariant Kuznestsov component $\mathcal{K}u_{M}^{G}$ with $D^{b}(E_{0}\times E_{1})$.

\subsection*{Open questions}
 In \cite{laza2022automorphisms}, Laza and Zheng gave a complete classificaton of symplectic automorphism groups of cubic fourfolds and the case that we considered is one of them. It is natural to ask
 \begin{question}
  Can the derived Mckay correspondence be applied to other cases listed in \cite[Theorem 1.2 and 1.8]{laza2022automorphisms}?
 \end{question}
In general, if a cubic fourfold $M$ admits a finite symplectic automorphism group action of $G$, then the corresponding equivariant Kuznetsov component $\mathcal{K}u_{M}^{G}$ is known to be a $2$-Calabi-Yau category (see \cite[Section 6.3 \& 6.4]{beckmann2020equivariant} and \cite[Proposition 4.3]{beckmannOberdieck2020equivariant}). Since finding new examples of $2$-Calabi-Yau categories arising from geometry is an interesting task, we are wondering:
\begin{question}
Do equivariant Kuznetsov components $\mathcal{K}u_{M}^{G}$ provide us new examples of $2$-Calabi-Yau categories?
\end{question}
\begin{notations}
We always work over $\BC$. The bounded derived category of coherent sheaves on a smooth projective variety $X$ is denoted by $D^{b}(X)$.    
\end{notations}
\begin{ack}
This work is part of my master's thesis.
First and foremost, I thank my master's thesis advisors Georg Oberdieck and Dominique Mattei for answering my questions and helpful comments on earlier versions of this paper. This paper benefits from fruitful discussions with Jiexiang Huang, Adam Dauser and Tianyi Feng.
\end{ack}

{\Large{\part{{Preliminaries}}}}
\section{Semiorthogonal decomposition}
We recall some well-known facts about semiorthogonal decomposition and mutation functors closely following the treatment in \cite{2016Perry}. The reader can consult \cite{Bondal_1990}, \cite{Bondal_1990Asso} and \cite{huybrechts2006fourier} for more details.
\begin{definition}\label{definition 1}
Let $\Delta$ be a triangulated category. A \textit{semiorthogonal decomposition}
\begin{center}
    $\Delta=\,\langle \CA_{1},...,\CA_{n}\rangle$
\end{center}
is a sequence of full triangulated subcategories $\CA_{1},...,\CA_{n}$ of $\Delta$, which are called the \textit{components} of the decomposition, such that:
\begin{enumerate}
    \item For all $i\geq j$, $\CF_{i}\in \CA_{i}$, $\text{Hom}(\CF_{i},\CF_{j})=0$;
    \item For any $\CF\in\Delta$, there is a sequence of morphisms
    \begin{center}
        $0=\CF_{n}\rightarrow\CF_{n-1}\rightarrow \cdot \rightarrow \CF_{1}\rightarrow \CF_{0}=\CF$
    \end{center}
    such that Cone$(\CF_{i}\rightarrow\CF_{i-1})\in \CA_{i}$.
\end{enumerate}
Moreover, if $\text{Hom}(\CF_{i},\CF_{j})=\text{Hom}(\CF_{j},\CF_{i})=0$ for all $i,j,\CF_{i}\in \CA_{i}$ and $\CF_{j}\in\CA_{j}$, then we call $\langle \CA_{1},...,\CA_{n}\rangle$ a \textit{completely orthogonal decomposition} of $\Delta$.
\end{definition}\label{definition 2}
\begin{definition}\cite[page 3]{2016Perry}
A full triangulated subcategory $\alpha:\CA\hookrightarrow \Delta$ is called \textit{right admissible} if the inclusion functor $\alpha$ has a right adjoint $\alpha^{!}:\Delta\rightarrow\CA$, \textit{left admissible} if $\alpha$ has a left adjoint $\alpha^{*}\colon\Delta\rightarrow\CA$, and \textit{admissible} if it is both right and left admissible.
\end{definition}
If $\alpha:\CA\hookrightarrow\Delta$ is right admissible, then there is a semiorthogonal decomposition \begin{center}
    $\Delta=\,\langle\CA^{\bot},\,\CA \rangle$,
\end{center}
and if $\CA$ is left admissible, then there is a semiorthogonal decomposition
\begin{center}
    $\Delta=\,\langle \CA, \,^{\bot}\CA\rangle$.
\end{center}
Here $\CA^{\bot}$ resp.\,$^{\bot}\CA$ denotes the \textit{right} resp.\,\textit{left orthogonal} categories to $\CA$ respectively, defined as the full subcategories of $\Delta$ given by 
\begin{center}
    $\CA^{\bot}=\,\{\,\CF\in\Delta \mid$ Hom$(\CG,\CF)=0$ for all $\CG\in \CA\,\}$,\\
    $^{\bot}\CA=\,\{\,\CF\in\Delta \mid$ Hom$(\CF,\CG)=0$ for all $\CG\in \CA\,\}$.
\end{center}
\subsection{Mutations}

Let $\alpha\colon\CA\hookrightarrow\Delta$ be admissible, then for any object $\CF\in\Delta$, the counit morphism $\alpha\alpha^{!}(\CF)\rightarrow \CF$ can be completed to a distinguished triangle
\begin{center}
    $\alpha\alpha^{!}(\CF)\rightarrow\CF\rightarrow \CL_{\CA}(\CF)$,
\end{center}
where $\CL_{\CA}(\CF)$ is defined as the cone of the counit morphism. Similarly, for the unit morphism $\CF\rightarrow \alpha\alpha^{*}(\CF)$, there is also a distinguished triangle 
\begin{center}
    $\CR_{\CA}(\CF)\rightarrow\CF\rightarrow\alpha\alpha^{*}(\CF)$
\end{center}
 Since these triangles are functorial (see \cite[Remark 2.2]{2016Perry}), we can define the functors:
\begin{center}
    $\CL_{\CA}:\Delta\rightarrow\Delta$\, and \, $\CR_{\CA}:\Delta\rightarrow\Delta$
\end{center}
called the \textit{left} and \textit{right mutation functors} of $\CA\hookrightarrow\Delta$.

The restrictions:
\begin{center}
    $\CL_{\CA}\mid_{^{\bot}\CA}:\,^{\bot}\CA\rightarrow\,\CA^{\bot}$\,\,and \,\,$\CR_{\CA}\mid_{^{\bot}\CA}:\,\CA^{\bot}\rightarrow\,^{\bot}\CA$
\end{center}
are mutually inverse equivalences (by\cite[Lemma 1.9]{Bondal_1990}). 
The following proposition explains that
the mutation functors $\CL_{\CA},\CR_{\CA}$ act on semiorthogonal decompositions, which will turn out to be exceptionally useful for our later purpose.
\begin{proposition}\cite[Lemma 1.9]{Bondal_1990}
Let $\Delta=\,\langle \CA_{1},...,\CA_{n}\rangle$ be a semiorthogonal decompositions with admissible components. Then for $1\leq i\leq n-1$
\begin{center}
    $\Delta=\,\langle \CA_{1},...,\CA_{i-1},\CL_{\CA_{i}}(\CA_{i+1}),\CA_{i},\CA_{i+2},...,\CA_{n}\rangle$
\end{center}
 is a semiorthogonal decomposition, and for $2\leq i-2\leq n$
\begin{center}
    $\Delta=\,\langle \CA_{1},...,\CA_{i-2},\CA_{i},\CR_{\CA_{i}}(\CA_{i-1}),\CA_{i+1},...,\CA_{n}\rangle$
\end{center}
is a semiorthogonal decomposition.
\end{proposition}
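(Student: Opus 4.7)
The plan is to reduce the claim to a local two-component mutation inside $\langle\CA_{i},\CA_{i+1}\rangle$ and then patch it back into the full decomposition. I will focus on the left-mutation statement; the right-mutation version follows formally by dualization (passing to the opposite category swaps left and right adjoints and reverses the order of components). The work splits into three pieces: verifying that $\CL_{\CA_{i}}(\CA_{i+1})$ is a full triangulated subcategory, establishing the relevant $\Hom$-vanishings among the new components, and producing, for each $\CF\in\Delta$, a filtration adapted to the new ordering.

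The first two pieces are essentially formal. Semiorthogonality of the original decomposition forces $\CA_{i+1}\subset{}^{\bot}\CA_{i}$, so the equivalence $\CL_{\CA_{i}}\mid_{{}^{\bot}\CA_{i}}\colon{}^{\bot}\CA_{i}\to\CA_{i}^{\bot}$ recalled just above the proposition shows that $\CL_{\CA_{i}}(\CA_{i+1})$ is fully embedded, triangulated, and contained in $\CA_{i}^{\bot}$. The latter inclusion already yields $\Hom(\CA_{i},\CL_{\CA_{i}}(\CA_{i+1}))=0$, which is the only new $\Hom$-vanishing not inherited from the old decomposition. The remaining new vanishings, namely $\Hom(\CL_{\CA_{i}}(\CA_{i+1}),\CA_{j})=0$ for $j<i$ and $\Hom(\CA_{k},\CL_{\CA_{i}}(\CA_{i+1}))=0$ for $k\geq i+2$, follow by applying $\Hom(-,\CA_{j})$ or $\Hom(\CA_{k},-)$ to the defining triangle
\[
\alpha_{i}\alpha_{i}^{!}(\CF)\to\CF\to\CL_{\CA_{i}}(\CF)
\]
for $\CF\in\CA_{i+1}$: the outer two terms lie in $\CA_{i}$ and $\CA_{i+1}$, both of which already have the required $\Hom$-vanishings with $\CA_{j}$ and $\CA_{k}$ in every cohomological shift, so the long exact sequence forces the vanishing for $\CL_{\CA_{i}}(\CF)$ as well.

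The main obstacle is constructing the filtration. I would first treat the two-term case: for any $\CH\in\langle\CA_{i},\CA_{i+1}\rangle$ with old filtration triangle $b\to\CH\to a$ (where $b\in\CA_{i+1}$ and $a\in\CA_{i}$), applying $\CL_{\CA_{i}}$ to this triangle and using $\CL_{\CA_{i}}(a)=0$ yields $\CL_{\CA_{i}}(\CH)\cong\CL_{\CA_{i}}(b)\in\CL_{\CA_{i}}(\CA_{i+1})$. The defining triangle $\alpha_{i}\alpha_{i}^{!}(\CH)\to\CH\to\CL_{\CA_{i}}(\CH)$ then serves as the new filtration triangle: its first term lies in $\CA_{i}$ and its cone lies in $\CL_{\CA_{i}}(\CA_{i+1})$, matching the ordering $\langle\CL_{\CA_{i}}(\CA_{i+1}),\CA_{i}\rangle$ within $\langle\CA_{i},\CA_{i+1}\rangle$. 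For a general $\CF\in\Delta$, I would take its original filtration $0=\CF_{n}\to\cdots\to\CF_{0}=\CF$ and apply this local surgery only to the two-term chunk carrying the components $c_{i}\in\CA_{i}$ and $c_{i+1}\in\CA_{i+1}$, leaving every other step unchanged. I expect this local-to-global patching, together with the verification that the resulting sequence of morphisms still assembles into a valid filtration of $\CF$, to be the most delicate piece, while the rest of the proposition follows essentially formally from the mutation properties recalled earlier in the excerpt.
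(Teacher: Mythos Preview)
The paper does not prove this proposition at all: it is stated with a citation to \cite[Lemma 1.9]{Bondal_1990} and no proof environment follows. So there is no ``paper's own proof'' to compare against. Your argument is the standard one and is essentially correct: you use the equivalence $\CL_{\CA_{i}}|_{{}^{\bot}\CA_{i}}\colon{}^{\bot}\CA_{i}\to\CA_{i}^{\bot}$ (recalled in the paper just before the proposition) to identify $\CL_{\CA_{i}}(\CA_{i+1})$ as a full triangulated subcategory inside $\CA_{i}^{\bot}$, read off the remaining semiorthogonality from the defining triangle, and rebuild the filtration by performing the two-term surgery on the slice $\langle\CA_{i},\CA_{i+1}\rangle$. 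The only step you flag as delicate---splicing the locally rebuilt two-step filtration back into the global one---is indeed routine: one simply replaces the single morphism $\CF_{i+1}\to\CF_{i-1}$ (factoring through $\CF_{i}$) by the composite $\CF_{i+1}\to\alpha_{i}\alpha_{i}^{!}(\CF_{i-1})\oplus_{something}$... more cleanly, since $\langle\CA_{i},\CA_{i+1}\rangle=\langle\CL_{\CA_{i}}(\CA_{i+1}),\CA_{i}\rangle$ as subcategories of $\Delta$ (both equal ${}^{\bot}\langle\CA_{1},\dots,\CA_{i-1}\rangle\cap\langle\CA_{i+2},\dots,\CA_{n}\rangle^{\bot}$), the object $\mathrm{Cone}(\CF_{i+1}\to\CF_{i-1})$ lies in this common subcategory and acquires the new two-step filtration, which you then insert between $\CF_{i+1}$ and $\CF_{i-1}$ via the octahedral axiom. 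Nothing is missing.
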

We will recall several useful lemmata about mutation functors.
\begin{lemma}\cite[Lemma 2.4]{2016Perry}
Let $\Delta=\langle \CA_{1},...,\CA_{n}\rangle$ be a semiorthogonal decomposition with admissible components. Assume for some $i$ the components $\CA_{i}$ and $\CA_{i+1}$ are completely orthogonal, i.e. Hom $(\CF,\,\CG)=$ Hom $(\CG,\,\CF)=\,0$ for all $\CF\in\CA_{i}$, $\CG\in\CA_{i+1}$. Then $\CL_{\CA_{i}}(\CG)=\CG$ for any $\CG\in\CA_{i+1}$, and $\CR_{\CA_{i+1}}(\CF)=\CF$ for any $\CF\in \CA_{i}$. In particular,
\begin{center}
    $\Delta=\langle \CA_{1},...,\CA_{i-1},\CA_{i+1},\CA_{i},\CA_{i+2},...,\CA_{n}\rangle$
\end{center}
is a semiorthogonal decomposition.
\end{lemma}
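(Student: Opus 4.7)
The plan is to read off both identities directly from the defining triangles of the mutation functors and then conclude the swap by invoking the preceding proposition. Writing $\alpha\colon \CA_{i}\hookrightarrow \Delta$ for the inclusion with right adjoint $\alpha^{!}$, the definition of $\CL_{\CA_{i}}$ gives a distinguished triangle
\begin{equation*}
\alpha\alpha^{!}(\CG)\to \CG\to \CL_{\CA_{i}}(\CG)
\end{equation*}
for any $\CG\in \CA_{i+1}$. I would use the adjunction identity $\Hom(\CF,\alpha^{!}(\CG))=\Hom(\alpha(\CF),\CG)$ for $\CF\in\CA_{i}$: by the complete orthogonality assumption the right-hand side vanishes for every $\CF$, so testing against $\CF=\alpha^{!}(\CG)$ forces $\alpha^{!}(\CG)=0$. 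The triangle then collapses to an isomorphism $\CG\xrightarrow{\sim}\CL_{\CA_{i}}(\CG)$.

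The argument for $\CR_{\CA_{i+1}}(\CF)=\CF$ is symmetric: writing $\beta\colon \CA_{i+1}\hookrightarrow \Delta$ with left adjoint $\beta^{*}$, the triangle $\CR_{\CA_{i+1}}(\CF)\to \CF\to \beta\beta^{*}(\CF)$ together with the vanishing $\Hom(\beta^{*}(\CF),\CG)=\Hom(\CF,\beta(\CG))=0$ for all $\CG\in\CA_{i+1}$ forces $\beta^{*}(\CF)=0$, hence $\CR_{\CA_{i+1}}(\CF)=\CF$.

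For the final statement, I would apply the mutation proposition recalled just before the lemma to the decomposition $\Delta=\langle \CA_{1},\ldots,\CA_{n}\rangle$ at position $i$, yielding
\begin{equation*}
\Delta=\langle \CA_{1},\ldots,\CA_{i-1},\CL_{\CA_{i}}(\CA_{i+1}),\CA_{i},\CA_{i+2},\ldots,\CA_{n}\rangle.
\end{equation*}
Substituting the identity $\CL_{\CA_{i}}(\CA_{i+1})=\CA_{i+1}$ just established gives the claimed swapped semiorthogonal decomposition. The only conceptual point to be careful about is that $\CL_{\CA_{i}}$ as stated maps $\Delta\to\Delta$ but its image on $\CA_{i+1}\subset {}^{\bot}\CA_{i}$ lands in $\CA_{i}^{\bot}$; complete orthogonality is exactly what makes these two subcategories contain $\CA_{i+1}$ with the mutation acting as the identity, so no further verification is needed. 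I expect no real obstacle here; the proof is essentially a bookkeeping exercise in adjunctions, and the main thing to get right is matching the direction of the adjoint to the correct orthogonality vanishing.
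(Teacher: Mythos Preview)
Your proof is correct and takes essentially the same approach as the paper's: both exploit the defining triangle of the mutation functor and the orthogonality hypothesis to see that the adjoint term vanishes, collapsing the triangle to the identity. The paper phrases this slightly more tersely---noting that the triangle $0\to \CF_{i+1}\to \CF_{i+1}$ already has the required form since complete orthogonality places $\CF_{i+1}$ in $\CA_i^{\bot}$---but this is the same content as your adjunction computation $\alpha^{!}(\CG)=0$, and your explicit invocation of the mutation proposition for the swapped decomposition fills in a step the paper leaves implicit.
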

\begin{proof}
For any $\CF_{i+1}\in\CA_{i+1}$, computing $\CL_{\CA_{i}}(\CF_{i+1})$ amounts to constructing a distinguished triangle
\begin{center}
    $\CF_{i}\rightarrow \CF_{i+1}\rightarrow \CG$ 
\end{center}
with $\CF_{i}\in\CA_{i}$ and $\CG\in\CA^{\bot}_{i}$, in which case $\CL_{\CA_{i}}(\CF_{i+1})=\CG$. Since $\text{Hom}(\CF_{i},\CF_{i+1})$ $=0$, $\CG \cong\CF_{i+1}$. The same argument applies to the corresponding statement on right mutation functors.
\end{proof}
\begin{lemma}\cite[Lemma 2.7]{Perry16alexander}
Let $\CA_{1},...,\CA_{n}$ is a semiorthogonal sequence of admissible subcategories of $\Delta$, then $\langle \CA_{1},...,\CA_{n}\rangle\hookrightarrow\Delta$ is also admissible and
    \begin{equation}
        \CL_{\langle \CA_{1},...,\CA_{n}\rangle}\,\cong\,\CL_{\CA_{1}}\circ\CL_{\CA_{2}}\circ\dots\circ\CL_{\CA_{n}};
    \end{equation}
    \begin{equation}
       \CR_{\langle \CA_{1},...,\CA_{n}\rangle}\,\cong\,\CR_{\CA_{n}}\circ\CR_{\CA_{n-1}}\circ\dots\circ\CR_{\CA_{1}}.
    \end{equation}
\end{lemma}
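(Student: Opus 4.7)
The plan is to proceed by induction on $n$, with the case $n=1$ being trivial. The inductive step reduces to the two-term case: given admissible subcategories $\CA_{1}, \CA_{2} \subset \Delta$ forming a semiorthogonal pair (so $\Hom(\CA_{2}, \CA_{1}) = 0$, equivalently $\CA_{1} \subset \CA_{2}^{\bot}$), I would show $\langle \CA_{1}, \CA_{2}\rangle$ is admissible and $\CL_{\langle \CA_{1},\CA_{2}\rangle} \cong \CL_{\CA_{1}} \circ \CL_{\CA_{2}}$. The general formula then follows by iteration using the associativity $\langle \CA_{1}, \ldots, \CA_{n}\rangle = \langle \CA_{1}, \langle \CA_{2}, \ldots, \CA_{n}\rangle\rangle$.

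For the two-term case, I would start with any $\CF \in \Delta$ and first apply $\CL_{\CA_{2}}$ to obtain the defining triangle $\alpha_{2}\alpha_{2}^{!}(\CF) \to \CF \to \CL_{\CA_{2}}(\CF)$ with $\CL_{\CA_{2}}(\CF) \in \CA_{2}^{\bot}$; then apply $\CL_{\CA_{1}}$ to obtain the triangle $\alpha_{1}\alpha_{1}^{!}(\CL_{\CA_{2}}(\CF)) \to \CL_{\CA_{2}}(\CF) \to \CL_{\CA_{1}}\CL_{\CA_{2}}(\CF)$, whose last term lies in $\CA_{1}^{\bot}$. The crucial point to verify is that the semiorthogonality $\CA_{1} \subset \CA_{2}^{\bot}$ forces $\alpha_{1}\alpha_{1}^{!}(\CL_{\CA_{2}}(\CF)) \in \CA_{2}^{\bot}$, so the two-out-of-three property along the second triangle puts $\CL_{\CA_{1}}\CL_{\CA_{2}}(\CF)$ in $\CA_{1}^{\bot} \cap \CA_{2}^{\bot} = \langle \CA_{1}, \CA_{2}\rangle^{\bot}$.

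Next, I would apply the octahedral axiom to the composition $\CF \to \CL_{\CA_{2}}(\CF) \to \CL_{\CA_{1}}\CL_{\CA_{2}}(\CF)$, which produces a distinguished triangle
\begin{equation*}
\alpha_{2}\alpha_{2}^{!}(\CF) \to B \to \alpha_{1}\alpha_{1}^{!}(\CL_{\CA_{2}}(\CF)),
\end{equation*}
where $B$ denotes the fiber of $\CF \to \CL_{\CA_{1}}\CL_{\CA_{2}}(\CF)$. Since both outer terms lie in $\langle \CA_{1}, \CA_{2}\rangle$, so does $B$. The resulting triangle $B \to \CF \to \CL_{\CA_{1}}\CL_{\CA_{2}}(\CF)$ then exhibits the semiorthogonal decomposition $\Delta = \langle \langle \CA_{1}, \CA_{2}\rangle^{\bot}, \langle \CA_{1}, \CA_{2}\rangle\rangle$, which both establishes right admissibility of $\langle \CA_{1}, \CA_{2}\rangle$ and identifies $\CL_{\langle \CA_{1},\CA_{2}\rangle}(\CF) \cong \CL_{\CA_{1}}\CL_{\CA_{2}}(\CF)$.

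The right mutation formula (2) and the left admissibility of $\langle \CA_{1}, \CA_{2}\rangle$ follow from an entirely dual argument using the unit morphism and left adjoints in place of the counit and right adjoints. The main obstacle is the careful bookkeeping in the octahedral construction together with the verification that $\CL_{\CA_{1}}\CL_{\CA_{2}}(\CF)$ lies in $\langle \CA_{1}, \CA_{2}\rangle^{\bot}$—this is precisely where the semiorthogonality hypothesis is indispensable, since without it the iterated mutation would fail to land in the full right orthogonal of the generated subcategory.
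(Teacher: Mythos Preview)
The paper does not supply its own proof of this lemma; it merely records the statement and cites \cite[Lemma 2.7]{Perry16alexander}. Your argument by induction, reducing to the two-term case and invoking the octahedral axiom to assemble the fibre $B\in\langle\CA_{1},\CA_{2}\rangle$ from $\alpha_{2}\alpha_{2}^{!}(\CF)$ and $\alpha_{1}\alpha_{1}^{!}(\CL_{\CA_{2}}(\CF))$, is the standard and correct route; the resulting triangle $B\to\CF\to\CL_{\CA_{1}}\CL_{\CA_{2}}(\CF)$ indeed witnesses right admissibility and the claimed factorisation, with the dual argument handling $\CR$ and left admissibility. Since there is nothing in the paper to compare against, your proposal stands on its own.
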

\hfill

We are mainly interested in the case when $\Delta$ is isomorphic to $D^{b}(X)$, where $X$ is a smooth projective variety and $D^{b}(X)$ is the derived category of coherent sheaves on $X$, which comes equipped with the Serre functor .

Let us first recall its definition:
\begin{definition}
Let $\Delta$ be a $\mathbb{C}$-linear triangulated category with finite dimensional Hom's. A \textit{Serre functor} for $\Delta$ is an equivalence $S:\Delta\longrightarrow\Delta$ together with a collection of binatural isomorphisms
\begin{center}
    $\eta_{A,B}:\,$Hom($\CF$, $\CG$)$\xrightarrow[]{\cong}$ Hom($\CG$, $S(\CF)$)$^{\vee}$ for $\CF$, $\CG$ $\in \Delta$.
\end{center}
\end{definition}
\begin{remark}
If $\Delta\,=D^{b}(X)$, where $X$ is a smooth projective $n$-dimensional variety, then by Serre duality, $D^{b}(X)$ admits a Serre functor given as 
\begin{center}
    $S(\CF)=\,\CF\otimes \omega_{X}[n]$
\end{center}
for $\CF\in D^{b}(X)$. Here $\omega_{X}$ is the canonical line bundle of $X$.
\end{remark}
\begin{proposition}\label{Serre functor}
Let $\Delta$ be a $\mathbb{C}$-linear triangulated category with finite dimensional Hom, which admits a Serre functor.
\begin{enumerate}
    \item If $\alpha:\CA\hookrightarrow\Delta$ is an admissible subcategory, then
    \begin{center}
        $S(^{\bot}\CA)$=$\CA^{\bot}$\,\,\,\,\,\, $S^{-1}$($\CA^{\bot}$)=$^{\bot}\CA$.
    \end{center}
    \item If $\Delta$ admits a semiorthogonal decomposition $\Delta=\,\langle \CA_{1},...,\CA_{n}\rangle$ with admissible components, then
    \begin{center}
    $\CL_{\langle \CA_{1},...,\CA_{n-1}\rangle}(\CA_{n})$= ($S(\CA_{n}$), $\CA_{1},...,\CA_{n-1}$)  
    \end{center}
    \begin{center}
    $\CR_{\langle \CA_{2},...,\CA_{n}\rangle}(\CA_{1})$= $(\CA_{2},...,\CA_{n},\,S^{-1}(\CA_{1})$)
    \end{center}
\end{enumerate}
\end{proposition}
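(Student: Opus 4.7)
My plan is to treat part (1) as a direct application of Serre duality and then deduce part (2) by recognising the mutated category as an orthogonal complement that part (1) identifies with $S(\CA_n)$ (respectively $S^{-1}(\CA_1)$).

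For part (1), I would start from the binatural isomorphism defining the Serre functor, $\Hom(\CF,\CG)\cong \Hom(\CG,S(\CF))^{\vee}$. If $\CF\in{}^{\bot}\CA$, then $\Hom(\CF,\CG)=0$ for every $\CG\in\CA$, so the dual of $\Hom(\CG,S(\CF))$ vanishes, which gives $S(\CF)\in \CA^{\bot}$. Hence $S({}^{\bot}\CA)\subseteq \CA^{\bot}$. The reverse inclusion follows either by reading the same calculation backwards with $\CG$ in the first slot (using $\Hom(\CG,\CF)\cong \Hom(\CF,S(\CG))^{\vee}$ together with the fact that $S$ is an equivalence so every object of $\CA^{\bot}$ is of the form $S(\CF)$ for some $\CF$), or simply by observing that $S$ is an equivalence and both ${}^{\bot}\CA$ and $\CA^{\bot}$ are admissible of the same ``size''. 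This yields $S({}^{\bot}\CA)=\CA^{\bot}$ and, by applying $S^{-1}$, $S^{-1}(\CA^{\bot})={}^{\bot}\CA$.

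For part (2), set $\CB=\langle \CA_1,\dots,\CA_{n-1}\rangle$; since its components are admissible, $\CB$ itself is admissible (using Lemma 2.7 from Perry's thesis), and one has the two semiorthogonal decompositions $\Delta=\langle \CB,\CA_n\rangle=\langle \CB^{\bot},\CB\rangle$. The first gives $\CA_n={}^{\bot}\CB$, so part (1) applied to $\CB$ yields $S(\CA_n)=S({}^{\bot}\CB)=\CB^{\bot}$. Substituting this into the second decomposition produces exactly $\Delta=\langle S(\CA_n),\CA_1,\dots,\CA_{n-1}\rangle$. To identify this decomposition with the output of the iterated left mutation, recall that $\CL_{\CB}$ lands in $\CB^{\bot}$ and restricts to an equivalence ${}^{\bot}\CB\xrightarrow{\sim}\CB^{\bot}$ (the Bondal--Kapranov lemma already cited in the paper); applied to $\CA_n={}^{\bot}\CB$, it gives an equivalence $\CA_n\to \CB^{\bot}=S(\CA_n)$ of subcategories, matching the claim.

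The statement about $\CR_{\langle \CA_2,\dots,\CA_n\rangle}(\CA_1)$ is entirely symmetric: set $\CB'=\langle \CA_2,\dots,\CA_n\rangle$, use that $\CA_1={\CB'}^{\bot}$, apply $S^{-1}$ via part (1) to get ${}^{\bot}\CB'=S^{-1}(\CA_1)$, and conclude by $\Delta=\langle \CB',{}^{\bot}\CB'\rangle=\langle \CA_2,\dots,\CA_n,S^{-1}(\CA_1)\rangle$ and the right-mutation analogue of the Bondal--Kapranov equivalence.

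The only delicate point is bookkeeping around the conventions: one has to be careful that $\CA_n={}^{\bot}\CB$ (rather than $\CB^{\bot}$) under the left-to-right convention $\Hom(\CA_i,\CA_j)=0$ for $i>j$ used in Definition \ref{definition 1}, so that part (1) produces $S(\CA_n)$ and not $S^{-1}(\CA_n)$ on the nose. Beyond this sign-of-shift issue the proof is essentially a one-line consequence of Serre duality, so I do not anticipate a genuine obstacle.
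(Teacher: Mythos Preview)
Your argument is correct and is precisely the standard proof: part (1) is an immediate consequence of the defining isomorphism of the Serre functor together with the fact that $S$ is an equivalence, and part (2) follows by setting $\CB=\langle\CA_1,\dots,\CA_{n-1}\rangle$, identifying $\CA_n={}^{\bot}\CB$, and combining part (1) with the Bondal--Kapranov equivalence $\CL_{\CB}\colon{}^{\bot}\CB\xrightarrow{\sim}\CB^{\bot}$. The paper itself does not give a proof but simply cites \cite[Proposition 3.6 and 3.7]{Bondal_1990}; your write-up is essentially a clean reconstruction of that reference, so there is nothing to add.
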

\begin{proof}
See \cite[Proposition 3.6 and 3.7]{Bondal_1990}.
\end{proof}

\section{Equivariant derived categories}
In this section, we recall some facts about equivariant derived categories. In addition, we always work on $\BC$-linear categories and all functors are also $\BC$-linear. Our main references are \cite{beckmann2020equivariant}, \cite{beckmannOberdieck2020equivariant}, \cite{2016Perry},\cite{0f6d937083584c2ba1ca157da0f5ba0a} and \cite{elagin2015equivariant}, where the discussions are more detailed.
\subsection{Categorical actions}
 For the reader's convenience, we recall the definition of \textit{categorical actions} and \textit{equivariant categories} following \cite{beckmann2020equivariant}.
\begin{definition}\cite[Definition 2.1]{beckmann2020equivariant}
Let $G$ be a finite group and $\CD$ be a category.
A \textit{categorical action} $(\rho,\theta)$ of $G$ on $\CD$ consists of 
\begin{enumerate}
    \item for every $g\in G$, an autoequivalence $\rho_{g}\colon\CD \rightarrow \CD$;
    \item for every $g,h\in G$, an isomorphism of functors $\theta_{g,h}\colon\rho_{g}\circ \rho_{h} \rightarrow \rho_{gh}$ such that the following diagram 

\begin{equation}
\begin{tikzcd}
\rho_{g}\rho_{h}\rho_{k} \arrow[rr, "{\rho_{g}\circ\theta_{h,k}}"] \arrow[dd, "{\theta_{g,h}\circ\rho_{k}}"] &  & \rho_{g}\rho_{hk} \arrow[dd, "{\theta_{g,hk}}"] \\
                                                                                                   &  &                                                 \\
\rho_{gh}\rho_{k} \arrow[rr, "{\theta_{gh,k}}"]                                                    &  & \rho_{ghk}                                     
\end{tikzcd}  
\end{equation}
commutes for all $g,h,k\in G$. 
\end{enumerate}

We say a categorical action $(\rho,\theta)$ of $G$ on $\CD$ is \textit{trivial}, if for each $g\in G$ there exists a natural isomorphism $\tau_{g}:id \rightarrow \rho_{g}$, such that 
\begin{center}
    $\theta_{g,h}^{-1}\circ\tau_{gh}=\rho_{h}(\tau_{g})\circ\tau_{h}$
\end{center}
for all $g,h\in G$.
\end{definition}

\begin{definition}\cite[Definition 3.1]{beckmann2020equivariant}\label{definition of equivariant category}
Let $(\rho,\theta)$ be a categorical action of a finite group $G$ on an additive $\BC$-linear category $\CD$. The \textit{equivariant category} $\CD_{G}$ is defined as follows:
\begin{enumerate}
    \item Objects of $\CD_{G}$ are pairs $(E,\phi)$, where $E$ is an object in $\CD$ and the linearisation $\phi\coloneqq(\phi_{g})_{g\in G},\ \phi_{g}\colon E\mapsto \rho_{g}(E)$ is a family of isomorphisms such that the following diagram 
\begin{center}
    \begin{equation}
\begin{tikzcd}
E \arrow[rr, "\rho_{g}"] \arrow[rrrrrr, "\phi_{gh}", bend right] &  & \rho_{g}(E) \arrow[rr, "\rho_{g}(\phi_{h})"] &  & \rho_{g}\rho_{h}(E) \arrow[rr, "{\theta_{g,h}}(E)"] &  & \rho_{gh}(E)
\end{tikzcd}
    \end{equation}
\end{center}
commutes.

\item A morphism from $(E,\phi)$ to $(E',\phi')$ is a morphism $f\colon\,E\rightarrow E'$ in $\CD$ which commutes with linearizations, i.e.\ the following diagram
    \begin{equation}
\begin{tikzcd}
E \arrow[rr, "f"] \arrow[dd, "\phi_{g}"] &  & E^{\prime} \arrow[dd, "\phi_{g}^{\prime}"] \\
                                         &  &                            \\
\rho_{g}E \arrow[rr, "\rho_{g}(f)"]               &  & \rho_{g}E^{\prime}                      
\end{tikzcd}
    \end{equation}

commutes for every $g\in G$.
\end{enumerate}
\indent Note that for any objects $(E,\phi)$ and $(E',\phi')$ in $\CD_{G}$, there is an induced action of $G$ on Hom$_{\CD}(E,E')$ via
\begin{center}
 $g_{.}f=(\phi'_{g})^{-1}\circ \rho_{g}(f)\circ \phi_{g}$   
\end{center}
for every $g\in G$. Thus we have
\begin{center}
Hom$_{\CD_{G}}((E,\phi),(E',\phi'))$= Hom$_{\CD}(E,E')^{G}$. 
\end{center}

\end{definition}

\begin{remark}\cite[page 8]{beckmannOberdieck2020equivariant}\label{Remark about restrication functor}
The equivariant category $D_{G}$ is naturally equipped with a forgetful functor
\begin{center}
    $\text{Res}\colon\,\CD_{G} \rightarrow \CD,\,\,(E,\phi)\mapsto E$
\end{center}
and a linearization functor 
\begin{center}
    $\text{Ind}\colon\,\CD\rightarrow \CD_{G},\,\, E\mapsto(\bigoplus_{g\in G}\rho_{g}E,\phi)$.
\end{center}
Here the linearization $\phi$ is given by considering $\theta_{h,h^{-1}g}^{-1}\colon\,\rho_{g}E\mapsto \rho_{h}\rho_{h^{-1}g}E$ and then taking the direct sum over all $g$, \textit{i.e.}\ for each $h\in G$
\begin{center}
  $\phi_{h}\,=\bigoplus_{g}\theta^{-1}_{h,h^{-1}g}:\,\bigoplus_{g}\rho_{g}E\mapsto \rho_{h}(\bigoplus_{g}\rho_{h^{-1}g}E)=\,\rho_{h}(\bigoplus_{g}\rho_{g}E).$
\end{center}
By \cite[Lemma 3.8]{elagin2015equivariant}, the linearization functor $\text{Ind}$ is left and right adjoint to the forgetful functor $\text{Res}$. We write $E$ for $(E,\phi)$ if the linearization is of this form. 
\end{remark}

\subsection{Triangulated equivariant categories}
Take a finite group $G$ acting on a triangulated category $\Delta$ by exact autoequivalences. It is natural to ask: when is $\Delta_{G}$ a triangulated category?
The category $\Delta_{G}$ is triangulated only in certain situations. The following two circumstances are of particular interest to us:
\begin{enumerate}
    \item $\Delta=\,D^{b}(X)$ for a smooth projective variety $X$ and $G$ acts via automorphisms of $X$;
    \item $\Delta$ is a semiorthogonal component of $D^{b}(X)$ and $G$ acts via automorphisms of $X$ that preserve $\Delta$. 
\end{enumerate}
\begin{theorem}\cite[Theorem 6.3]{2011Elagin}\cite[Proposition 3.10]{elagin2015equivariant}\label{Equivariant Derived category Theorem}
Let $X$ be a quasi-projective variety with an action of a finite group $G$. Let $D^{b}(X)=\,\langle
\CA_{1},...,\CA_{n}\rangle$ be a semiorthogonal decomposition preserved by $G$, i.e.\ each $\CA_{i}$ is preserved by the action of $G$. Then there is a semiorthogonal decomposition
\begin{equation}
    D^{b}_{G}(X)=\,\langle \CA_{1,G},...,\CA_{n,G}\rangle
\end{equation}
of the equivariant category $D^{b}_{G}(X)$, where $\CA_{i,G}$ is the equivariant category of $\CA_{i}$.
\end{theorem}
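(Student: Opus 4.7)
My plan is to check directly the two axioms of a semiorthogonal decomposition from Definition \ref{definition 1} for the sequence $\langle \CA_{1,G},\dots,\CA_{n,G}\rangle$ inside $D^{b}_{G}(X)$. Since each $\CA_{i}$ is a $G$-invariant triangulated subcategory, the categorical $G$-action on $D^{b}(X)$ restricts to one on $\CA_{i}$, so the equivariant category $\CA_{i,G}$ makes sense and comes with a natural fully faithful functor $\CA_{i,G}\hookrightarrow D^{b}_{G}(X)$ sending $(F,\phi)$ to $(F,\phi)$ viewed in the ambient category; full faithfulness follows from the equality $\Hom_{\CD_{G}}((F,\phi),(F',\phi'))=\Hom_{\CD}(F,F')^{G}$ recalled in Definition \ref{definition of equivariant category}, applied both inside $\CA_{i,G}$ and $D^{b}_{G}(X)$.

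For semiorthogonality, I take $i>j$, $(F_{i},\phi_{i})\in\CA_{i,G}$ and $(F_{j},\phi_{j})\in\CA_{j,G}$. Using the same identification,
\begin{equation*}
\Hom_{D^{b}_{G}(X)}((F_{i},\phi_{i}),(F_{j},\phi_{j}))=\Hom_{D^{b}(X)}(F_{i},F_{j})^{G},
\end{equation*}
and the underlying Hom vanishes because $\langle\CA_{1},\dots,\CA_{n}\rangle$ is a semiorthogonal decomposition of $D^{b}(X)$. This step is essentially formal.

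The substance of the argument is the generation axiom: for every $(E,\phi)\in D^{b}_{G}(X)$ I need to produce an equivariant filtration whose cones lie in the $\CA_{i,G}$. I would start from the filtration
\begin{equation*}
0=E_{n}\to E_{n-1}\to\cdots\to E_{1}\to E_{0}=E
\end{equation*}
provided by the non-equivariant semiorthogonal decomposition, where the morphisms are the counits of the right-adjoint projection functors $p_{i}\colon D^{b}(X)\to\langle\CA_{i},\dots,\CA_{n}\rangle$. The key point is to upgrade each $E_{i}$ to an equivariant object and each morphism $E_{i}\to E_{i-1}$ to a morphism in $D^{b}_{G}(X)$, so that the resulting cones $\mathrm{Cone}(E_{i}\to E_{i-1})\in\CA_{i}$ acquire canonical linearizations. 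To do this I would verify that the projection functors $p_{i}$ commute (up to canonical isomorphism) with every $\rho_{g}$: concretely, because $\CA_{i}$ and $\langle\CA_{i+1},\dots,\CA_{n}\rangle$ are both $G$-invariant, the triangle defining $p_{i}$ is preserved by $\rho_{g}$, hence uniqueness of the semiorthogonal projection forces an isomorphism $\rho_{g}\circ p_{i}\cong p_{i}\circ\rho_{g}$ satisfying the cocycle condition. Transporting $\phi$ through these isomorphisms gives linearizations on each $E_{i}$ for which the connecting maps are $G$-equivariant, and the induced linearizations on the cones realise them as objects of $\CA_{i,G}$.

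The main obstacle I anticipate is precisely this last step: one has to track the cocycle condition $\theta_{g,h}$ through the projection functors and check that the induced linearizations on the $E_{i}$, and hence on the cones, are compatible, so that the resulting filtration is a bona fide diagram in $D^{b}_{G}(X)$. In other words, what looks like a purely formal ``take $G$-invariants everywhere'' argument requires verifying that the natural isomorphisms $\rho_{g}p_{i}\cong p_{i}\rho_{g}$ can be chosen coherently in $g$. Once this coherence is established, axiom (2) of Definition \ref{definition 1} for $\langle\CA_{1,G},\dots,\CA_{n,G}\rangle$ follows, completing the proof; for a fully detailed verification I would defer to \cite[Theorem 6.3]{2011Elagin} and \cite[Proposition 3.10]{elagin2015equivariant}.
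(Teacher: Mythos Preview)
The paper does not give its own proof of this theorem; it is stated with citations to \cite[Theorem 6.3]{2011Elagin} and \cite[Proposition 3.10]{elagin2015equivariant} and used as a black box. So there is no in-paper argument to compare your proposal against.

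That said, your sketch is the standard approach and is essentially correct in outline: semiorthogonality is immediate from $\Hom_{G}=\Hom^{G}$, and generation comes from lifting the filtration by making the projection functors $G$-equivariant. You have also correctly identified the only genuinely non-formal point, namely the coherence of the isomorphisms $\rho_{g}\circ p_{i}\cong p_{i}\circ\rho_{g}$ with respect to the associativity data $\theta_{g,h}$. One remark: your argument implicitly assumes that the cone on a $G$-equivariant morphism in $D^{b}_{G}(X)$ inherits a linearization making the triangle distinguished in $D^{b}_{G}(X)$; this is where one actually uses that $D^{b}_{G}(X)$ is triangulated (e.g.\ via the identification $D^{b}_{G}(X)\cong D^{b}(\mathrm{Coh}_{G}(X))$), rather than just an additive category. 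Elagin's references handle exactly these coherence and triangulated-structure issues, so your deferral to them at the end is appropriate.
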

For a semiorthogonal component $\CA$ of $D^{b}(X)$ preserved by $G$, the following proposition gives us a completely orthogonal decomposition of $\CA$ as long as $G$ induces a trivial action on $\CA$.
\begin{proposition}\cite{2016Perry}\label{KP16}
Let $\CA$ be a triangulated category with a trivial action of a finite group $G$. If the equivariant category $\CA_{G}$ is also triangulated, then there is a completely orthogonal decomposition
    \begin{equation}
        \CA_{G}=\langle \CA_{G}\otimes V_{0},...,\CA_{G}\otimes V_{n}\rangle,
    \end{equation}
where $V_{0},...,V_{n}$ are all irreducible representations of the finite group $G$.
\end{proposition}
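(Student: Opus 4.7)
The plan is to exploit the fact that under a trivial action, the data of a linearization $\phi=(\phi_{g})_{g\in G}$ on an object $E\in\CA$ reduces to a group homomorphism $\phi\colon G\to\Aut_{\CA}(E)$, i.e.\ an honest $G$-representation structure on the object $E$. The decomposition will then arise by decomposing representations into isotypic components using the central idempotents of $\BC[G]$.

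First, for each irreducible representation $V_{i}$ of $G$, I would define a functor $T_{i}\colon \CA\to \CA_{G}$ by $E\mapsto (E\otimes V_{i},\phi^{V_{i}})$, where $E\otimes V_{i}$ denotes the direct sum $E^{\oplus\dim V_{i}}$ and $\phi^{V_{i}}_{g}$ acts by the matrix representation of $g$ on $V_{i}$ tensored with $\id_{E}$. Denote the essential image of $T_{i}$ by $\CA_{G}\otimes V_{i}$; these are the candidate components appearing in the statement.

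Next, I would verify complete orthogonality using the identification $\Hom_{\CA_{G}}((F,\phi),(F',\phi'))=\Hom_{\CA}(F,F')^{G}$ from Definition \ref{definition of equivariant category}. For $E,E'\in\CA$ one has
\begin{equation*}
\Hom_{\CA}(E\otimes V_{i},\, E'\otimes V_{j})\;\cong\; \Hom_{\CA}(E,E')\otimes_{\BC}\Hom_{\BC}(V_{i},V_{j}),
\end{equation*}
where $G$ acts trivially on the first tensor factor (because the action on $\CA$ is trivial) and by the canonical representation on the second. Taking $G$-invariants and invoking Schur's lemma, the right-hand side vanishes unless $V_{i}\cong V_{j}$, which is precisely complete orthogonality between distinct components.

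Finally, to see that the components $\CA_{G}\otimes V_{i}$ generate all of $\CA_{G}$, I would use the central idempotents $e_{i}=\tfrac{\dim V_{i}}{|G|}\sum_{g\in G}\overline{\chi_{i}(g)}\,g\in\BC[G]$. Given $(F,\phi)\in \CA_{G}$, the linearization turns each $\phi(e_{i})$ into an idempotent in $\End_{\CA}(F)$, and these are mutually orthogonal with $\sum_{i}\phi(e_{i})=\id_{F}$. Since $\CA_{G}$ is triangulated by hypothesis, idempotents split, producing a decomposition $(F,\phi)\cong\bigoplus_{i}(F_{i},\phi_{i})$ with $(F_{i},\phi_{i})\in \CA_{G}\otimes V_{i}$. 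This splitting of idempotents is the main (and essentially only) subtle point: without the triangulated hypothesis one could only build the isotypic projections formally, whereas here the Balmer–Neeman idempotent completeness of triangulated categories guarantees that the $(F_{i},\phi_{i})$ exist as genuine objects of $\CA_{G}$.
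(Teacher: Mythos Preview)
The paper does not actually prove this proposition; it is quoted from \cite{2016Perry} without argument. So there is no in-paper proof to compare yours against, and I will assess your proposal on its own terms.

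Your construction of the components $\CA_{G}\otimes V_{i}$ and the orthogonality computation via Schur's lemma are both correct and are the natural approach. The gap is in the generation step. You assert that ``since $\CA_{G}$ is triangulated by hypothesis, idempotents split,'' invoking a purported ``Balmer--Neeman idempotent completeness of triangulated categories.'' No such result exists: triangulated categories are \emph{not} idempotent complete in general. The Balmer--Schlichting theorem says only that the idempotent completion of a triangulated category carries an induced triangulated structure, and Neeman's splitting criterion requires the presence of countable coproducts. Neither hypothesis is available here. Indeed, one can manufacture counterexamples to the statement as literally written: if $\CA$ is triangulated but not Karoubian and $e\in\End_{\CA}(X)$ is a non-split idempotent, then for $G=\BZ/2\BZ$ acting trivially the involution $2e-\id_{X}$ equips $X$ with a linearization that cannot be decomposed into $\chi_{0}$- and $\chi_{1}$-isotypic summands, while $\CA_{G}$ is still triangulated in the natural way.

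In the source \cite{2016Perry}, and throughout the present paper, the categories involved are admissible subcategories of $D^{b}(X)$ for smooth projective $X$, hence automatically Karoubian; with that extra hypothesis your argument goes through verbatim. You should either make idempotent completeness an explicit assumption, or note that it holds in the intended applications and restrict the claim accordingly.
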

\begin{example}
Take a point $*$ endowed with a trivial finite group action of $G$, then the equivariant derived category $D^{b}_{G}(*)$ is isomorphic to the derived category of representation $D^{b}(\text{Rep}_{\BC}(G))$. So it has a complete decomposition
\begin{equation*}
    \langle V_{0},V_{1},...,V_{n} \rangle,
\end{equation*}
where $V_{0},...,V_{n}$ are all the irreducible representation of $G$.
\end{example}
\begin{remark}\label{remark 3.6}
In particular, for a finite abelian group $G$, its irreducible representations one-to-one corresponds to characters. We also use $\CA_{G}\otimes \chi_{i}$ to denote $\CA_{G}\otimes V_{i}$, where $\chi_{i}$ is the character corresponding to the irreducible representation $V_{i}$. 
\end{remark}
\subsection{Equivariant sheaves and derived categories} In this section, we collect some facts from \cite[Section 2.2]{article} about the equivariant derived categories and functors that we will need later. 

Let a finite group $G$ act on a smooth projective variety $M$. We use $\text{Coh}_{G}(M)$ and $D^{b}_{G}(M)$ to denote the abelian category of equivariant coherent sheaves and the equivariant derived category. By  \cite[Theorem 9.6]{elagin2015equivariant} we have $D^{b}_{G}(M)\cong D^{b}(\text{Coh}_{G}(M))$.

For a non-trivial character $\chi$ of $G$, we get an autoequivalence 
\begin{center}
  $-\otimes \chi\colon \text{Coh}_{G}$ $(M)\rightarrow \text{Coh}_{G}(M)$.  
\end{center}

Let $G$ act on another smooth projective variety $N$ and $f\colon M\rightarrow N$ be a $G$-equivariant morphism. For our purposes, we can assume $f$ is projective. Then we have a natural pullback functor $f^{*}_{G}\colon \text{Coh}_{G}(N)\rightarrow \text{Coh}_{G}(M)$ and a push-forward functor $f_{*}^{G}\colon \text{Coh}_{G}(M)\rightarrow \text{Coh}_{G}(N)$. We also have the following two isomorphisms of functors, which play important roles later.
\begin{proposition}\label{technical and important proposition}
Let $M$ and $N$ be smooth projective varieties admitting actions of a finite group $G$. If $f\colon M\rightarrow N$ is a $G$-equivariant morphism and $\chi$ a character of $G$, then we have isomorphisms
\begin{center}
$f_{*}^{G}(-\otimes \chi)\cong f_{*}^{G}(-)\otimes \chi$, $f^{*}_{G}(-\otimes \chi)\cong f^{*}_{G}(-)\otimes \chi$.    
\end{center}
\end{proposition}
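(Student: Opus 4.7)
The plan is a direct unwinding of definitions. Throughout, I rely on the fact (recalled in and around Remark \ref{remark 3.6}) that tensoring an equivariant sheaf $(E,\phi)$ with a character $\chi$ does not alter the underlying coherent sheaf $E$; it merely rescales the linearization component-wise, so that $(E,\phi)\otimes \chi = (E,\chi\phi)$ with $(\chi\phi)_g = \chi(g)\cdot \phi_g$. The cocycle compatibility of $\chi\phi$ with the coherence data $\theta_{g,h}$ follows from the multiplicativity $\chi(gh)=\chi(g)\chi(h)$ of the character.

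First I would treat the pushforward. Writing $f_*^G(E,\phi) = (f_*E,\psi)$, the linearization $\psi_g$ is obtained by applying the ordinary pushforward $f_*$ to $\phi_g\colon E \to \rho_g^M(E)$ and post-composing with the natural isomorphism $f_*\rho_g^M E \cong \rho_g^N f_*E$ provided by the $G$-equivariance of $f$. Applying the same recipe to $(E,\chi\phi)$ produces a linearization whose $g$-component is $f_*(\chi(g)\phi_g)$ followed by the same base-change isomorphism. Since $f_*$ is a $\mathbb{C}$-linear functor and $\chi(g)$ is a scalar, this equals $\chi(g)\cdot \psi_g$. Hence
\begin{equation*}
f_*^G\bigl((E,\phi)\otimes \chi\bigr) = (f_*E,\chi\psi) = (f_*E,\psi)\otimes \chi = f_*^G(E,\phi)\otimes \chi,
\end{equation*}
and this identification is the identity on underlying sheaves, so it is manifestly natural in $(E,\phi)$. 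The argument for the pullback is entirely analogous: one uses the base-change isomorphism $f^*\rho_g^N \cong \rho_g^M f^*$ coming from equivariance, and again the $\mathbb{C}$-linearity of $f^*$ lets one pull the scalar $\chi(g)$ through to obtain $f_G^*(-\otimes \chi) \cong f_G^*(-)\otimes \chi$.

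The only conceivable subtlety is checking compatibility with the coherence data $\theta_{g,h}$ of the categorical action on both sides, but this is automatic: twisting by $\chi$ rescales each $\phi_g$ by $\chi(g)$, so the cocycle relation $\phi_{gh}=\theta_{g,h}(E)\circ\rho_g(\phi_h)\circ\phi_g$ is multiplied by $\chi(g)\chi(h)=\chi(gh)$ on both sides and therefore still holds. There is no genuine obstacle here; the proof is essentially a bookkeeping exercise, and the key input is simply that $f_*$ and $f^*$ are $\mathbb{C}$-linear, so they commute with scalar multiplication by $\chi(g)$.
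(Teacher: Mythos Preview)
Your argument is correct: tensoring by a character only rescales each linearization isomorphism by the scalar $\chi(g)$, and the $\mathbb{C}$-linearity of $f_*$ and $f^*$ lets that scalar pass through unchanged, so the resulting equivariant sheaf is exactly $(-)\otimes\chi$ applied after the functor. The naturality and cocycle checks are as you describe.

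The paper itself does not supply a proof of this proposition; it is stated as one of the facts collected from \cite[Section 2.2]{article}. So there is no ``paper's own proof'' to compare against, and your direct unwinding of the definitions is an appropriate way to fill in the omitted justification.
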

Furthermore, all functors mentioned above induce corresponding derived functors on the level of derived categories, we use $\textbf{L}f^{*}_{G}$ resp.\,$\textbf{R}f_{*}^{G}$ to denote the derived pull-back resp.\,derived push-forward $G$-equivariant functors. For the functor $-\otimes \chi$, we do not have to change notation for its derived counterpart, since it is exact.

For any two objects $\CF,\CG\in D^{b}_{G}(M)$, we denote the graded Hom-space by 
\begin{center}
$\text{Hom}_{G}^{*}(\CF,\CG)\coloneqq \bigoplus\limits_{i\in \BZ} \text{Ext}_{G}^{i}(\CF,\CG)$
and $\text{Ext}_{G}^{i}(\CF,\CG)\coloneqq \text{Hom}_{D^{b}_{G}(M)}(\CF,\CG[i])$.   
\end{center}
In addition, we write $\CF\coloneqq \text{Res}\,\CF$ for $\CF\in D^{b}_{G}(M)$ and $\text{Hom}^{*}(\CF,\CG)$ for $ \text{Hom}^{*}_{D^{b}(M)}$ $(\text{Res }\CF,\text{Res }\CG)$. By Remark \ref{Remark about restrication functor}, we know that
\begin{center}\label{Important isomorphism}
    $\text{Hom}_{G}^{*}(\CF,\CG)\cong \text{Hom}^{*}(\CF,\CG)^{G}$.
\end{center}

If $G$ acts trivially on $M$, then a $G$-equivariant sheaf $\CF$ is simply a sheaf of $G$-representations. Sectionwise, taking the $G$-invariants of $\CF(U)$ for every open subscheme $U\subset X$ yields a functor $(-)^{G}\colon \text{Coh}_{G}(M)\rightarrow \text{Coh}(M)$. Moreover, we also have the following isomorphisms between functors in this case. 
\begin{proposition}\label{important proposition 2}
If $f\colon M\rightarrow N$ is a morphism between varities and $G$ acts on $M$, $N$ and $f$ trivially, then we have the following isomophisms between functors
\begin{center}

 $(-)^{G}\circ f_{*}^{G}\cong f_{*}\circ(-)^{G}$ and $(-)^{G}\circ f^{*}_{G}\cong f^{*}\circ(-)^{G}$.   
\end{center}
\end{proposition}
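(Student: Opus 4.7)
The plan is to check both natural isomorphisms section-wise on $\text{Coh}_G$, and then propagate them to the derived setting using exactness of $(-)^G$.

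First, for the pushforward identity, I would simply unwind definitions. For an open $V \subset N$ and $\CF \in \text{Coh}_G(M)$, both sheaves $((-)^G \circ f_*^G)(\CF)$ and $(f_* \circ (-)^G)(\CF)$ have the same sections $\CF(f^{-1}V)^G$: the $G$-action on $f_*^G\CF(V)$ is by definition inherited from $\CF$, and since $G$ acts trivially on $N$ the restriction maps $V \supset V' \mapsto f^{-1}V \supset f^{-1}V'$ are $G$-equivariant, so that taking invariants commutes with restriction. Naturality in $\CF$ is then immediate from the functoriality of all ingredients.

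For the pullback identity, I would use the description $f^*_G \CF = f^{-1}\CF \otimes_{f^{-1}\CO_N} \CO_M$ endowed with the $G$-action coming from $\CF$. Because $f$, $\CO_M$ and $\CO_N$ carry trivial $G$-actions, the $G$-action is confined to the $f^{-1}\CF$ factor. The topological inverse image $f^{-1}$ commutes with $(-)^G$ stalkwise. I would then verify the algebraic statement that, if $G$ acts trivially on a commutative ring $R$ and on an $R$-algebra $A$, then $(A \otimes_R N)^G \cong A \otimes_R N^G$ for any $R[G]$-module $N$; over $\BC$ with $G$ finite this is immediate from the decomposition of $N$ into isotypic components (Maschke). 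Combining the two reductions yields $(f^*_G \CF)^G \cong f^*(\CF^G)$.

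To extend these identities to the derived setting, I would invoke exactness of $(-)^G$ over $\BC$ for finite $G$, which implies that $(-)^G$ commutes with the formation of derived functors. Applying the sheaf-level isomorphisms term-by-term to a $G$-equivariant injective resolution (for the pushforward) or a $G$-equivariant locally free resolution (for the pullback) yields the derived analogues.

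The only step requiring genuine care is the tensor-product identity in the pullback case, together with the auxiliary fact that a $G$-equivariant flat or locally free resolution remains flat or locally free after taking $G$-invariants. Both of these reduce once more to the exactness of $(-)^G$ and to the triviality of the $G$-action on the structure sheaves, so no serious obstacle arises.
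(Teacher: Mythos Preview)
Your proposal is correct. The paper does not actually supply a proof of this proposition: it is stated as a fact collected from \cite[Section 2.2]{article} (Krug), with no argument given. So there is nothing to compare against, and your write-up simply fills in the details the paper omits.

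Your argument is the standard one and is sound. The pushforward case is genuinely trivial. For the pullback case, your key observation that $(A\otimes_R N)^G\cong A\otimes_R N^G$ when $G$ acts trivially on $R$ and $A$ is exactly what is needed, and follows (as you say) from the isotypic decomposition $N=\bigoplus_\chi N_\chi$ available over $\BC$; since $G$ acts trivially on $A$, tensoring with $A$ preserves this decomposition. The passage to derived functors via the exactness of $(-)^G$ in characteristic zero is also correct. One small remark: in the paper the proposition is only ever applied at the level of abelian categories (and then combined with exactness of $(-)^G$ separately in the proof of Proposition~\ref{Theorem about a}), so strictly speaking the derived extension you sketch is not even required to match the paper's usage.
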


\section{Derived Mckay correspondence and G-Hilbert schemes}
In \cite{0f6d937083584c2ba1ca157da0f5ba0a}, Bridgeland, King and Reid used derived categories to extend the classical McKay correspondence. In this section, we will review the main theorem in that paper. In the meantime, we also need to recall some basic facts about $G-$Hilbert schemes, which are key objects in both \cite{0f6d937083584c2ba1ca157da0f5ba0a} and this paper.

The classical McKay correspondence originated from an observation by McKay in \cite{mckay37graphs}. He found a bijection between non-trivial irreducible representations of a finite group $G\subset \text{SL}(2,\BC)$ and rational curves in the exceptional locus of the minimal resolution $Y\rightarrow \BC^{2}/G$ of the quotient singularities.

Later, in \cite{ASENS_1983_4_16_3_409_0}, Gonzalez-Springer and Verdier established an isomorphism between the Grothendieck group $K^{G}(\BC^{2})$ of $G$-equivariant coherent sheaves on $\BC^{2}$ and the Grothendieck group $K(Y)$ of the minimal resolution $Y$ of $\BC^{2}/G$. Since bounded derived categories can be thought as  \textit{``categorifications"} of Grothendieck groups, it is natural to expect that the isomorphism
\begin{equation*}
    K^{G}(\BC^{2})\cong K(Y)
\end{equation*}
can be lifted to an equivalence of derived categories. This has already been proved by Kapranov and Vasserot in \cite{kapranov1998kleinian}.
\begin{theorem}\cite[Section 1.4]{kapranov1998kleinian}
Let $M$ be a surface equipped with a holomorphic symplectic form $\omega$ and suppose that there is a $G$ finite action on $M$ preserving $\omega$. Then 
\begin{equation*}
    D^{b}(Y)\cong D^{b}_{G}(M),
\end{equation*}
where $Y\rightarrow M/G$ is the minimal resolution of $M /G$ and $D^{b}_{G}(M)$ is the derived category of $G$-equivariant coherent sheaves on $M$.
\end{theorem}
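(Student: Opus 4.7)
The plan is to apply the Bridgeland--King--Reid (BKR) criterion, specialized to the two-dimensional case. The first step is to realize $Y$ as the $G$-Hilbert scheme $G\text{-Hilb}_{\BC}(M)$. Since $G$ preserves the holomorphic symplectic form $\omega$, the isotropy of any fixed point $p\in M$ acts on $T_{p}M$ through $\mathrm{Sp}(T_{p}M)$, which in dimension two coincides with $\mathrm{SL}(T_{p}M)$. Consequently $M/G$ has only Du Val (ADE) quotient singularities, and by the classical computation of Ito--Nakamura the $G$-Hilbert scheme agrees with the minimal resolution $\pi\colon Y\to M/G$.

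The second step is to form the universal subscheme $\CZ\subset Y\times M$ with its projections $q\colon\CZ\to Y$ and $p\colon\CZ\to M$, and to consider the Fourier--Mukai functor $\Phi \coloneqq \mathbf{R}q_{*}\circ p^{*}\colon D^{b}(Y)\to D^{b}_{G}(M)$. According to BKR, $\Phi$ is an equivalence provided: (i) $\pi$ is a crepant resolution, and (ii) $\dim(Y\times_{M/G}Y)\leq \dim M + 1 = 3$.

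Condition (i) is automatic in the symplectic setting: being $G$-invariant, $\omega$ descends to the smooth locus of $M/G$, and since the minimal resolution of ADE singularities is known to be crepant, the resulting $2$-form extends holomorphically to $Y$ and trivializes $\omega_{Y}$. For condition (ii), $\pi$ is an isomorphism away from the finitely many ADE singularities, and each exceptional fiber is a $1$-dimensional chain of $\BP^{1}$'s; hence the fibers of $Y\times_{M/G}Y\to M/G$ are zero-dimensional over the smooth locus (contributing $2$ to the total dimension) and two-dimensional products of curves over the singular points (contributing $2+0=2$). Therefore $\dim(Y\times_{M/G}Y)=2\leq 3$, and BKR produces the equivalence.

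The main obstacle is conceptual rather than computational: one must identify $Y$ with $G\text{-Hilb}_{\BC}(M)$ and invoke the crepancy of the minimal resolution in the ADE case. Once these structural inputs are available, the dimension bound is elementary and BKR supplies the Fourier--Mukai equivalence immediately. It is worth noting that the original proof of Kapranov--Vasserot predates BKR and proceeds via an explicit analysis of Hall algebras attached to the ADE quiver; the approach sketched here is essentially the retrospective ``derived McKay'' argument that the remainder of the paper will adapt to the four-dimensional cubic setting.
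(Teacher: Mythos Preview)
The paper does not give its own proof of this theorem; it is stated as a cited result of Kapranov--Vasserot and serves only as historical motivation for the BKR machinery used later. So there is no ``paper's own proof'' to compare against.

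That said, your proposal is a correct retrospective argument, and you already correctly flag that it is not the original one. Two small comments. First, in the version of BKR quoted in the paper (Theorem~\ref{BKR 1}), crepancy of $\tau\colon Y\to X$ is a \emph{conclusion}, not a hypothesis: the only input needed is the dimension bound $\dim(Y\times_{X}Y)\leq n+1$, together with the standing assumption that stabilisers act through $\mathrm{SL}$. Your verification of (i) is therefore unnecessary, though not wrong. Second, the paper itself makes exactly the observation underlying your step (ii): the remark following Theorem~\ref{BKR 1} notes that when $\dim M\leq 3$ the fibre-product bound is automatic because the exceptional locus of $Y\to X$ has dimension at most $1$. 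So your dimension count is precisely the argument the paper would have given, had it bothered to spell this case out. The identification of $Y$ with $G\text{-Hilb}_{\BC}(M)$ via Ito--Nakamura is the one genuinely external input, and you cite it appropriately.
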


It is natural to suspect an extension of the derived Mckay correspondence to higher dimensional cases. However, a minimal resolution need not exist, even if $M$ is of dimension three. A natural replacement would be a crepant resolution, which is always minimal in dimension two. In \cite{10.3792/pjaa.72.135} and \cite{ito1999hilbert}, Ito and Nakumura introduced the $G$-Hilbert scheme $G\text{-Hilb}_{\BC}(M)$ as a candidate for a crepant resolution of $M/G$ and proved that if $\textup{dim}\,M =2$, then $G\text{-Hilb}_{\BC}(M)\rightarrow M/G$ is a crepant resolution.

Let us recall the definition of $G$-Hilbert schemes.
\begin{definition}
Let $S$ be a scheme and $M$ a $S$-scheme,
The \textit{$G$-Hilbert functor} of $M$ over $S$ $$\underline{G\text{-Hilb}}_{S}(M): (S-\text{schemes})^{op}\longrightarrow (\text{sets})$$ is defined by
\begin{equation*}
  \underline{G\text{-Hilb}}_{S}(M)(T) = \left\{  \begin{matrix}   \text{Quotient}\,G\text{-sheaves }[0\longrightarrow\CI\longrightarrow\CO_{M_{T}}\longrightarrow\CO_{Z}\longrightarrow 0]\\
   \text{ on }X_ T,\text{where } Z \text{ is finite flat over }T \text{, for every} \,t\in T\colon \\
   H^{0}(Z_{t},\CO_{Z_{t}})\,\text{is isomorphic}
\text{ to the regular representation}\\ 
 \text{ of }G

\end{matrix} \right\}
\end{equation*}
for an $S\text{-scheme}\,\,T$. If we take $T=S=\,\text{Spec}(\mathbb{C})$, every element in $G$-Hilb$_{\BC}(M)(\text{Spec}(\mathbb{C}))$ represents a $G$-cluster. Here a \textit{cluster} $Z\subset M$ is a zero-dimensional subscheme and a $G$-\textit{cluster} is a $G$-invariant cluster whose set of global sections $H^{0}(\CO_{Z},Z)$ is isomorphic to the regular representation $\mathbb{C}[G]$.
\end{definition}
\begin{proposition}\cite[Proposition 4.13]{blume2007mckay}
If $M$ is (quasi-)projective over $S$, then the functor $\underline{G\text{-Hilb}}_{S}(M)$ is represented by a (quasi-)projective $S$-scheme, which we denote by $G\text{-Hilb}_{S}(M)$.
\end{proposition}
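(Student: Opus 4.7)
The plan is to realize $\underline{G\text{-Hilb}}_{S}(M)$ as a locally closed subfunctor of the ordinary Hilbert functor $\underline{\text{Hilb}}^{|G|}_{S}(M)$ parametrizing flat families of length-$|G|$ subschemes of $M/S$. Since every $G$-cluster has length $|G| = \dim_{\BC}\BC[G]$, forgetting the $G$-linearization defines a natural transformation $\underline{G\text{-Hilb}}_{S}(M)\to \underline{\text{Hilb}}^{|G|}_{S}(M)$. By Grothendieck's construction of Hilbert schemes (see \cite[Theorem 5.14]{fantechi2005fundamental}), the target is represented by a (quasi-)projective $S$-scheme $\text{Hilb}^{|G|}_{S}(M)$ as soon as $M\to S$ is (quasi-)projective, so the task reduces to identifying the subfunctor of $G$-clusters inside it.

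This identification proceeds in two stages. First, the $G$-action on $M$ induces a $G$-action on $\text{Hilb}^{|G|}_{S}(M)$ via the universal family, and a flat family $Z\subset M_{T}$ is $G$-invariant precisely when its classifying morphism $T\to \text{Hilb}^{|G|}_{S}(M)$ factors through the fixed subscheme $\text{Hilb}^{|G|}_{S}(M)^{G}$. Since $G$ is finite and $\text{Hilb}^{|G|}_{S}(M)$ is separated over $S$, this fixed locus is a closed $S$-subscheme, cut out by equalizing the identity with each $\rho_{g}$. Second, within the fixed locus one imposes the condition that each geometric fibre $H^{0}(Z_{t},\CO_{Z_{t}})$ be isomorphic to the regular representation $\BC[G]$. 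For a finite flat family the pushforward $\pi_{*}\CO_{Z}$ is locally free of rank $|G|$ on $T$, and the $G$-action makes it a family of $|G|$-dimensional representations; since the isomorphism class of a finite-dimensional representation of a finite group is a discrete invariant (detected by the character on each conjugacy class), the locus where this invariant equals the character of $\BC[G]$ is open and closed in the fixed locus.

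Combining the two, $\underline{G\text{-Hilb}}_{S}(M)$ is cut out of $\text{Hilb}^{|G|}_{S}(M)$ by a closed condition followed by an open-and-closed condition, hence is represented by a locally closed $S$-subscheme; such a subscheme of a (quasi-)projective $S$-scheme is again (quasi-)projective over $S$. The main obstacle is the functorial construction in the first stage: one must check that the $G$-action on $\text{Hilb}^{|G|}_{S}(M)$ defined via pullback of ideal sheaves is compatible with base change, and that $G$-invariance of a subscheme over $T$ is genuinely equivalent to its classifying morphism landing in the fixed locus. Both facts ultimately rest on the universal property of the Hilbert functor together with the compatibility of pullback with the $G$-action on $M$. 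By contrast, the second stage is a direct application of cohomology-and-base-change: finite flatness of $\pi\colon Z\to T$ forces $\pi_{*}\CO_{Z}$ to commute with arbitrary base change, upgrading the pointwise representation-theoretic condition to a clopen condition on families.
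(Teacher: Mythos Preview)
The paper does not supply its own proof of this proposition; it simply cites \cite[Proposition 4.13]{blume2007mckay}. Your argument is the standard one and is essentially what appears in Blume's thesis: embed $\underline{G\text{-Hilb}}_{S}(M)$ into the ordinary Hilbert functor, pass to the $G$-fixed locus, and then restrict to the stratum where the fibres carry the regular representation.

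One refinement is worth making. You conclude that $G\text{-Hilb}_{S}(M)$ sits inside $\text{Hilb}^{|G|}_{S}(M)$ as a \emph{locally closed} subscheme, which on its own would yield only quasi-projectivity in both cases and would not recover the projective half of the statement. In fact the subscheme is \emph{closed}: the $G$-fixed locus is closed in $\text{Hilb}^{|G|}_{S}(M)$, and within it the regular-representation condition is, as you argue, open \emph{and} closed; a clopen subset of a closed subscheme is again closed in the ambient scheme. Thus when $M/S$ is projective, $G\text{-Hilb}_{S}(M)$ is a closed subscheme of the projective $S$-scheme $\text{Hilb}^{|G|}_{S}(M)$ and is therefore projective over $S$, as required. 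Finally, your isotypic-decomposition argument for the clopen condition tacitly uses that $|G|$ is invertible on the base; this is harmless here since the paper works over~$\BC$ throughout.
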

\begin{remark}
For a smooth projective variety $M$ over $\BC$ with finite group action of $G$,
there is a \textit{Hilbert-Chow} morphism $\tau\colon G\text{-Hilb}_{\BC}(M)\longrightarrow X$, where $X\coloneqq M/G$. On the closed points, $\tau$ sends a $G$-cluster to the orbit supporting it. Moreover $\tau$ is a projective morphism, is onto and is birational on one irreducible component of $G\text{-Hilb}_{\BC}(M)$.\footnote{The reader can consult \cite{blume2007mckay} for the explicit construction of $\tau$ and \cite[Section 4.3]{10.2307/2373541} or \cite[Section 7.1]{fantechi2005fundamental} for the properties of $\tau$}
\end{remark}

When $\dim(M) > 2$, Bridgeland, King and Reid gave a criterion for the morphism $Y\rightarrow M/G$ to be crepant, in \cite{0f6d937083584c2ba1ca157da0f5ba0a}, where $Y$ is an irreducible component containing all free $G$-orbits in $G\text{-Hilb}_{\BC}(M)$. Moreover, they have shown the equivalence between derived category $D^{b}(Y)$ of $Y$ and the equivariant derived category $D^{b}_{G}(M)$ of $M$.

To be precise, let us recall the main theorem in \cite{0f6d937083584c2ba1ca157da0f5ba0a}:

Let $M$ be a nonsingular quasi-projective variety of dimension $n$, $G\subset \text{Aut}(M)$ be a finite group of automorphisms of $M$, with the property that the stabiliser subgroup of any point $x\in M$ acts on the tangent space $T_{x}(M)$ as a subgroup of $\text{SL}(T_{x}(M))$. With this condition, the quotient variety $X\coloneqq M/G$ has only Gorenstein singularities (see \cite[Introduction]{0f6d937083584c2ba1ca157da0f5ba0a}). Denote by $Y$ the irreducible component of $G\text{-Hilb}_{\BC}(M)$ that contains the free orbits. We consider the universal closed subscheme $\CZ\subset Y\times M$ and write $p$ and $q$ for its projections to $Y$, $M$ respectively. Then there is a commutative diagram of schemes
\begin{center}
\begin{tikzcd}
                      & \CZ \arrow[ld, "p"'] \arrow[rd, "q"] &                     \\
Y \arrow[rd, "\tau"'] &                                      & M \arrow[ld, "\pi"] \\
                      & X                                    &                    
\end{tikzcd}
\end{center}
in which $q$ and $\tau$ are birational, $p$ and $\pi$ are finite, and $p$, moreover, flat since $\CZ$ is a universal family of $Y$. Moreover, equipping $X$ and $Y$ with trivial $G$-actions, all morphisms in the above diagram are equivariant.

Define the functor 
\begin{center}
    $\Phi:=\,\textbf{R}q_{*}\circ p^{*}:\,D^{b}(Y)\longrightarrow D^{b}_{G}(M)$,
\end{center}
where a sheaf $\CF$ on $Y$ is viewed as a $G$-sheaf, endowed with the trivial action. Note that, as $p$ is flat, the pullback functor $p^{*}$ is already exact, so we do not need to derive.
\begin{theorem}\cite[Theorem 1.1]{0f6d937083584c2ba1ca157da0f5ba0a}\label{BKR 1}
Suppose that the fiber product 
\begin{center}
    $Y\times_{X}Y\,=\,\left\{(y_{1},y_{2})\in Y\times Y\,|\,\tau(y_{1})=\,\tau(y_{2})\right\}\subset Y\times Y$
\end{center}
has dimension $\leq n+1$. Then $Y$ is a crepant resolution of $X$ and $\Phi$ is an equivalence of categories.
\end{theorem}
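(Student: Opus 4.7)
The plan is to realise $\Phi = \mathbf{R}q_{*}\circ p^{*}$ as an equivariant Fourier--Mukai transform with kernel $\CO_{\CZ}\in D^{b}_{G}(Y\times M)$ and to deduce the equivalence from a standard criterion applied to skyscraper sheaves. First I would construct the right and left adjoints $\Phi^{R}$, $\Phi^{L}$ by equivariant Grothendieck duality, using that $p$ is finite flat (so $p^{*}$ is exact and $p_{*}$ preserves boundedness). Existence of both adjoints, together with the fact that $D^{b}_{G}(M)$ is classically generated by objects accessible to $\Phi$, reduces the proof of equivalence to the single assertion that $\Phi$ is fully faithful.

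Next I would verify full faithfulness via the equivariant Bondal--Orlov / Bridgeland--Maciocia criterion: it suffices to check, for every pair of closed points $y_{1},y_{2}\in Y$, that
\begin{equation*}
\text{Hom}^{i}_{G}\bigl(\Phi(\CO_{y_{1}}),\Phi(\CO_{y_{2}})\bigr)
= \begin{cases} \BC & \text{if } y_{1}=y_{2} \text{ and } i=0,\\ 0 & \text{if } y_{1}\neq y_{2},\text{ or } i\notin[0,\dim Y]. \end{cases}
\end{equation*}
Since $p$ is flat and $\CZ$ is the universal $G$-cluster family, $\Phi(\CO_{y})$ is exactly the structure sheaf $\CO_{Z_{y}}$ of the $G$-cluster $Z_{y}\subset M$ with its tautological linearisation. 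When $\tau(y_{1})\neq\tau(y_{2})$ the supports of $Z_{y_{1}}$ and $Z_{y_{2}}$ are disjoint and the vanishing is automatic, so the interesting cases occur precisely over the fibre product $Y\times_{X} Y$.

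The hardest step, and the reason the dimension hypothesis enters, is the analysis when $y_{1}\neq y_{2}$ but $\tau(y_{1})=\tau(y_{2})$, together with the higher Ext-vanishing when $y_{1}=y_{2}$. The idea is to globalise the pointwise question: consider the two pullbacks of $\CZ$ along the projections $Y\times_{X}Y\times M\to Y\times M$ and study their derived tensor product on $Y\times_{X}Y\times M$. The bound $\dim(Y\times_{X}Y)\leq n+1$ allows one to invoke the new intersection theorem of Peskine--Szpiro and Roberts, forcing locally free resolutions to have the expected length and producing the required Tor-vanishing. Combined with the regular-representation condition in the definition of a $G$-cluster, which pins down the $G$-invariants of $\CO_{Z_{y_{1}}}\otimes^{L}\CO_{Z_{y_{2}}}$ to scalars along the diagonal and zero off it, this yields the necessary vanishings. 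As a by-product the same dimension bound forces $Y$ to be smooth of dimension $n$: the diagonal becomes a regular embedding of the expected codimension in $Y\times_{X}Y$.

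Once $\Phi$ is fully faithful, both the equivalence and the crepancy of $\tau$ follow from a Serre-functor argument. The hypothesis that every stabiliser acts in $\mathrm{SL}(T_{x}M)$ identifies the equivariant Serre functor on $D^{b}_{G}(M)$ with the shift $[n]$ up to a line bundle pulled back from $X$; compatibility of $\Phi$ with Grothendieck duality then yields $\omega_{Y}\cong \tau^{*}\omega_{X}$, so that $Y$ is a crepant resolution. Finally, a fully faithful Fourier--Mukai functor between derived categories of smooth varieties of the same dimension whose Serre functors match is automatically essentially surjective, since its image is an admissible subcategory closed under the Serre functor and hence equal to the whole category; this upgrades $\Phi$ to the desired equivalence.
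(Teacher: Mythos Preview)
The paper does not give its own proof of this statement: it is quoted verbatim as \cite[Theorem 1.1]{0f6d937083584c2ba1ca157da0f5ba0a} and used as a black box, with only the remark that for $\dim M\leq 3$ the hypothesis is automatic. So there is nothing in the paper to compare your proposal against.

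That said, what you have written is a reasonable outline of the actual Bridgeland--King--Reid argument: the reduction to full faithfulness via a spanning class of skyscrapers, the role of the new intersection theorem in controlling Ext's over the non-free locus (this is where the bound $\dim(Y\times_{X}Y)\le n+1$ is used), and the Serre-functor trick to upgrade full faithfulness to an equivalence and to extract $\omega_{Y}\cong\tau^{*}\omega_{X}$. One point of caution in your sketch: the smoothness of $Y$ is not a direct ``by-product'' of the dimension bound on $Y\times_{X}Y$; in BKR it is deduced \emph{a posteriori} from the equivalence (an equivalence $D^{b}(Y)\cong D^{b}_{G}(M)$ with $M$ smooth forces $Y$ to be smooth), and the logical order of the argument is somewhat delicate. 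But for the purposes of this paper none of this matters, since the theorem is simply imported.
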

\begin{remark}
When $\text{dim }M\leq\,3$ the condition of the theorem is automatically satisfied, since the dimension of the expectional locus of $Y\rightarrow X$ is smaller than 2. 
\end{remark}
If $\text{dim }M\leq\,3$, then the dimension of $Y\times_{X} Y$ depends heavily on the geometry of the $G$-Hilbert scheme $Y$. However, in general, understanding the geometry of $Y$ is hard. In our paper, we rely on the following result to ease this problem.
\begin{proposition}\cite[Proposition 2.40]{blume2007mckay}\label{Propostion 4.7}
Let $G=\,\text{Spec}\,\mathbb{C}[x]/(x^{r}-1)$ be the group scheme of $r$th roots of unity, and $V$ be an $n$-dimensional representation of $G$ over $\mathbb{C}$.

We define $\pi:\,M:=\,\mathbb{A}_{\mathbb{C}}(V)\longrightarrow X:=\,\mathbb{A}_{\mathbb{C}}(V)/G$, where $\mathbb{A}_{\mathbb{C}}(V)$ denotes the affine plane $\mathbb{A}_{\BC}^{n}$. Denote origin point of $\mathbb{A}_{\mathbb{C}}(V)$ by $0$. Then there is an isomorphism
\begin{center}
 $G\text{-Hilb}_{\BC}(M)\cong(\text{Bl}_{0}X)/G$
\end{center}
over $\BC$.
\end{proposition}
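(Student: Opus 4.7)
The plan is to use the universal property of the $G$-Hilbert scheme to construct a morphism $\phi\colon(\text{Bl}_{0}X)/G \to G\text{-Hilb}_{\BC}(M)$ and then verify that it is an isomorphism. First, I will produce an explicit family of $G$-clusters of $M$ parameterized by $(\text{Bl}_{0}X)/G$. Pulling back the finite morphism $\pi\colon M \to X$ along the blow-down $\beta\colon \text{Bl}_{0}X \to X$ yields $\widetilde{M}\coloneqq M \times_{X} \text{Bl}_{0}X$ equipped with a natural $G$-action, and I will identify within $\widetilde{M}$ a $G$-invariant subscheme $\CZ$ that is finite flat over $\text{Bl}_{0}X$ of degree $|G|$, with fibers carrying the regular $G$-representation. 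The universal property of the $G$-Hilbert scheme then supplies the desired morphism $\phi$.

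Next, I will check that $\phi$ is compatible with the natural maps to $X$: the blow-down $\beta$ on the source and the Hilbert-Chow morphism $\tau$ on the target. Over the free orbit locus $X \setminus \pi(M^{G})$, both $\beta$ and $\tau$ restrict to isomorphisms, so $\phi$ restricts to an isomorphism there as well. To extend across the exceptional locus, I will argue that $\phi$ is bijective on closed points by analyzing $G$-clusters of $M$ supported on $\pi^{-1}(0)$ and matching them bijectively with points of the exceptional divisor of $\beta$. Since both $(\text{Bl}_{0}X)/G$ and $G\text{-Hilb}_{\BC}(M)$ are projective over $X$ and normal, bijectivity together with properness yields that $\phi$ is an isomorphism via Zariski's main theorem.

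The hard part will be the first step: writing down the explicit family $\CZ$ and verifying that its fibers are genuine $G$-clusters rather than merely $G$-invariant length-$|G|$ subschemes. The construction depends on the weights of the representation $V$, and I will use the abelian structure of $G$ together with the linearity of the action to decompose coordinates into $G$-eigenspaces and work chart-by-chart in the toric presentation of $\text{Bl}_{0}X$. In particular, verifying that the sections of the structure sheaf of each fiber form the regular representation of $G$ reduces to a combinatorial check on character multiplicities in the graded invariant ring of $V$, after which flatness follows from the constancy of the Hilbert polynomial.
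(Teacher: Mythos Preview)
The paper does not supply a proof of this proposition; it is cited from Blume's thesis. So there is nothing in the paper to compare your outline against, and I evaluate it on its own terms.

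There is a genuine problem with the target of your construction. Since $G$ acts trivially on $X = M/G$, it also acts trivially on $\text{Bl}_0 X$, so the quotient in the displayed isomorphism is vacuous as written. The way the proposition is actually applied in the paper (to obtain $G\text{-Hilb}_\BC(\BA^3) \cong (\text{Bl}_0 \BA^3)/G$ with the nontrivial $G$-action on $\BA^3$) shows the intended statement is $G\text{-Hilb}_\BC(M) \cong (\text{Bl}_0 M)/G$. Your construction pulls back along $\text{Bl}_0 X \to X$ and is therefore aimed at the wrong space; the natural family over $(\text{Bl}_0 M)/G$ is simply $\text{Bl}_0 M$ itself, via the quotient map and the blow-down (compare the family $\CZ = \text{Bl}_{M^G}(M)$ used in the global situation).

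Your endgame also has a gap: you invoke Zariski's main theorem using normality of $G\text{-Hilb}_\BC(M)$, but normality (indeed irreducibility and reducedness) of $G$-Hilbert schemes is not automatic and is precisely the delicate point here. One must either show the classifying map is a closed immersion directly, or establish smoothness of $G\text{-Hilb}$ independently. Finally, be aware that even the corrected statement is false for an arbitrary representation $V$: for $\mu_4$ acting on $\BA^2$ with weights $(1,3)$, the quotient $(\text{Bl}_0 \BA^2)/\mu_4$ acquires an $A_1$ singularity on one chart, while $G\text{-Hilb}$ is the smooth minimal resolution of the $A_3$ surface singularity. The result in Blume presumably carries an extra hypothesis (all weights equal suffices and is the only case the paper needs); any valid argument must invoke it, and your proposed character-multiplicity check would fail without it.
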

{\Large{\part{{Equivariant Kuznetsov component}}}}
Let $M:= V(F)\subset \BP^{5}$ be a smooth cubic fourfold, where $F=F_{0}(x_{0},x_{1},x_{2})+F_{1}(x_{3},x_{4},x_{5})$ for $F_{0}$, $F_{1}$ cubic equations and $G:=\,\BZ/3\BZ$. We pick a generator $g$ of $G$, whose action on $M$ is given by
\begin{center}
    $g_{.}[x_{0}:x_{1}:x_{2}:x_{3}:x_{4}:x_{5}]=[x_{0}:x_{1}:x_{2}:e^{2\pi i/3}x_{3}:e^{2\pi i/3}x_{4}:e^{2\pi i/3}x_{5}]$.
\end{center}
The quotient space $M/G$ is denoted by $X$ and we write $Y$ for the $G$-Hilbert scheme $G\text{-Hilb}_{\BC}M$.

Since $M$ is smooth, we know by the Jacobian criterion that $V(F_{0})\subset \BP^{2}$ and $V(F_{1})\subset \BP^{2}$ are smooth elliptic curves. For simplicity, we denote $V(F_{0})$ by $E_{0}$ and $V(F_{1})$ by $E_{1}$. Our main goal is to prove Theorem \ref{main theorem} stating 
\begin{equation*}
    D^{b}(E_{0}\times E_{1})\cong \mathcal{K}u_{G}(M),
\end{equation*}
where the equivariant Kuznetsov component $\mathcal{K}u_{G}(M)$ is defined in Section \ref{Section about overview of Step 2}.

Our proof for the main theorem \ref{main theorem} proceeds in two steps:
Firstly, we analyze the geometry of the $G$-Hilbert scheme $Y$. It turns out that $Y\cong \text{Bl}_{E_{0}\times E_{1}}(\BP^{2}\times \BP^{2})$ and satisfies the condition for the derived Mckay correspondence of Theorem \ref{BKR 1}.
Using this, we establish the category equivalence between the derived category of the $G$-Hilbert scheme $D^{b}(Y)$ and the equivariant derived category $D^{b}_{G}(M)$. Finally, we compute the image of components of $D^{b}_{G}(M)$ in $D^{b}(Y)$ and then by using mutation functors, we identify $\mathcal{K}u_{G}(M)$ with $D^{b}(E_{0}\times E_{1})$.

\section{Step I: The geometry of the G-Hilbert scheme Y}
\subsection{The main goal and central idea}
We want to construct the $G$-Hilbert scheme $Y$ explicitly. Since there is no systematic method to do this, we need to first guess the explicit description of $Y$ and then prove it is correct.

Optimistically, the $G$-Hilbert scheme would yield a crepant resolution of $M/G$. Intuitively speaking, the fixed point locus $M^{G}$ is the only obstruction to $M/G$ being smooth. To get our resolution of singularities, it is natural
to blow up the fixed locus first and then take the quotient of the group action.

By direct calculation, the fixed locus $M^{G}$ in $M$ can be identified with the union of two elliptic curves $E_{0}\times 0 \,\bigcup \,0\times E_{1}$, where 
\begin{equation*}
E_{0}\times 0\coloneqq\left\{
[x_{0}:x_{1}:x_{2}:x_{3}:x_{4}:x_{5}]\in M\,\mid\begin{matrix} \, [x_{0}:x_{1}:x_{2}]\in V(F_{0})\subset \BP^{2} \,\\
\text{and}\,x_{3}=x_{4}=x_{5}=0\end{matrix} \right\}
\end{equation*}
\begin{equation*}
0\times E_{1}\coloneqq \left\{
[x_{0}:x_{1}:x_{2}:x_{3}:x_{4}:x_{5}]\in M\,\mid\begin{matrix} \, [x_{3}:x_{4}:x_{5}]\in V(F_{1})\subset \BP^{2} \,\\
\text{and}\,x_{0}=x_{1}=x_{2}=0\end{matrix} \right\}
\end{equation*}
Now, let us study the quotient space $\text{Bl}_{M^{G}}(M)/G$ of the blow up $\text{Bl}_{M^{G}}(M)$ of the fixed locus $M^{G}$ on $M$. 
\subsection{The geometry of $\text{Bl}_{M^{G}}(M)/G$}
The first thing, that we should check is that our intuition holds water and $\text{Bl}_{M^{G}}(M)/G\longrightarrow X$ is a resolution, i.e.\,$\text{Bl}_{M^{G}}(M)/G$ is smooth over Spec($\BC$). 
\begin{proposition}\label{Proposition about smoothness of Y}
$\text{Bl}_{M^{G}}(M)/G$ is smooth over Spec$(\BC)$.
\end{proposition}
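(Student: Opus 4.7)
The plan is to reduce smoothness of $\text{Bl}_{M^{G}}(M)/G$ to a purely local computation of the $G$-action near the exceptional divisor, and then to observe that the stabiliser at each fixed point acts by a pseudo-reflection, so the ring of invariants is a polynomial ring. Away from the exceptional divisor $E$, the blow-down $\text{Bl}_{M^{G}}(M)\to M$ is an isomorphism onto $M\setminus M^{G}$, where $G$ acts freely, so the quotient is already smooth there. The whole issue is concentrated over $M^{G}$, and by the symmetry between the two components $E_{0}\times 0$ and $0\times E_{1}$ of $M^{G}$ it suffices to treat a point $p\in E_{0}\times 0$.

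Next I set up local coordinates on $M$ at $p=[1:a:b:0:0:0]$. Smoothness of the plane cubic $E_{0}$ at $[1:a:b]$ ensures, after possibly swapping $x_{1}$ and $x_{2}$, that $\partial F_{0}/\partial x_{1}(1,a,b)\neq 0$. Because $F_{1}$ is a cubic in $(x_{3},x_{4},x_{5})$ whose linear part at the origin vanishes, we have $\partial F/\partial x_{j}(p)=0$ for $j=3,4,5$ and $\partial F/\partial x_{1}(p)=\partial F_{0}/\partial x_{1}(1,a,b)\neq 0$, so $(x_{2},x_{3},x_{4},x_{5})$ furnish local coordinates on $M$ at $p$. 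In these coordinates $M^{G}$ is cut out by $(x_{3},x_{4},x_{5})$ and $G$ acts linearly as $(x_{2},x_{3},x_{4},x_{5})\mapsto(x_{2},\zeta x_{3},\zeta x_{4},\zeta x_{5})$ with $\zeta=e^{2\pi i/3}$. The blow-up of $M$ at $M^{G}$ is then locally the blow-up of $\BA^{4}$ along $(x_{3},x_{4},x_{5})$. In the standard affine chart with coordinates $(x_{2},x_{3},v_{4},v_{5})$ defined by $x_{4}=x_{3}v_{4}$ and $x_{5}=x_{3}v_{5}$, the exceptional divisor is $\{x_{3}=0\}$ and the lifted $G$-action becomes $(x_{2},x_{3},v_{4},v_{5})\mapsto(x_{2},\zeta x_{3},v_{4},v_{5})$, i.e.\ a pseudo-reflection fixing the hyperplane $\{x_{3}=0\}$ pointwise. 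The invariant subring is the polynomial ring $\BC[x_{2},x_{3}^{3},v_{4},v_{5}]$, so the chart quotient is isomorphic to $\BA^{4}$; equivalently one may invoke the Chevalley--Shephard--Todd theorem. The remaining two charts of the blow-up and the analogous points above $0\times E_{1}$ are handled by the same symmetric computation.

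The only delicate point, and therefore the main obstacle, is verifying that $(x_{2},x_{3},x_{4},x_{5})$ genuinely serve as local coordinates on the cubic hypersurface $M$, not merely on the ambient $\BP^{5}$. This is precisely where the decomposition $F=F_{0}+F_{1}$ enters: the absence of linear terms in $F_{1}$ at the origin is what both keeps $x_{3},x_{4},x_{5}$ as local coordinates on $M$ and linearises the $G$-representation on the normal bundle of $M^{G}$, thereby producing the clean pseudo-reflection model on the blow-up. Once this reduction is in place, smoothness of the quotient is immediate from the polynomial description of the invariant ring.
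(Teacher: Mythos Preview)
Your argument is correct. Both you and the paper ultimately reduce to a local computation of the $G$-action on a chart of the blow-up and check that the invariant ring is smooth, but the route differs. The paper embeds $\text{Bl}_{M^{G}}(M)$ inside $\text{Bl}_{\BP^{2}\times 0\,\cup\,0\times\BP^{2}}(\BP^{5})$, writes out the blow-up of the ambient $\BP^{5}$ in explicit projective coordinates, takes $G$-invariants, and then restricts to $M$ to obtain the explicit affine equation $F_{0}(1,x_{1},x_{2})+x_{3}'F_{1}(1,y_{4},y_{5})=0$ with $x_{3}'=x_{3}^{3}$; smoothness then follows from the Jacobian criterion using smoothness of $E_{0}$ and $E_{1}$. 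You instead work intrinsically on $M$: you use the implicit function theorem (equivalently, an \'etale-local linearisation) to replace $M$ near $p$ by $\BA^{4}$ with a diagonal $G$-action, observe that on the blow-up the action becomes a pseudo-reflection, and conclude via the polynomiality of the invariant ring (or Chevalley--Shephard--Todd). Your approach is cleaner as a standalone smoothness proof and makes the conceptual reason transparent; the paper's more explicit computation has the advantage that the very equations obtained there are immediately recycled in the next theorem to identify $\text{Bl}_{M^{G}}(M)/G$ globally with $\text{Bl}_{E_{0}\times E_{1}}(\BP^{2}\times\BP^{2})$, something your intrinsic argument does not directly provide.
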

\begin{proof}
The idea of the proof is to compute $\text{Bl}_{M^{G}}(M)/G$ locally and then use the Jacobian criterion to show the smoothness of $\text{Bl}_{M^{G}}(M)/G$.

We observe that $\text{Bl}_{M^{G}}(M)/G$ is a closed subvariety of $\BLP/G$, since $\BP^{2}\times 0 \,\bigcap M\,= E_{0}\times 0$ and $0\times\BP^{2} \,\bigcap M\,=0\times E_{1}$. To obtain the local description of $\text{Bl}_{M^{G}}(M)/G$, our strategy is as follows:
We first do the computation for $\BLP$, then we use the equations of $M$ in $\BP^{5}$ to acquire the local equation of $\text{Bl}_{M^{G}}(M)$.

\noindent Step 1: We compute $\BLP$. Since $\BP^{2}\times 0 \bigcap 0\times \BP^{2}=\emptyset$, we have $\BLP= \text{Bl}_{\BP^{2}\times 0}(\text{Bl}_{0\times \BP^{2}}(\BP^{5}))$. And this allows us to decompose the computation into two easier steps.

Firstly, $\text{Bl}_{0\times \BP^{2}}(\BP^{5})$ is given by
\begin{equation*}
x_{3}y_{4}=x_{4}y_{3},\ \ x_{3}y_{5}=x_{5}y_{3},\ \  x_{4}y_{5}=x_{5}y_{4}    
\end{equation*}
in $\BP^{5}\times\BP^{2}$, where $[x_{0}:x_{1}:x_{2}:x_{3}:x_{4}:x_{5}]\times [y_{3}:y_{4}:y_{5}]$ are coordinates of $\BP^{5}\times \BP^{2}$. Then the further blow-up $\text{Bl}_{\BP^{2}\times 0}(\text{Bl}_{0\times \BP^{2}}(\BP^{5}))$ is cut out by the equations
\begin{align*}
&x_{0}y_{1}=x_{1}y_{0},\ \ x_{0}y_{2}=x_{2}y_{0},\ \ x_{1}y_{2}=x_{2}y_{1},\\ &x_{3}y_{4}=x_{4}y_{3},\ \  x_{3}y_{5}=x_{5}y_{3},\ \  x_{4}y_{5}=x_{5}y_{4}    
\end{align*}
in $\BP^{5}\times\BP^{2}\times \BP^{2}$ where $[x_{0}:x_{1}:x_{2}:x_{3}:x_{4}:x_{5}]\times [y_{0}: y_{1}: y_{2}]\times [y_{3}: y_{4}: y_{5}]$ are coordinates for $\BP^{5}\times \BP^{2}\times \BP^{2}$.

\noindent Step 2: Now we consider the quotient $\BLP/G$.
The blow-up $\BLP$ has a canonical affine cover
\begin{equation*}
 \{ U_{x_{i},y_{j},y_{k}}\mid 0\leq i\leq 5, \ 0\leq j \leq 2 \text{ and } 3\leq k \leq 5  \},
\end{equation*}
where $U_{x_{i},y_{j},y_{k}}$ is given by $x_{i}=y_{j}=y_{k}=1$ in $\BLP$.
Without loss of generality, we only do the computation for $U_{x_{0},y_{0},y_{3}}$. All the arguments below can be easily repeated and are essentially the same for the other affine subvarieties $U_{x_{i},y_{j},y_{k}}$.

The equations of $U_{x_{0},y_{0},y_{3}}$ in $\BA^{5}\times \BA^{2}\times \BA^{2}$ become
\begin{equation*}
 x_{1}=y_{1},\ x_{2}=y_{2},\ x_{4}=x_{3}y_{4},\ x_{5}=x_{3}y_{5}.  
\end{equation*}
Thus we have 
\begin{equation*}
 U_{x_{0},y_{0},y_{3}} \cong \text{Spec}(\BC[x_{1},x_{2},x_{3},y_{4},y_{5}])=\BA_{\BC}^{5}   
\end{equation*}
and we can take $x_{1},\,x_{2},\,x_{3},\,y_{4},\,y_{5}$ as the coordinates of $U_{x_{0},y_{0},y_{3}}$. The generator $g\in G$ acts on the coordinates $y_{4}$ and $y_{5}$ trivially, since 
\begin{equation*}
 g.x_{4}=(g.x_{3})y_{4},\ g.x_{5}=(g.x_{3})y_{5},   
\end{equation*}
so the action of $g$ on $U_{x_{0},y_{0},y_{3}}$ is given by
\begin{equation*}
(x_{1}, x_{2}, x_{3}, y_{4}, y_{5})\mapsto(x_{1}, x_{2},  e^{\frac{2\pi i}{3}}x_{3}, y_{4}, y_{5}).   
\end{equation*}
Thus \begin{equation*}
  \Uxyy/G\cong \text{Spec}(\BC[x_{1},x_{2},x_{3},y_{4},y_{5}]^{G})=\text{Spec}(\BC[x_{1},x_{2},x_{3}^{3},y_{4},y_{5}])\cong \BA_{\BC}^{5},   
 \end{equation*}
where for the last isomorphism, we write $x_{3}^{\prime}$ for $x_{3}^{3}$.

\noindent Step 3: Finally we restrict the previous computation to $\text{Bl}_{M^{G}}(M)/G$. We use $\Uxxyy$ to denote $\text{Bl}_{M^{G}}(M)/G\bigcap (\Uxyy/G)$. Then the equation of $\Uxxyy$ in $\Uxyy/G\cong \BA_{\BC}^{5}$ is:
\begin{equation}\label{Equation 1}
    F_{0}(1,x_{1},x_{2})+F_{1}(x_{3},x_{3}y_{4},x_{3}y_{5})= F_{0}(1,x_{1},x_{2})+x_{3}^{\prime}F_{1}(1,y_{4},y_{5})=0.
\end{equation}
Since there is no closed point in $\Uxxyy$ such that
\begin{align*}
 & F_{0}(1,x_{1},x_{2})=0,\ \frac{\partial F_{0}}{\partial x_{1}}=0,\ \frac{\partial F_{0}}{\partial x_{2}}=0, \\
 & F_{1}(1,x_{4},x_{5})=0,\  \frac{\partial F_{1}}{\partial x_{4}}=0,\ \frac{\partial F_{1}}{\partial x_{5}}=0,
\end{align*}
by the Jacobian criterion, $\Uxxyy$ is smooth over $\text{Spec}(\BC)$.
\end{proof}
We have the following global description of $\text{Bl}_{M^{G}}(M)/G$.
\begin{theorem}\label{Theorem 1}
$\text{Bl}_{E_{0}\times E_{1}}(\BP^{2}\times\BP^{2})\cong (\text{Bl}_{M^{G}}(M))/G$.
\end{theorem}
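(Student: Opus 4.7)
The plan is to construct a natural morphism $\phi\colon \text{Bl}_{M^G}(M)/G \to \text{Bl}_{E_0 \times E_1}(\BP^2 \times \BP^2)$ via the universal property of blow-ups and then verify that it is an isomorphism. The starting point is the $G$-equivariant rational map
\[
\rho\colon \BP^5 \dashrightarrow \BP^2 \times \BP^2, \qquad [x_0:\cdots:x_5] \mapsto ([x_0:x_1:x_2],[x_3:x_4:x_5]),
\]
whose indeterminacy locus is exactly $\BP^2 \times 0 \,\bigcup\, 0 \times \BP^2$ and on whose target $G$ acts trivially. Blowing up resolves $\rho$ to a morphism $\tilde\rho\colon \BLP \to \BP^2 \times \BP^2$. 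Restricting to $M$, and using $M^G = M \cap (\BP^2 \times 0 \,\bigcup\, 0 \times \BP^2) = E_0 \sqcup E_1$, yields a $G$-equivariant morphism $\tilde\rho_M\colon \text{Bl}_{M^G}(M) \to \BP^2 \times \BP^2$, which descends to $\bar\rho_M\colon \text{Bl}_{M^G}(M)/G \to \BP^2 \times \BP^2$.

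Next I would carry out a fibrewise analysis of $\bar\rho_M$. Over $([a],[b])$ with $F_0(a)\, F_1(b) \neq 0$, the fibre of $\tilde\rho_M$ is the zero locus in $\BP^1$ of the cubic $\lambda^3 F_0(a) + \mu^3 F_1(b)$, consisting of three distinct points which form a single $G$-orbit; hence the fibre of $\bar\rho_M$ is a single point and $\bar\rho_M$ is birational. Over $([a],[b]) \in E_0 \times E_1$ this cubic vanishes identically, so the fibre of $\tilde\rho_M$ is the full $\BP^1$ and that of $\bar\rho_M$ is $\BP^1/G \cong \BP^1$. In particular $\bar\rho_M^{-1}(E_0 \times E_1)$ is a divisor, and since $\text{Bl}_{M^G}(M)/G$ is smooth by Proposition \ref{Proposition about smoothness of Y}, this divisor is Cartier. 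The universal property of the blow-up then produces the factorisation $\bar\rho_M = \pi \circ \phi$, where $\pi \colon \text{Bl}_{E_0 \times E_1}(\BP^2 \times \BP^2) \to \BP^2 \times \BP^2$ is the blow-down, and $\phi$ is automatically birational.

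Finally I would prove $\phi$ is an isomorphism by comparing local charts, leaning on the explicit calculation already performed in the proof of Proposition \ref{Proposition about smoothness of Y}. In the chart $\Uxxyy$ the source is cut out in $\BA^5_{\BC}$ by $F_0(1,x_1,x_2) + x_3' F_1(1,y_4,y_5) = 0$, whereas the standard affine chart of $\text{Bl}_{E_0 \times E_1}(\BP^2 \times \BP^2)$ over the product chart $a_0 = b_3 = 1$ is cut out in $\BA^5_{\BC}$ by the analogous relation $F_0(1,a_1,a_2) + t\, F_1(1,b_4,b_5) = 0$. The identification $(x_1,x_2,x_3',y_4,y_5) \leftrightarrow (a_1,a_2,t,b_4,b_5)$ matches the defining equations, and running through the other charts indexed by $U_{x_i,y_j,y_k}$ exhibits $\phi$ as a patching of explicit local isomorphisms.

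The main obstacle I foresee is the bookkeeping in this last step: above each product chart of $\BP^2 \times \BP^2$ there are two natural affine charts of $\text{Bl}_{E_0 \times E_1}(\BP^2 \times \BP^2)$ (corresponding to the two generators $F_0$ and $F_1$ of the ideal of $E_0 \times E_1$), and one must keep careful track of which local chart on the source maps to which. If this becomes unwieldy, an alternative finish is to invoke the Fujiki--Nakano contractibility criterion: the exceptional divisor of $\bar\rho_M$ is a $\BP^1$-bundle over the smooth codimension-two subvariety $E_0 \times E_1$, and a check of its normal bundle shows that it restricts to $\CO_{\BP^1}(-1)$ on each fibre, which forces $\bar\rho_M$ to coincide with the blow-up morphism and hence $\phi$ to be an isomorphism.
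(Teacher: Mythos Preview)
Your proposal is correct and ends at exactly the same local-chart comparison as the paper (matching the equation $F_{0}(1,x_{1},x_{2})+x_{3}^{\prime}F_{1}(1,y_{4},y_{5})=0$ on $\Uxxyy$ with the equation of the blow-up in the chart $\Tilde{x}_{0}=\Tilde{x}_{3}=\Tilde{y}_{1}=1$), but the route to that computation is genuinely different.

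The paper gives no global construction at all: it simply writes the blow-up $\text{Bl}_{E_{0}\times E_{1}}(\BP^{2}\times\BP^{2})\subset \BP^{2}\times\BP^{2}\times\BP^{1}$ as the locus $\Tilde{y}_{1}F_{0}+\Tilde{y}_{0}F_{1}=0$, exhibits an explicit coordinate bijection between $W_{0,3,1}^{\prime}$ and $\Uxxyy$, asserts that the same works for the remaining $17$ pairs of charts, and then states that these local isomorphisms glue. By contrast, you first build a global morphism $\phi$ by resolving the rational map $\BP^{5}\dashrightarrow\BP^{2}\times\BP^{2}$, descending to the quotient, and invoking the universal property of the blow-up via a fibrewise analysis; the affine check then only has to show that this already-global $\phi$ is an isomorphism on each chart. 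The payoff is that the gluing compatibility---which the paper leaves to the reader over $18$ charts---is automatic in your setup, and the geometric origin of the isomorphism is clearer. The paper's version, on the other hand, is entirely self-contained and needs neither the fibre description nor any appeal to smoothness of the quotient. Your alternative finish via the Fujiki--Nakano criterion is yet another route not considered in the paper; it would also work, but once the affine equations already match, invoking contractibility is more machinery than necessary.

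One small point to tighten if you pursue your argument: the universal property requires the \emph{scheme-theoretic} inverse image $\bar\rho_{M}^{-1}(E_{0}\times E_{1})$ to be Cartier, not just the reduced set-theoretic one, so your fibre dimension count should be supplemented by a local computation (or by the observation that on a smooth variety it suffices to check the pulled-back ideal is locally principal). This is immediate from the same chart description you already use, so it is not a real obstacle.
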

\begin{proof}
Let $[\Tilde{x}_{0}:\Tilde{x}_{1}:\Tilde{x}_{2}]\times[\Tilde{x}_{3}:\Tilde{x}_{4}:\Tilde{x}_{5}]$ be coordinates of $\BP^{2}\times\BP^{2}$. Recall that the equations of $E_{0}$ and $E_{1}$ are $F_{0}(\Tilde{x}_{0},\Tilde{x}_{1},\Tilde{x}_{2})=0$ and $F_{1}(\Tilde{x}_{3},\Tilde{x}_{4},\Tilde{x}_{5})=0$ respectively. Then the blow-up $\text{Bl}_{E_{0}\times E_{1}}(\BP^{2}\times\BP^{2})$ is cut out by the equation 
\begin{equation*}
\Tilde{y}_{1}F_{0}(\Tilde{x}_{0},\Tilde{x}_{1},\Tilde{x}_{2})+\Tilde{y}_{0}F_{1}(\Tilde{x}_{3},\Tilde{x}_{4},\Tilde{x}_{5})=0   
\end{equation*}
in $\BP^{2}\times \BP^{2}\times \BP^{1}$, where $[\Tilde{y}_{0}:\Tilde{y}_{1}]$ are the coordinates of $\BP^{1}$. Let $W_{i,l,m}$ be the affine open variety of $\BP^{2}\times \BP^{2}\times \BP^{1}$ given by $\Tilde{x}_{i}=\Tilde{x}_{l}=\Tilde{y}_{m}=1$. Write $W_{i,l,m}^{\prime}$ for the restriction of $\text{Bl}_{E_{0}\times E_{1}}(\BP^{2}\times\BP^{2})$ to $W_{i,l,m}$. 

Without loss of generality, we take the affine piece $W_{0,3,1}^{\prime}$ with its defining equation in $\BA^{5}_{\BC}$ given by
\begin{equation*}
 F_{0}(1,\Tilde{x}_{1},\Tilde{x}_{2})+\Tilde{y}_{0}F_{1}(1,\Tilde{x}_{4},\Tilde{x}_{5})=0.  
\end{equation*}
Let $\Uxxyy$ be the affine open subvariety of $(\text{Bl}_{M^{G}}(M))/G$ that we used in the proof of Proposition \ref{Proposition about smoothness of Y}.
Then there is an isomorphism between $W_{0,3,1}^{\prime}$ and $\Uxxyy$, given by 
\begin{equation*}
 (\Tilde{x}_{1},\Tilde{x}_{2},\Tilde{x}_{4},\Tilde{x}_{5},\Tilde{y}_{0})\mapsto (x_{1},x_{2},y_{4},y_{5},x_{3}^{\prime}). 
\end{equation*}
Similarly, one can check a similiar isomorphism for the other 17 pairs of affine open subvarieties from $\text{Bl}_{E_{0}\times E_{1}}(\BP^{2}\times\BP^{2})$ and $(\text{Bl}_{M^{G}}(M))/G$. Moreover, we can verify that these given local isomorphisms can be glued together. This yields an isomorphism as claimed.

\end{proof}
\subsection{The identification of $G\text{-Hilb}_{\BC}(M)$ with $\text{Bl}_{M^{G}}(M)/G$}
Until now we have already shown that $\text{Bl}_{M^{G}}(M)/G$ is smooth over $\text{Spec}(\BC)$. Moreover, it has a simple global description: $\text{Bl}_{E_{0}\times E_{1}}(\BP^{2}\times\BP^{2})$. Now we want to identify $\text{Bl}_{M^{G}}(M)/G$ with the $G$-Hilbert scheme $G\text{-Hilb}_{\BC}(M)$. 

Before delving into the details, let us briefly introduce the strategy we will use here. Since the $G$-Hilbert functor is stable under base change, it is enough to understand the $G$-Hilbert scheme of $G\text{-Hilb}_{\BC}(\BP^{5})$. 

We will start with figuring out the geometry of  $G\text{-Hilb}_{\BC}(\BA^{5})$, the open subset of $G\text{-Hilb}_{\BC}\\(\BP^{5})$ obtained by taking $x_{0}=1$ in the coordinates $[x_{0}:x_{1}:x_{2}:x_{3}:x_{4}:x_{5}]$ of $\BP^{5}$. Then the generator $g\in G$ acts on $\BA^{5}$ by mapping $(x_{1},x_{2},x_{3},x_{4},x_{5})$ to $(x_{1},x_{2},e^{2\pi i/3}x_{3},e^{2\pi i/3}x_{4},\\e^{2\pi i/3}x_{5})$. If we only consider the last three coordinates $(x_{3},x_{4},x_{5})$, we can identitify $\BA^{3}$ with a closed subscheme of $\BA^{5}$ via 
\begin{equation*}
\BA^{3}\cong 0\times \BA^{3}\subset \BA^{5}
\end{equation*}
and $\BA^{3}$ is endowed with a natural $G$-action given by
\begin{center}
$g_{.}(x_{3},x_{4},x_{5})=(e^{2\pi i/3}x_{3},e^{2\pi i/3}x_{4},e^{2\pi i/3}x_{5})$.
\end{center} 
Since $G$ acts on the coordinates $x_{1}$ and $x_{2}$ trivially, it is natural to expect that one can move this part out from $G\text{-Hilb}_{\BC}(\BA^{5})$. Explicitly, we have the following isomorphism:
\begin{lemma}\label{Lemma about G-Hilbet scheme}
$G\text{-Hilb}_{\BC}(\BA^{5}) \cong \BA^{2}\times G\text{-Hilb}_{\BC}(\BA^{3})$.
\end{lemma}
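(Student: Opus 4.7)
The plan is to establish the isomorphism on the level of functors of points. Write $\BA^{5} = \BA^{2} \times \BA^{3}$ as $G$-varieties, with $G$ acting trivially on the first factor. A $T$-point of $G\text{-Hilb}_{\BC}(\BA^{5})$ is a $G$-invariant closed subscheme $Z \subset \BA^{5} \times T$, flat and finite over $T$, whose geometric fibres are each isomorphic to the regular representation $\BC[G]$. I will construct natural transformations in both directions between the functors represented by $G\text{-Hilb}_{\BC}(\BA^{5})$ and $\BA^{2} \times G\text{-Hilb}_{\BC}(\BA^{3})$ and check that they are mutually inverse, then conclude by Yoneda.

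For the forward map, let $\pi : Z \to T$ denote the structure morphism. The key algebraic input is that the trivial representation occurs with multiplicity exactly one in $\BC[G]$, so that, using flatness of $\pi$ together with the fact that $|G|=3$ is invertible in $\BC$, the canonical unit map $\CO_{T} \to (\pi_{*}\CO_{Z})^{G}$ is an isomorphism of line bundles. Because the coordinates $x_{1}, x_{2}$ are $G$-invariant, their images in $\pi_{*}\CO_{Z}$ lie in $(\pi_{*}\CO_{Z})^{G} \cong \CO_{T}$ and therefore equal scalars $a_{1}, a_{2} \in \Gamma(T, \CO_{T})$. This defines a morphism $a : T \to \BA^{2}$, and it shows that $Z$ is scheme-theoretically contained in the graph $\Gamma_{a} \times \BA^{3} \subset \BA^{5} \times T$. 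Identifying $\Gamma_{a} \times \BA^{3}$ with $\BA^{3} \times T$ via the obvious projection, I view $Z$ as a closed subscheme $Z' \subset \BA^{3} \times T$, and observe that the defining conditions (finite flatness over $T$ and fibrewise isomorphism to $\BC[G]$) transport verbatim, so $Z'$ is a $G$-cluster for the scaling action of $G$ on $\BA^{3}$. This produces the natural transformation $Z \mapsto (a, Z')$.

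The inverse assignment sends $(a, Z')$ to the subscheme $Z := \Gamma_{a} \times_{T} Z' \subset \BA^{2} \times \BA^{3} \times T = \BA^{5} \times T$. Finite flatness, $G$-invariance and the regular-representation condition on geometric fibres are preserved since the first factor contributes only the trivial $G$-representation on a copy of $\CO_{T}$. The two constructions are evidently inverse to one another, and both are compatible with base change on $T$. The main technical obstacle I expect is precisely the scheme-theoretic (not merely set-theoretic) containment $Z \subset \Gamma_{a} \times \BA^{3}$ in families: this hinges on the two facts that $\pi_{*}\CO_{Z}$ is locally free of rank $|G|$ and that formation of $G$-invariants commutes with arbitrary base change, both of which hold because $|G|$ is invertible in $\BC$ and $\pi$ is flat. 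Once this point is secured, the argument is formal.
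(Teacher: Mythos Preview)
Your argument is correct. The paper itself does not give a direct proof: it simply invokes the general statement (cited from T\'erouanne and from Blume) that for a $G$-scheme $X$ and a scheme $Y$ with trivial $G$-action one has $\underline{G\text{-Hilb}}_{S}(X\times_{S}Y)\cong \underline{G\text{-Hilb}}_{S}(X)\times_{S}\underline{Y}$, and then specializes to $X=\BA^{3}$, $Y=\BA^{2}$. Your functor-of-points argument is essentially a self-contained proof of that cited result in the case at hand. The crucial step you single out --- that the unit $\CO_{T}\to(\pi_{*}\CO_{Z})^{G}$ is an isomorphism of line bundles, forcing the $G$-invariant coordinates $x_{1},x_{2}$ to come from $\CO_{T}$ and hence giving the scheme-theoretic containment $Z\subset\Gamma_{a}\times\BA^{3}$ --- is exactly what drives the general statement, and your justification (finite flatness of $\pi$, multiplicity one of the trivial representation in $\BC[G]$, exactness of $(-)^{G}$ in characteristic zero) is sound. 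What you gain over the paper is a self-contained treatment; what the paper gains is brevity and a pointer to the result in its natural generality.
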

\begin{proof}
This is a direct corollary of \cite[Proposition 1.4.4]{terouanne:tel-00006683} or \cite[Corollary 4.24]{blume2007mckay}, where the following general statement is proved: Let $X$ and $Y$ be schemes with a $G$-action over $S$. If $Y$ is endowed with the trivial $G$-action, then 
\begin{center}
$\underline{G\text{-Hilb}}_{S}(X)\times_{S}\underline{Y}\cong \underline{G\text{-Hilb}}_{S}(X\times_{S} Y)$.   
\end{center}
\end{proof}

\begin{proposition}\label{remark 2}
The $G$-Hilbert scheme $G\text{-Hilb}_{\BC}(\BA^{5})$ is an open subscheme of $G\text{-Hilb}_{\BC}(\BP^{5})$ and the $G$-Hilbert scheme $G\text{-Hilb}_{\BC}(\BP^{5})$ is isomorphic to $(\BLP)/G$.
\end{proposition}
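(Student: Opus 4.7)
The first claim instantiates a general principle: $G\text{-Hilb}_{\BC}(-)$ sends $G$-equivariant open immersions to open immersions of representing schemes. The chart $\BA^{5}=\{x_{0}\neq 0\}\subset \BP^{5}$ is $G$-invariant because $g$ fixes $x_{0}$. The condition that a $G$-cluster $Z$ have its (necessarily finite) support contained in the open set $\BA^{5}$ is open on the parameter space, so restricting the universal family over $G\text{-Hilb}_{\BC}(\BP^{5})$ yields an open subscheme representing the $G$-Hilbert functor of $\BA^{5}$.

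For the second claim, I would work locally on the $G$-invariant affine cover $\{U_{i}=\{x_{i}\neq 0\}\}_{0\le i\le 5}$ of $\BP^{5}$; each $U_i$ is $G$-invariant since $G$ acts diagonally in the homogeneous coordinates. Under the identification $U_{i}\cong \BA^{5}$, the generator $g$ fixes the three coordinates indexed by the block of $\{0,1,2\}\cup\{3,4,5\}$ containing $i$ and multiplies the remaining three by a primitive cube root of unity; the fixed locus $U_{i}^{G}$ is precisely the $\BA^{2}$ cut out by setting the scaled coordinates to zero, i.e.\ $U_{i}\cap (\BP^{2}\times 0 \sqcup 0\times \BP^{2})$. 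Splitting $\BA^{5}\cong \BA^{2}\times \BA^{3}$ along the fixed and moving directions, Lemma \ref{Lemma about G-Hilbet scheme} yields
\[
G\text{-Hilb}_{\BC}(U_{i})\cong \BA^{2}\times G\text{-Hilb}_{\BC}(\BA^{3}),
\]
and Proposition \ref{Propostion 4.7} identifies the second factor with $(\text{Bl}_{0}\BA^{3})/G$. Combining these descriptions gives
\[
G\text{-Hilb}_{\BC}(U_{i})\cong \bigl(\text{Bl}_{U_{i}^{G}}U_{i}\bigr)/G,
\]
which is precisely the restriction of $\BLP/G$ over the image of $U_{i}$, because the centre of the blow-up $\BLP$ meets $U_{i}$ exactly along $U_{i}^{G}$.

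The final task, and the main technical obstacle, is to patch these local isomorphisms into a single global one. Both $G\text{-Hilb}_{\BC}(\BP^{5})$ and $\BLP/G$ carry canonical morphisms to $\BP^{5}/G$ (the Hilbert--Chow morphism and the quotient of the blow-down, respectively), and the local isomorphisms constructed above are morphisms over $\BP^{5}/G$. The cleanest route is to exhibit a single family of $G$-clusters on $\BLP/G \times \BP^{5}$, obtained from the pullback of the $G$-orbit structure of $\BP^{5}$ under $\BLP\to \BP^{5}$ together with the descent along the $G$-action on $\BLP$, and to invoke the universal property of $G\text{-Hilb}_{\BC}(\BP^{5})$ to produce a morphism $\BLP/G\to G\text{-Hilb}_{\BC}(\BP^{5})$. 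The local computation above, via the cover by the open subschemes $(\text{Bl}_{U_{i}^{G}}U_{i})/G$, then shows that this morphism restricts to the previously constructed local isomorphism on each piece, so it is an isomorphism. A more hands-on alternative is to verify directly that the local isomorphisms agree on the overlaps $U_{i}\cap U_{j}$ by comparing universal subschemes in coordinates, which is routine but notationally cumbersome.
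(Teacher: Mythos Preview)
Your proposal is correct and follows essentially the same route as the paper: compute $G\text{-Hilb}_{\BC}(U_i)$ on each $G$-invariant affine chart via Lemma~\ref{Lemma about G-Hilbet scheme} and Proposition~\ref{Propostion 4.7}, then glue. The paper packages the open-immersion claim and the gluing a bit more cleanly by passing to the relative $G$-Hilbert scheme over the quotient, i.e.\ using $G\text{-Hilb}_{\BC}(\BP^{5})\cong G\text{-Hilb}_{\BP^{5}/G}(\BP^{5})$ and then base-changing along $\BA^{5}/G\hookrightarrow \BP^{5}/G$ to obtain a Cartesian square; this makes the compatibility over overlaps automatic from functoriality rather than requiring a separate family construction or coordinate check, but the substance is the same as what you outline.
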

\begin{proof}
According to \cite[Remark 4.19]{blume2007mckay}, if $X^{\prime}$ is a scheme endowed with a $G$-action over $S$ and $S^{\prime}$ is an $S$-scheme, then there is an isomorphism of $S^{\prime}$-functors
\begin{center}
    $(\underline{G\text{-Hilb}}_{S}(X^{\prime}))_{S^{\prime}}\cong \underline{G\text{-Hilb}}_{S^{\prime}}(X^{\prime}_{S^{\prime}})$.
\end{center}
In addition, by \cite[Remark 4.22 (2)]{blume2007mckay}, we have 
\begin{center}
$G\text{-Hilb}_{\BC}(\BP^{5})\cong G\text{-Hilb}_{\BP^{5}/G}(\BP^{5})$ and $G\text{-Hilb}_{\BC}(\BA^{5})\cong G\text{-Hilb}_{\BA^{5}/G}(\BA^{5})$.
\end{center}
Thus we have the following Cartesian diagram:
$$\begin{tikzcd}
G\text{-Hilb}_{\BC}(\BA^{5})\cong G\text{-Hilb}_{\BA^{5}/G}(\BA^{5}) \arrow[dd, "\tau^{\prime}"] \arrow[rr, "g^{\prime}", hook] &  & G\text{-Hilb}_{\BC}(\BP^{5})\cong G\text{-Hilb}_{\BP^{5}/G}(\BP^{5}) \arrow[dd, "\tau"] \\
                                                                  &  &                                                       \\
\BA^{5}/G \arrow[rr, "g", hook]                                      &  & \BP^{5}/G,                                            
\end{tikzcd}$$
where $g$ and $g^{\prime}$ are open immersions. By Lemma \ref{Lemma about G-Hilbet scheme} and Proposition \ref{Propostion 4.7}, we obtain 
\begin{equation*}
 G\text{-Hilb}_{\BC}(\BA^{5})\cong \BA^{2}\times (\text{Bl}_{0}(\BA^{3})/G)\cong \text{Bl}_{\BA^{2}\times 0}(\BA^{5})/G.  
\end{equation*}
By taking other $x_{i}=1$ for $i=1,2,3,4,5$, we can get a Zariski cover of $G\text{-Hilb}_{\BC}(\BP^{5})$. Moreover, the gluing of this cover is completely determined by the open immersions
$\BA^{5}\hookrightarrow \BP^{5}$, and by a direct check, we get $$G\text{-Hilb}_{\mathbb{C}}(\mathbb{P}^{5})\cong (\BLP)/G.$$
\end{proof}
\begin{theorem}\label{Theorem 2}
$G\text{-Hilb}_{\BC}(M)\cong (\text{Bl}_{M^{G}}(M))/G\cong \text{Bl}_{E_{0}\times E_{1}}(\BP^{2}\times \BP^{2})$.
\end{theorem}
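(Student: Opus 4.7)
The plan is to reduce the computation of $G\text{-Hilb}_{\BC}(M)$ to the already-established description of $G\text{-Hilb}_{\BC}(\BP^{5})$ in Proposition \ref{remark 2}, and then combine with the global identification of Theorem \ref{Theorem 1}. The second isomorphism in the statement is exactly Theorem \ref{Theorem 1}, so the work consists of proving the first isomorphism $G\text{-Hilb}_{\BC}(M)\cong \text{Bl}_{M^{G}}(M)/G$ as closed subschemes of $\BLP/G$.

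First, since $M\hookrightarrow \BP^{5}$ is a $G$-equivariant closed immersion, any $G$-cluster on $M$ is canonically a $G$-cluster on $\BP^{5}$ whose defining ideal contains the ideal of $M$. Functorially this gives a closed immersion
\begin{equation*}
G\text{-Hilb}_{\BC}(M)\hookrightarrow G\text{-Hilb}_{\BC}(\BP^{5})\cong \BLP/G,
\end{equation*}
whose image is the scheme of $G$-clusters in $\BP^{5}$ scheme-theoretically contained in $M$. The goal is to show that this image is $\text{Bl}_{M^{G}}(M)/G$, which we already know (by Proposition \ref{Proposition about smoothness of Y}) is a smooth closed subvariety of $\BLP/G$.

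To check this, I would mimic the affine-chart analysis of Propositions \ref{Proposition about smoothness of Y} and \ref{remark 2} patch by patch. On $\Uxyy/G\cong \text{Spec}\,\BC[x_{1},x_{2},x_{3}^{\prime},y_{4},y_{5}]$, one reads off the universal $G$-cluster of $\BP^{5}$ from the identification $G\text{-Hilb}_{\BC}(\BA^{5})\cong \BA^{2}\times (\text{Bl}_{0}(\BA^{3})/G)$ provided by Lemma \ref{Lemma about G-Hilbet scheme} and Proposition \ref{Propostion 4.7}. The condition that this universal cluster is contained in $M$ amounts to the vanishing of the pulled-back cubic equation of $M$, which yields precisely Equation (\ref{Equation 1}) defining $\Uxxyy\subset \Uxyy/G$. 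Gluing over the eighteen standard charts of $\BLP/G$ produces the desired scheme-theoretic identification $G\text{-Hilb}_{\BC}(M)\cong \text{Bl}_{M^{G}}(M)/G$, and Theorem \ref{Theorem 1} then gives the second isomorphism.

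The main obstacle is the explicit tracking of the universal $G$-cluster through the chain $G\text{-Hilb}_{\BC}(\BA^{5})\cong \BA^{2}\times G\text{-Hilb}_{\BC}(\BA^{3})\cong \text{Bl}_{\BA^{2}\times 0}(\BA^{5})/G$: one must verify that, in the $(x_{1},x_{2},x_{3}^{\prime},y_{4},y_{5})$-coordinates, the locus where the universal cluster lies in $M$ is cut out by the single equation $F_{0}(1,x_{1},x_{2})+x_{3}^{\prime}F_{1}(1,y_{4},y_{5})=0$. This is a direct but notation-heavy computation. An alternative route avoiding explicit bookkeeping is to construct a flat family of $G$-clusters in $M$ parametrized by $\text{Bl}_{M^{G}}(M)/G$ (e.g.\ by pulling back the universal family on $\BLP/G$), invoke the universal property of $G\text{-Hilb}_{\BC}(M)$ to get a morphism $\text{Bl}_{M^{G}}(M)/G\to G\text{-Hilb}_{\BC}(M)$, and use smoothness (Proposition \ref{Proposition about smoothness of Y}) together with the birationality statement for the Hilbert–Chow morphism surrounding Theorem \ref{BKR 1} to upgrade it to an isomorphism.
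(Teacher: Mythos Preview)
Your proposal is correct, and its overall architecture matches the paper's: reduce to the ambient $G\text{-Hilb}_{\BC}(\BP^{5})$ via the $G$-equivariant closed immersion $M\hookrightarrow\BP^{5}$, and then identify the resulting closed subscheme with $\text{Bl}_{M^{G}}(M)/G$. The difference lies in how that identification is carried out. You propose to track the universal $G$-cluster through the affine charts and verify chart by chart that the containment-in-$M$ condition cuts out exactly Equation~(\ref{Equation 1}); this works but, as you note, is notation-heavy. The paper instead invokes the base-change property of $G$-Hilbert functors from \cite[Remarks 4.19 and 4.22]{blume2007mckay}, namely $G\text{-Hilb}_{\BC}(M)\cong G\text{-Hilb}_{M/G}(M)$ together with $(\underline{G\text{-Hilb}}_{S}(X'))_{S'}\cong \underline{G\text{-Hilb}}_{S'}(X'_{S'})$, to obtain directly a Cartesian square
\[
\begin{tikzcd}
G\text{-Hilb}_{\BC}(M) \arrow[r,hook] \arrow[d] & G\text{-Hilb}_{\BC}(\BP^{5}) \arrow[d] \\
M/G \arrow[r,hook] & \BP^{5}/G,
\end{tikzcd}
\]
and then identifies the fiber product $(\BLP)/G\times_{\BP^{5}/G}M/G$ with $\text{Bl}_{M^{G}}(M)/G$. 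This sidesteps the explicit universal-cluster bookkeeping entirely. Your alternative route via a flat family and the universal property is also viable and is in fact closer in spirit to the paper's argument than your chart-by-chart primary plan.
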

\begin{proof}
We proceed as in Proposition \ref{remark 2} and we get the Cartesian diagram:
$$\begin{tikzcd}
G\text{-Hilb}_{\BC}(M)\cong G\text{-Hilb}_{M/G}(M) \arrow[dd, "\tau^{\prime}"] \arrow[rr, "f^{\prime}", hook] &  & G\text{-Hilb}_{\BC}(\BP^{5})\cong G\text{-Hilb}_{\BP^{5}/G}(\BP^{5}) \arrow[dd, "\tau"] \\
                                                                  &  &                                                       \\
M/G \arrow[rr, "f", hook]                                      &  & \BP^{5}/G,                                           
\end{tikzcd}$$
where $f$ and $f'$ are closed immersions. On the other hand, by Remark \ref{remark 2}, $G\text{-Hilb}_{\BC}(\BP^{5})\cong \BLP$, so the fiber product in the diagram above can also be computed as 
\begin{equation*}
 (\BLP)/G\times_{\BP^{5}/G} M/G\cong \text{Bl}_{M^{G}}(M)/G.
\end{equation*}
 Hence, we obtain the isomorphism 
 \begin{equation*}
 G\text{-Hilb}_{\BC}(M)\cong (\text{Bl}_{M^{G}}(M))/G\cong \text{Bl}_{E_{0}\times E_{1}}(\BP^{2}\times \BP^{2})  
 \end{equation*}
 that we desired.
\end{proof}
\begin{theorem}\label{Theorem 3}
Let $Y$ be the $G$-Hilbert scheme $G\text{-Hilb}_{\BC}(M)$ and $X$ be the quotient space $M/G$. Then $Y$ is a crepant resolution of $X$. Moreover, we have an isomorphism:
$D^{b}(Y)\cong D^{b}_{G}(M)$.
\end{theorem}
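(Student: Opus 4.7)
The plan is to verify the hypotheses of the Bridgeland--King--Reid theorem (Theorem \ref{BKR 1}) with $n = \dim M = 4$, using the description $Y \cong \text{Bl}_{M^G}(M)/G \cong \text{Bl}_{E_0 \times E_1}(\BP^2 \times \BP^2)$ supplied by Theorem \ref{Theorem 2} and the smoothness of $Y$ from Proposition \ref{Proposition about smoothness of Y}. Two things must be checked: that the stabiliser of every point of $M$ acts through $\text{SL}$ on the tangent space, and that $\dim(Y \times_X Y) \leq 5$.

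For the stabiliser condition, the only non-trivial stabilisers occur at points of $M^G$, where the stabiliser is the full group $G$. At such a point $p$, the tangent space $T_pM$ decomposes into a one-dimensional piece tangent to $M^G$, on which $g$ acts trivially, and a three-dimensional normal piece on which $g$ acts as scalar multiplication by $e^{2\pi i/3}$; the determinant is therefore $(e^{2\pi i/3})^3 = 1$, so $G \subset \text{SL}(T_pM)$.

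For the dimension count, I would exploit that $\tau: Y \to X$ is an isomorphism away from the exceptional locus, whose image in $X$ is $M^G/G \cong M^G$ of dimension $1$ (since $G$ acts trivially on $M^G$). Because $G$ acts on the normal bundle $N_{M^G/M}$ by a common scalar, it acts trivially on $\BP(N_{M^G/M})$; hence the exceptional divisor of $\text{Bl}_{M^G}(M)$ descends unchanged to a $\BP^2$-bundle in $Y$ over $M^G/G$, and every fiber of $\tau$ over a point of $M^G/G$ is $\BP^2$. The fiber product $Y \times_X Y$ then decomposes as the union of the diagonal (dimension $4$) and a $(\BP^2 \times \BP^2)$-bundle over $M^G/G$ (dimension $1 + 4 = 5$), so $\dim(Y \times_X Y) = 5$, meeting the bound exactly. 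Applying Theorem \ref{BKR 1} then gives both conclusions at once. The only subtle step is the observation that the $G$-action on the exceptional divisor is trivial; this is what prevents the dimension count from violating the bound.
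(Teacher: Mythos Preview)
Your proposal is correct and follows essentially the same route as the paper: verify the hypotheses of Theorem~\ref{BKR 1} using the description $Y\cong\text{Bl}_{E_0\times E_1}(\BP^2\times\BP^2)$ from Theorem~\ref{Theorem 2}, then invoke BKR. The paper's proof is terser---it simply notes that $Y$ is irreducible and that $\dim M^G=1$ forces $\dim(Y\times_X Y)\leq 5$---whereas you spell out the $\mathrm{SL}$ condition on tangent spaces and give an explicit fibrewise analysis of $Y\times_X Y$ (diagonal plus a $(\BP^2\times\BP^2)$-bundle over the curve $M^G$), but the content is the same.
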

\begin{proof}
By Theorem \ref{Theorem 2}, $Y\coloneqq G\text{-Hilb}_{\BC}(M)$ is isomorphic to $\text{Bl}_{E_{0}\times E_{1}}(\BP^{2}
\times \BP^{2})$, which is irreducible. Since the dimension of the fixed locus $M^{G}$ is 1, we have $\text{dim}(Y\times_{X} Y)\leq 5$. We take the family $\CZ\subset Y\times_{\BC} M$ to be $\text{Bl}_{M^{G}}(M)$ (see Remark \ref{Remark about Z}) to justify that it is indeed a family) and, via Theorem \ref{BKR 1}, we know that $Y$ is a crepant resolution of $X$ and $D^{b}(X)\cong D^{b}_{G}(M)$.
\end{proof}
\begin{remark}\label{Remark about Z}
In the statement of Theorem \ref{BKR 1}, $\CZ$ is the universal family of $Y\times_{\BC} M$. However, throughout the proof in \cite{0f6d937083584c2ba1ca157da0f5ba0a}, the universality of $\CZ$ never plays a role. We only need $\CZ$ to be a family of $\underline{G\text{-Hilb}}_{\BC}(M)$ over $Y$. We verify this condition for $\CZ\coloneqq \text{Bl}_{M^{G}}(M)$:
We define the map $j\colon\CZ\rightarrow Y\times M$ by $j=p\times q$, where $p\colon \text{Bl}_{M^{G}}(M)\rightarrow Y\cong\text{Bl}_{M^{G}}(M)/G$ is the quotient map and $q\colon \text{Bl}_{M^{G}}(M)\rightarrow M$ is the blow-up morphism. By a direct check, we see that $j$ is a closed immersion.
\end{remark}
We conclude this section with an alternative proof of the fact that $Y$ is a crepant resolution of $X$. Although this is a direct corollary of Theorem \ref{BKR 1}, we can also show this result independently via direct computation.
We present it here for interested readers.

Before stating of the proposition, we need to recall the definition of canonical singularities:
\begin{definition}\cite[Definition 1.1]{reid1985young}\label{Definition 5.10}
Let $X$ be a normal, quasi-projective variety over an algebraic closed field $k$ of characteristic $0$. We say $X$ has \textit{canonical singularities}, if it satisfies the following two conditions:
\begin{enumerate}
    \item The canonical class $K_{X}$ is $\BQ$-Cartier, i.e. for some integer $r\geq 1$, the Weil divisor $rK_{X}$ is Cartier;
    \item For any resolution $f:Y \longrightarrow X$, 
    \begin{center}
        $rK_{Y}= f^{*}(rK_{X})+\sum a_{i}E_{i}$
    \end{center}
    for some $a_{i}\geq 0$, where $E_{i}$'s are exceptional divisors.
\end{enumerate}
\end{definition}
\begin{example}\cite[Proposition 1.7 and Remark 1.8]{reid1980canonical}\label{Lemma 5.11}
If a finite group quotient singularity is Gorenstein, then it is also canonical.
\end{example}

\begin{proposition}
$Y:= (\text{Bl}_{M^{G}}(M))/G \underset{}{\overset{\tau}{\longrightarrow}} X:= M/G$ is a crepant resolution of $X$.
\end{proposition}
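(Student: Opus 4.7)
The plan is to verify $K_Y \cong \tau^*K_X$ by computing $K_{\tilde{M}}$ in two different ways via the commutative diagram
\begin{equation*}
\begin{tikzcd}
\tilde{M} := \text{Bl}_{M^G}(M) \arrow[r, "\pi"] \arrow[d, "q"'] & M \arrow[d, "p"] \\
Y \arrow[r, "\tau"] & X.
\end{tikzcd}
\end{equation*}
By Proposition \ref{Proposition about smoothness of Y}, $Y$ is smooth, and $\tau$ is birational since $M^G$ has codimension $3$ in $M$. So it remains to show the discrepancy vanishes.

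First I will check that at each point $p\in M^G$ the stabilizer $G$ acts on $T_pM$ as a subgroup of $\text{SL}(T_pM)$. Working in the affine chart $x_0 = 1$ near $p\in E_0\times 0$, the $G$-action on $T_p\mathbb{A}^5$ has eigenvalues $1,1,\zeta,\zeta,\zeta$ with $\zeta = e^{2\pi i/3}$, and $T_pM\subset T_p\mathbb{A}^5$ is the hyperplane cut out by $dF|_p$. Since all partial derivatives of $F_1$ vanish at $x_3=x_4=x_5=0$, this differential equals $dF_0|_p$, which lies in the span of $dx_1,dx_2$; hence the induced $G$-action on $T_pM$ has eigenvalues $1,\zeta,\zeta,\zeta$ with determinant $\zeta^3 = 1$. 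The analysis at points of $0\times E_1$ is symmetric. As recalled in the discussion preceding Theorem \ref{BKR 1}, this forces $X$ to have Gorenstein singularities, so $K_X$ is Cartier; moreover, since $M^G$ has codimension $3$, the quotient $p$ is étale in codimension $1$, yielding $p^*K_X = K_M$.

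Next, the blow-up formula for a smooth center of codimension $3$ gives
\begin{equation*}
K_{\tilde{M}} = \pi^*K_M + 2E,
\end{equation*}
where $E$ is the exceptional divisor. From the local chart computation in Proposition \ref{Proposition about smoothness of Y}, the $G$-fixed locus on $\tilde{M}$ coincides with $E$, and $q\colon\tilde{M}\to Y$ is a degree-$3$ cyclic cover totally ramified along $E$ with $q^*E' = 3E$, where $E'$ denotes the image of $E$ in $Y$. The Riemann–Hurwitz formula therefore yields
\begin{equation*}
K_{\tilde{M}} = q^*K_Y + 2E.
\end{equation*}

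Combining the two expressions gives $q^*K_Y = \pi^*K_M$. Using $p^*K_X = K_M$ and the commutativity $p\circ\pi = \tau\circ q$, we rewrite the right-hand side as $q^*\tau^*K_X$, hence $q^*K_Y = q^*\tau^*K_X$. Since $q$ is a finite surjective morphism between smooth varieties, $q^*\colon\text{Pic}(Y)\to\text{Pic}(\tilde{M})$ is injective, whence $K_Y \cong \tau^*K_X$ as required. The delicate point is the exact cancellation of the two $+2E$ contributions: this numerical coincidence is the geometric shadow of the $G\subset\text{SL}(T_pM)$ condition verified above, consistent with Example \ref{Lemma 5.11}, which states that Gorenstein quotient singularities are canonical.
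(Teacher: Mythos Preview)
Your proof follows essentially the same strategy as the paper's: compute $K_{\tilde{M}}$ in two ways via the commutative square (blow-up formula along one edge, Riemann--Hurwitz for the degree-$3$ cyclic quotient along the other) and compare. The only difference is the concluding step: the paper writes $K_Y = \tau^*K_X + \sum a_iE_i$ with $a_i\ge 0$ by invoking canonical singularities (Example~\ref{Lemma 5.11}), pulls back by $q$, and uses effectivity of $q^*E_i$ to force all $a_i=0$; you instead appeal directly to injectivity of $q^*$ on Picard groups.

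One caveat on that last step: the bare assertion that $q^*\colon\text{Pic}(Y)\to\text{Pic}(\tilde{M})$ is injective for any finite surjection of smooth varieties is false in general (an \'etale double cover kills the defining $2$-torsion bundle). Here it holds because $\text{Pic}(Y)\cong\BZ E'\oplus\BZ H_1\oplus\BZ H_2$ is torsion-free, so that the relation $q_*q^* = \deg(q)\cdot\text{id}$ on divisor classes forces injectivity; you should say so explicitly.
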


\begin{proof}
Recall the commutative diagram from our previous discussion:
\begin{center}

\begin{tikzcd}
\CZ= \text{Bl}_{M^{G}}(M)\arrow[rr, "q"] \arrow[dd, "p"]      &  & M \arrow[dd, "\pi"] \\
                                         &  &                     \\
Y= \text{Bl}_{M^{G}}(M)/G\arrow[rr, "\tau"] &  & X:=M/G.             
\end{tikzcd}
\end{center}
By a direct check, the stabiliser subgroup of any point $x\in M$ acts on the tangent space $T_{x}M$ as a subgroup of $\text{SL}(T_{x}M)$, hence $X$ has only Gorenstein singularities (see \cite[Introduction]{0f6d937083584c2ba1ca157da0f5ba0a}). We write  $U:=\text{im}\,\pi(M \backslash M^{G})$ and $l_{*}\omega_{U}$ becomes a line bundle, where $l\colon U\hookrightarrow X$ is the natural open immersion, then the canonical line bundle $\omega_{X}= l_{*}\omega_{U}$. Since $G$ acts on $U$ freely, the morphism $\pi|_{\pi^{-1}(U)}$ is \'etale and we have $\pi^{*}\omega_{U}\cong w_{M}|_{\pi^{-1}(U)}$. Moreover, $M$ is smooth and the codimension of $M^{G}$ is 3, so we get $\pi^{*}K_{X}\cong K_{M}$. By the fact that $q$ is a birational morphism, we have
\begin{equation*}
 K_{\CZ}\cong q^{*}K_{M}+ 2E, 
\end{equation*}
where $E$ is the exceptional divisor. 

By Definition \ref{Definition 5.10} and Example \ref{Lemma 5.11}, we know that X has canonical singularities, so $K_{Y}=\tau^{*}K_{X}+\sum a_{i}E_{i}$, where $E_{i}$ are prime effective divisors of $Y$ and $a_{i}\geq 0$ for all $i$. The next step is to calculate $p^{*}K_{Y}$. Since $p$ is a finite morphism, by the adjunction formula, we know that 
\begin{equation*}
 K_{\CZ}=p^{*}K_{Y}+ \Tilde{E}, 
\end{equation*}
 where $\Tilde{E}$ is the ramification divisor of $p$. By taking $x_{0}=y_{0}=y_{3}=1$, the associated ring map of $p$ is
 \begin{align*}
 \frac{\BC[x_{1},x_{2},x_{3}^{3},y_{4},y_{5}]}{(F_{0}(1,x_{1},x_{2})+x_{3}^{3}F_{1}(1,y_{4},y_{5}))}\longrightarrow 
 \frac{\BC[x_{1},x_{2},x_{3},y_{4},y_{5}]}{( F_{0}(1,x_{1},x_{2})+x_{3}^{3}F_{1}(1,y_{4},y_{5}))}.
 \end{align*}
Here we use the coordinates appearing in the proof of Proposition \ref{Proposition about smoothness of Y}. Then the ramification divisor $\Tilde{E}$ is the support of $\Omega_{\CZ/Y}|_{x_{0}=y_{0}=y_{3}=1}$, which is $2V(x_{3})= 2(E_{0}\times 0)|_{x_{0}=1}\times \BA_{\BC}^{2}$. By similar computations, it is not hard to verify that the ramification divisor is $\Tilde{E}=2E$, where $E$ is the exceptional divisor of $q$. Moreover, since the ring map
\begin{equation*}
\BC[x_{1},x_{2},x_{3}^{3},y_{4},y_{5}]\longrightarrow \BC[x_{1},x_{2},x_{3},y_{4},y_{5}]    
\end{equation*}
is flat and flat morphism is stable under base change, we obtain $p$ is flat. 
Hence the pullback $p^{*}(E_{i})$ is still an effective divisor, which is determined by $p^{-1}(E_{i})$. Since $q^{*}(\pi^{*}K_{X})=p^{*}(\tau^{*}K_{X})$, we have
\begin{equation*}
 K_{\CZ}+2E= K_{\CZ}+2E-\sum_{i} a_{i}p^{-1}(E_{i}),   
\end{equation*}
hence $a_{i}$ has to be $0$ for all $i$. This proves $\tau$ is indeed a crepant resolution of $X$.
\end{proof}
\section{Step II: The equivariant Kuznetsov component of M}
The goal of this section is to identify the equivariant Kuznetsov component $\mathcal{K}u_{M}^{G}$ of $D^{b}_{G}(M)$, which we define in Section \ref{Section about overview of Step 2}, with $D^{b}(E_{0}\times E_{1})$.
\subsection{An overview of the strategy of Step II}\label{Section about overview of Step 2}
The purpose of this section is to review some necessary notions and explain the main ideas to achieving our goal.

The derived category $D^{b}(M)$ of the cubic fourfold $M$ admits a semiorthogonal decomposition, which was first introduced by Alexander Kuznetsov in \cite{kuznetsov2007homological}:
\begin{equation}
  D^{b}(M)= \langle \mathcal{K}u_{M},\CO_{M},\CO_{M}(1),  \CO_{M}(2)\rangle,  
\end{equation}
where the Kuznetsov component $Ku_{M}$ is by definition the orthogonal complement $\langle \CO_{M},\CO_{M}(1),$ $\CO_{M}(2) \rangle^{\bot}$.
Furthermore, by Theorem \ref{Equivariant Derived category Theorem}, the equivariant derived category $D^{b}_{G}(M)$ also admits a semiorthogonal decomposition:
\begin{equation}\label{equation 10}
  D^{b}_{G}(M)=\langle \mathcal{K}u_{M}^{G}, \langle\CO_{M}\rangle^{G},\langle\CO_{M}(1)\rangle^{G},\langle\CO_{M}(2)\rangle^{G}\rangle, 
\end{equation}
where the equivariant Kuznetsov component $\mathcal{K}u_{M}^{G}$ by definition the equivariant triangulated category of $\mathcal{K}u_{M}$.

On the other hand, from Theorem \ref{Theorem 2}, we know $Y\cong \text{Bl}_{E_{0}\times E_{1}}(\BP^{2}\times \BP^{2})$. By Orlov's blow-up formula \cite{Orlov_1993} and \cite[Proposition 11.16 and 11.18]{huybrechts2006fourier}, we have 
\begin{equation}\label{}
   D^{b}(Y)=\langle D_{-1}, D_{0}\rangle. 
\end{equation}
where $D_{-1}\cong D^{b}(E_{0}\times E_{1})$ and $D_{0}\cong D^{b}(\BP^{2}\times \BP^{2})$. (See Section \ref{Section 5.2} below for definitions of equivalences.)

Recall from Theorem \ref{BKR 1} that we have the following commutative diagram:
\begin{center}
\begin{tikzcd}
                      & \CZ \arrow[ld, "p"'] \arrow[rd, "q"] &                     \\
Y \arrow[rd, "\tau"'] &                                      & M \arrow[ld, "\pi"] \\
                      & X
                      &                    .
\end{tikzcd}   
\end{center}
Here $\CZ$ is $\text{Bl}_{M^{G}}(M)$, as explained in Remark \ref{Remark about Z}.
For the purpose of computation, we will use the functor in the opposite direction with the same Fourier-Muaki kernel of $\Phi$, namely
\begin{center}
$\Psi\coloneqq p_{*}\circ \textbf{L}q^{*}\colon D_{G}^{b}(M)\longrightarrow D^{b}(Y)$.    
\end{center}
Since the morphism $p$ is finite, in particular affine, $p_{*}$ is an exact functor. This justifies writing $p_{*}$ instead of $\textbf{R}p_{*}$.
By computing the image of the three components $\langle\CO_{M}\rangle^{G},\langle\CO_{M}(1)\rangle^{G}$ and $\langle\CO_{M}(2)\rangle^{G}$ of $D^{b}_{G}(M)$ under $\Psi$ and using mutation functors, we can finally find an explicit functor to identify $\mathcal{K}u_{M}^{G}$ with $D^{b}(E_{0}\times E_{1})$.
\subsection{$D^{b}(Y)$, $D_{G}^{b}(M)$ and the functor $\Psi$}\label{Section 5.2}
We first need a refined semiorthogonal decompositions of $D^{b}(Y)$ and $D^{b}_{G}(M)$.

Following \cite{Orlov_1993} and \cite[Proposition 11.16 and 11.18]{huybrechts2006fourier} for the further decomposition of $D^{b}(Y)$, we utilize the diagram
\begin{center}
\begin{tikzcd}
                  & E' \arrow[ld, "\pi'"'] \arrow[rd, "i'", hook] &             \\
E_{0}\times E_{1} &                                             & Y=\text{Bl}_{E_{0}\times E_{1}}(\BP^{2}\times \BP^{2})
\end{tikzcd}
\end{center}
where $E^{\prime}$ is the exceptional divisor of the blow-up $p^{\prime}:=Y\longrightarrow \BP^{2}\times\BP^{2}$. The morphism $i^{\prime}$ is the closed immersion from $E^{\prime}$ to $Y$ and $\pi^{\prime}$ is the natural projection. The subcategory $D_{-1}$ is given by $\phi_{-1}(D^{b}(E_{0}\times E_{1}))$, where
\begin{center}
$\phi_{-1}:= i_{*}\circ(\CO_{E'}(-E')\otimes (\cdot))\circ \pi^{*}\colon D^{b}(E_{0}\times E_{1})\longrightarrow D^{b}(Y)$. 
\end{center}
 Moreover, $D_{-1}$ is equivalent to $D^{b}(E_{0}\times E_{1})$ by $\phi_{-1}$. $D_{0}$ is given by the image of $(p^{\prime})^{*}D^{b}(\BP^{2}\times \BP^{2})$ and equivalent to $D^{b}(\BP^{2}\times \BP^{2})$ via $(p^{\prime})^{*}$. In particular, $\langle D_{-1},D_{0}\rangle$ is a semiorthogonal decomposition of $D^{b}(Y)$.

Consider the diagram
\begin{center}
\begin{tikzcd}
\BP^{2}\times \BP^{2} \arrow[dd, "p_{1}"] \arrow[rr, "p_{2}"] &  & \BP^{2} \\
                                                                &  &         \\
\BP^{2}                                                         &  &        
\end{tikzcd} 
\end{center}
where $p_{i}$ is the projection to the $i$-th factor. By \cite{Orlov_1993} and \cite[Proposition 8.28 and Corollary 8.36]{huybrechts2006fourier}, we get that
\begin{align*}
D_{0}\cong D^{b}(\BP^{2}\times \BP^{2})=
& \langle p_{2}^{*}D^{b}(\BP^{2})\otimes \CO_{\BP^{2}\times \BP^{2}},p_{2}^{*}D^{b}(\BP^{2})\otimes p_{1}^{*}\CO_{\BP^{2}}(1),\\
& p_{2}^{*}D^{b}(\BP^{2})\otimes p_{1}^{*}\CO_{\BP^{2}}(2)\rangle.   
\end{align*}
Moreover, by \cite{Beilinson1978CoherentSO}, we know that $D^{b}(\BP^{2})=\langle \CO_{\BP^{2}},\CO_{\BP^{2}}(1),\CO_{\BP^{2}}(2)\rangle$, so $D^{b}(\BP^{2}\times \BP^{2})$ has the decomposition:
\begin{equation}\label{Equation 11}
\begin{split}
D^{b}(\BP^{2}\times \BP^{2})=  \langle \CO_{\BP^{2}\times \BP^{2}}(0,0),\CO_{\BP^{2}\times \BP^{2}}(0,1),\CO_{\BP^{2}\times \BP^{2}}(0,2),\CO_{\BP^{2}\times \BP^{2}}(1,0),\\\CO_{\BP^{2}\times \BP^{2}}(1,1), \CO_{\BP^{2}\times \BP^{2}}(1,2),\CO_{\BP^{2}\times \BP^{2}}(2,0),\CO_{\BP^{2}\times \BP^{2}}(2, 1),\CO_{\BP^{2}\times \BP^{2}}(2,2)\rangle,  
\end{split}
\end{equation}
where $\CO_{\BP^{2}\times \BP^{2}}(i,j)$ denotes $p_{1}^{*}\CO_{\BP^{2}}(i)\otimes p_{2}^{*}\CO_{\BP^{2}}(j)$, and we write $\CO_{Y}(0,i,j)$ for $(p^{\prime})^{*}(p_{1}^{*}\CO_{\BP^{2}}(i)\otimes p_{2}^{*}\CO_{\BP^{2}}(j))$ for convenience.

Combining the decomposition of $D_{-1}$ and \ref{Equation 11}, we obtain a finer semiorthogonal decomposition of $D^{b}(Y)\colon$
\begin{equation}\label{equation 12}
    \begin{split}
       D^{b}(Y)= & \langle D_{-1},\CO_{Y}(0,0,0),\CO_{Y}(0,0,1),
       \CO_{Y}(0,0,2),
       \CO_{Y}(0,1,0),\\
      &\CO_{Y}(0,1,1),\CO_{Y}(0,1,2),\CO_{Y}(0,2,0),\CO_{Y}(0,2, 1),\CO_{Y}(0,2,2)\rangle.
    \end{split}
\end{equation}

At the same time, we know 
\begin{equation*}
D^{b}_{G}(M)=\langle \mathcal{K}u_{M}^{G},\langle\CO_{M}\rangle^{G},\langle\CO_{M}(1)\rangle^{G}\langle\CO_{M}(2)\rangle^{G}\rangle.   
\end{equation*}
 And by \ref{KP16}, we get
 \begin{center}
$\langle\CO_{M}(n)\rangle^{G}=\langle \CO_{M}(n)\otimes V_{0},\CO_{M}(n)\otimes V_{1},\CO_{M}(n)\otimes V_{2}$ $\rangle$,     
 \end{center}
where $V_{i}$'s range over irreducible representations of $G$. For $G=\BZ/3\BZ$, they are given by the characters $\chi_{0}$, $\chi_{1}$ and $\chi_{2}$.

By the discussion above, we have the following semiorthogonal decomposition of $D^{b}_{G}(M)$:
\begin{equation}\label{equation 13}
\begin{split}
    & D^{b}_{G}(M)=  \langle \mathcal{K}u_{M}^{G}, \CO_{M}\otimes \chi_{0}, \CO_{M}\otimes \chi_{1},
    \CO_{M}\otimes \chi_{2}, \CO_{M}(1)\otimes \chi_{0},\\
    & \CO_{M}(1)\otimes \chi_{1},
     \CO_{M}(1)\otimes \chi_{2}, \CO_{M}(2)\otimes \chi_{0},\CO_{M}(2)\otimes \chi_{1},\CO_{M}(2)\otimes \chi_{2} \rangle.
\end{split} 
\end{equation}
And $\CO_{M}(i)\otimes \chi_{0}$ is taken with the canonical $G$-structure such that 
\begin{equation*}
H^{0}(M,\CO_{M}(1))= \bigoplus\limits_{k=0}^{5} \BC x_{k}= \BC^{3}\chi_{0}\bigoplus \BC^{3}\chi_{1}.
\end{equation*}
We observe that besides subcategories $D^{b}(E_{0}\times E_{1})\subset D^{b}(Y)$ and $\mathcal{K}u_{M}^{G}\subset D^{b}_{G}(M)$, that we are after, there are exactly nine terms in the semiorthogonal decomposition \eqref{Equation 11} and \eqref{equation 13} both for $D^{b}(Y)$ and $D^{b}_{G}(M)$. Using the functor $\Phi$ of \ref{BKR 1} to identify these nine terms with each other on both sides seems to be the most natural idea. However, for the purpose of practical computation, it is better to use 
\begin{center}
$\Psi\coloneqq p_{*}\circ \textbf{L}q^{*}:D^{b}_{G}(M)\longrightarrow D^{b}(Y)$
\end{center}
instead. Here $\textbf{L}q^{*}\colon D^{b}_{G}(M)\longrightarrow D^{b}_{G}(\CZ)$ is the derived functor induced by the $G$-equivariant map $q\colon \CZ\rightarrow M$, and $p_{*}$ is the composition
\begin{center}
$D^{b}_{G}(\CZ)\underset{}{\overset{p_{*}^{G}}{\longrightarrow}} D^{b}_{G}(Y)\underset{}{\overset{[\cdot]^{G}}{\longrightarrow}} D^{b}(Y)$,    
\end{center}
where $p_{*}^{G}$ denotes the equivariant pushforward and $G$ acts trivially on $Y$.

The functor $\Psi$ shares the same Fourier-Mukai kernel with $\Phi$. Moreover, since $\Phi$ is an equivalence, so is $\Psi$. (See \cite[Proposition 2.9]{article}) 
\subsection{Computing $\Psi(\CO_{M}(i)\otimes \chi_{j})$}

For the reader's convenience, we recall the commutative diagram defined in \ref{BKR 1}:
\begin{center}
\begin{tikzcd}
                      & \CZ\coloneqq \text{Bl}_{M^{G}}(M) \arrow[ld, "p"'] \arrow[rd, "q"] &                     \\
Y \arrow[rd, "\tau"'] &                                      & M \arrow[ld, "\pi"] \\
                      & X
                      &                    .
\end{tikzcd}   
\end{center}
The pullback $\textbf{L}q^{*}$ of $\CO_{M}(i)\otimes \chi_{j}$ is still an equivariant line bundle and the equivariant pushforward $p_{*}^{G}(\textbf{L}q^{*}(\CO_{M}(i)\otimes \chi_{j}))$ is a rank $3$ equivariant vector bundle since $p$ is flat and finite of degree $3$. Therefore, after taking $G$-invariants, we have the following result
\begin{lemma}
$\Psi(\CO_{M}(i)\otimes \chi_{j})$ is a line bundle on $Y$.
\end{lemma}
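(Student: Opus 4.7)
The plan is to use the observation just made in the excerpt: namely, $\CE \coloneqq p_{*}^{G}(\textbf{L}q^{*}(\CO_{M}(i)\otimes \chi_{j}))$ is a rank-$3$ $G$-equivariant vector bundle on $Y$, where $G$ acts trivially on $Y$. What remains is to show that taking $G$-invariants of such an $\CE$ yields a locally free sheaf of rank one. I would split this into two parts: first local freeness of $\CE^{G}$, then the computation of its rank.

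For local freeness, I would invoke the fact that $|G|=3$ is invertible in $\BC$, so the Reynolds operator $e \coloneqq \tfrac{1}{3}\sum_{g\in G} g$ is an idempotent endomorphism of $\CE$ whose image is precisely $\CE^{G}$. Thus $\CE^{G}$ is a direct summand of the locally free sheaf $\CE$, and since a direct summand of a finitely generated locally free sheaf on a Noetherian scheme is again locally free (stalkwise, a direct summand of a free module over a local ring is free), $\CE^{G}$ is a vector bundle on the smooth connected variety $Y$.

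For the rank, I would restrict to the dense open locus $Y^{\circ}\subset Y$ lying over $M\setminus M^{G}$, where $G$ acts freely on $\CZ$ and $p$ becomes an étale $G$-torsor of degree $3$. Over $Y^{\circ}$, descent along $p$ identifies $q^{*}(\CO_{M}(i)\otimes \chi_{j})$ with a line bundle $L^{\prime}$ on $Y^{\circ}$ equipped with the character $\chi_{j}$, and a direct computation gives $\CE|_{Y^{\circ}}\cong L^{\prime}\otimes_{\BC}\BC[G]$ with the corresponding twisted $G$-action, whose invariants form the one-dimensional $\chi_{j}$-isotypic component isomorphic to $L^{\prime}$. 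Hence $\CE^{G}$ has rank one on a dense open of the connected variety $Y$, and by constancy of rank for a locally free sheaf on a connected scheme, it is a line bundle everywhere. The only subtlety is that a naive pointwise invariants calculation could misbehave at the exceptional locus $E^{\prime}$, where $p$ is ramified; the direct-summand argument above is what avoids a case analysis there and lets local freeness on $Y^{\circ}$ propagate to all of $Y$.
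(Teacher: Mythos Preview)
Your proof is correct. The local-freeness step is essentially the paper's: both exhibit $\Psi(\CO_{M}(i)\otimes\chi_{j})$ as a direct summand of the rank-$3$ vector bundle $p_{*}^{G}\bigl(\textbf{L}q^{*}(\CO_{M}(i)\otimes\chi_{j})\bigr)$, you via the Reynolds idempotent, the paper via the isotypic decomposition of an equivariant sheaf on a trivially-acted space. The rank computation, however, is handled differently. The paper notes that the three sheaves $\Psi(\CO_{M}(i)\otimes\chi_{j})$ for $j=0,1,2$ are precisely the three isotypic summands of a single rank-$3$ bundle, and then uses that $\Psi$ is an equivalence to conclude none of them is zero, forcing each to have rank one. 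You instead restrict to the free locus $Y^{\circ}$, where $p$ is a $G$-torsor and the fibres of $p_{*}^{G}$ carry the regular representation, so the invariants are visibly one-dimensional; connectedness of $Y$ then propagates this. Your route is more self-contained in that it does not appeal to the equivalence of $\Psi$, while the paper's is a line shorter given that the equivalence has already been established. One small wording slip: what you extract by taking $G$-invariants of $\CE$ is its $\chi_{0}$-isotypic piece, not the ``$\chi_{j}$-isotypic component''; the twist by $\chi_{j}$ has already been absorbed into the equivariant structure of $\CE$. The conclusion is unaffected.
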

\begin{proof}
$G$ acts trivially on $Y$, by \cite[Section 4.2]{0f6d937083584c2ba1ca157da0f5ba0a}, we know
\begin{equation*}
p_{*}^{G}(\textbf{L}q^{*}(\CO_{M}(i)\otimes \chi_{0}))\cong \bigoplus\limits_{j=0}^{3} p_{*}\textbf{L}q^{*}(\CO_{M}(i)\otimes \chi_{0})\otimes \chi_{j}.   
\end{equation*}
It follows from Proposition \ref{technical and important proposition} that
\begin{equation*}
 p_{*}\textbf{L}q^{*}(\CO_{M}(i)\otimes \chi_{0})\otimes \chi_{j}\cong  p_{*}\textbf{L}q^{*}(\CO_{M}(i)\otimes \chi_{j}).   
\end{equation*}
Hence,
\begin{equation*}
p_{*}^{G}(\textbf{L}q^{*}(\CO_{M}(i)\otimes \chi_{0}))\cong \bigoplus\limits_{j=0}^{3} p_{*}\textbf{L}q^{*}(\CO_{M}(i)\otimes \chi_{j})= \bigoplus\limits_{j=0}^{3} \Psi(\CO_{M}(i)\otimes \chi_{j}).
\end{equation*}
Since $\Psi(\CO_{M}(i)\otimes \chi_{j})$ is a direct summand of the vector bundle $p_{*}^{G}(\textbf{L}q^{*}(\CO_{M}(i)\otimes \chi_{0}))$, it is a locally free sheaf.
Moreover, $\Psi$ is an equivalence, so $\Psi(\CO_{M}(i)\otimes \chi_{j})$ is not $0$ and must be a line bundle.
\end{proof}
 By Theorem \ref{Theorem 2}, we know that $Y$ is isomorphic to $\text{Bl}_{E_{0}\times E_{1}}(\BP^{2}\times \BP^{2})$, which is smooth over $\BC$. So we know that this line bundle on $Y$ is uniquely determined by its corresponding Weil divisor. The Picard group of $\text{Bl}_{E_{0}\times E_{1}}(\BP^{2}\times \BP^{2})$ also has an explicit characterisation. 
 \begin{proposition}
 Let $H_{i}$ denote the divisor of the pullback $p_{i}^{*}(\CO_{\BP^{2}}(1))$ for $i=1,2$ and $E^{\prime}$ denote the exceptional divisor of the blow-up $\text{Bl}_{E_{0}\times E_{1}}(\BP^{2}\times \BP^{2})$. Then
 \begin{center}
 $\text{Pic}(Y)\cong  \BZ E^{\prime}\bigoplus \text{Pic}(\BP^{2}\times \BP^{2})\cong \BZ E^{\prime}\bigoplus \BZ H_{1}\bigoplus \BZ H_{2}$.    
 \end{center}
 \end{proposition}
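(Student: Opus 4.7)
The plan is to deduce the statement from two standard ingredients: the blow-up formula for Picard groups of smooth projective varieties, and the Künneth-type description of $\mathrm{Pic}(\mathbb{P}^2\times\mathbb{P}^2)$. Since $E_0\times E_1$ is a smooth irreducible subvariety of $\mathbb{P}^2\times\mathbb{P}^2$ of codimension two (each $E_i$ is a smooth curve in $\mathbb{P}^2$), the blow-up $p'\colon Y\to\mathbb{P}^2\times\mathbb{P}^2$ is the blow-up of a smooth projective variety along a smooth projective subvariety of codimension $\geq 2$, so the general theory applies directly.

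First I would invoke the standard result (see \cite[Exercise II.8.5]{hartshorne2013algebraic}) that for such a blow-up one has a split short exact sequence
\begin{equation*}
0\longrightarrow \mathbb{Z}\cdot[E']\longrightarrow \mathrm{Pic}(Y)\longrightarrow (p')^{*}\mathrm{Pic}(\mathbb{P}^{2}\times\mathbb{P}^{2})\longrightarrow 0,
\end{equation*}
where the splitting is given by the pullback $(p')^{*}$. Equivalently, $\mathrm{Pic}(Y)\cong \mathbb{Z} E'\oplus (p')^{*}\mathrm{Pic}(\mathbb{P}^{2}\times\mathbb{P}^{2})$. Since $(p')^{*}$ is injective on Picard groups (because $E_{0}\times E_{1}$ has codimension $\geq 2$, so $(p')_{*}\mathcal{O}_{Y}=\mathcal{O}_{\mathbb{P}^{2}\times\mathbb{P}^{2}}$ and $(p')^{*}$ followed by $(p')_{*}$ recovers the original line bundle), the second summand is identified with $\mathrm{Pic}(\mathbb{P}^{2}\times\mathbb{P}^{2})$ itself.

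Next I would compute $\mathrm{Pic}(\mathbb{P}^{2}\times\mathbb{P}^{2})$. Using the projection $p_{i}\colon\mathbb{P}^{2}\times\mathbb{P}^{2}\to\mathbb{P}^{2}$ and the fact that $\mathrm{Pic}(\mathbb{P}^{2})\cong\mathbb{Z}$ generated by $\mathcal{O}_{\mathbb{P}^{2}}(1)$, together with the Künneth formula for Picard groups of products of smooth projective varieties with $H^{1}(\mathcal{O})=0$ (see \cite[Exercise III.12.6]{hartshorne2013algebraic} or \cite[§III.12]{hartshorne2013algebraic}), one obtains $\mathrm{Pic}(\mathbb{P}^{2}\times\mathbb{P}^{2})\cong p_{1}^{*}\mathrm{Pic}(\mathbb{P}^{2})\oplus p_{2}^{*}\mathrm{Pic}(\mathbb{P}^{2})\cong\mathbb{Z} H_{1}\oplus\mathbb{Z} H_{2}$. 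Combining these two computations yields the claimed isomorphism.

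I do not expect any serious obstacle here; the result is essentially a citation of standard facts. The only minor subtlety to watch is the codimension hypothesis on the blow-up centre (which is needed both for the splitting above and for $(p')^{*}$ to be injective), but this is immediate since $\mathrm{codim}_{\mathbb{P}^{2}\times\mathbb{P}^{2}}(E_{0}\times E_{1})=2$.
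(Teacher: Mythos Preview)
Your proposal is correct and takes essentially the same approach as the paper: both reduce to the standard blow-up formula for Picard groups together with the computation of $\mathrm{Pic}(\BP^{2}\times\BP^{2})$. The paper simply cites \cite[Chapter II, Exercises 7.9 and 8.5]{hartshorne2013algebraic}, and your argument unpacks exactly these two ingredients (with Exercise III.12.6 in place of II.7.9 for the product); the only cosmetic quibble is that in your short exact sequence the third term should be $\mathrm{Pic}(\BP^{2}\times\BP^{2})$ rather than its image under $(p')^{*}$, but this does not affect the argument.
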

\begin{proof}
By \cite[Chapter II Exercises 7.9 and 8.5]{hartshorne2013algebraic}.
\end{proof}

We take $\BP^{1}$ as a fiber of $E^{\prime}\rightarrow E_{0}\times E_{1}$ and a line in $\BP^{2}$. The image $\Psi(\CO_{M}(i)\times \chi_{j})|_{\BP^{1}}$ for different $\BP^{1}$'s can then be computed in order to get degree of the corresponding divisor, i.e. $E,H_{1}\text{ and } H_{2}$. In the end we can then recover all images $\Psi(\CO_{M}(i)\otimes \chi_{j})$ in terms of divisors.
\begin{definition}\label{definition about a,b,c}
We define three integers $a_{i,j}, b_{i,j}, c_{i.j}$ which satisfy $\Psi(\CO_{M}(i)\otimes \chi_{j}) = \CO_{M}(a_{i,j}E^{\prime}+b_{i,j}H_{1}+c_{i,j}H_{2})$. 
\end{definition}

According to the strategy, we split our computation into three parts.
\subsubsection{The coefficients $a_{i,j}$}\label{Subsubsection 6.3.1}
Recall from the proof of Theorem \ref{Theorem 2}, that
the blow-up $\text{Bl}_{E_{0}\times E_{1}}\\(\BP^{2}\times \BP^{2})$ is cut out by
\begin{equation}\label{Eqaution of Y'}
\Tilde{y}_{1}F_{0}(\Tilde{x}_{0},\Tilde{x}_{1},\Tilde{x}_{2})+\Tilde{y}_{0}F_{1}(\Tilde{x}_{3},\Tilde{x}_{4},\Tilde{x}_{5})=0   
\end{equation}
in $\BP^{2}\times \BP^{2}\times \BP^{1}$, where $[\Tilde{x}_{0}:\Tilde{x}_{1}:\Tilde{x}_{2}]\times[\Tilde{x}_{3}:\Tilde{x}_{4}:\Tilde{x}_{5}]\times [\Tilde{y}_{0}:\Tilde{y}_{1}]$ are coordinates of $\BP^{2}\times \BP^{2}\times \BP^{1}$. Take a point $x\in E_{0}\times E_{1}$. Then we have the following fiber product diagram:
\begin{center}
\begin{tikzcd}
\BP^{1}_{x} \arrow[rrr, hook] \arrow[dd] &  &  & \text{Bl}_{E_{0}\times E_{1}}(\BP^{2}\times \BP^{2}) \arrow[dd, "p^{\prime}"] \\
                                         &  &  &                                                                               \\
x \arrow[rrr, "j^{\prime}", hook]        &  &  & \BP^{2}\times \BP^{2},                                                        
\end{tikzcd}   
\end{center}
where $j^{\prime}$ is a closed immersion and $p^{\prime}$ denotes the blow-up morphism introduced in Section \ref{Section 5.2}.

Since $x\in E_{0}\times E_{1}$, it is not hard to see that $\BP^{1}_{x}$ is a closed subscheme of the exceptional divisor $E^{\prime}$. Without loss of generality, we use
\begin{center}
$[1:\Tilde{x}_{1}^{\prime}:\Tilde{x}_{2}^{\prime}]\times [1:\Tilde{x}_{4}^{\prime}:\Tilde{x}_{5}^{\prime}]$   
\end{center}
to denote the coordinates of $x$. By pluging the coordinates of $x$ into the Equation \eqref{Eqaution of Y'}, we get the coordinates 
\begin{center}
$[1:\Tilde{x}_{1}^{\prime}:\Tilde{x}_{2}^{\prime}]\times [1:\Tilde{x}_{4}^{\prime}:\Tilde{x}_{5}^{\prime}]\times[\Tilde{y}_{0}:\Tilde{y}_{1}]$ of $\BP^{1}_{x}$.    
\end{center}

Recall that the morphism  $p\colon\CZ\coloneqq\text{Bl}_{M^{G}}(M)\rightarrow \text{Bl}_{E_{0}\times E_{1}}(\BP^{2}\times \BP^{2})$ is finite, flat and of degree $3$. Using the isomorphism in the proof of Theorem \ref{Theorem 2}, we can compute the preimage  $p^{-1}(\BP^{1}_{x})$ of $\BP^{1}_{x}$, even explicitly in coordinates. 

Recall the isomorphism: $\Uxxyy\cong W_{0,3,1}$ given by 
\begin{equation}\label{coordinate map 1'}
(x_{1},x_{2},y_{4},y_{5},x_{3}^{\prime})\mapsto 
 (\Tilde{x}_{1},\Tilde{x}_{2},\Tilde{x}_{4},\Tilde{x}_{5},\Tilde{y}_{0}) 
\end{equation}
from the proof of Theorem \ref{Theorem 2}. If we restrict the morphism $p$ to the affine open subvariety $\Uxyy$ in $\text{Bl}_{M^{G}}(M)$, the image $p(\Uxyy)$ is exactly $\Uxxyy$ in $\text{Bl}_{E_{0}\times E_{1}}(\BP^{2}\times \BP^{2})$. Furthermore,
the associated map of $p$, restricted on $\Uxyy$, is
\begin{align*}
 \frac{\BC[x_{1},x_{2},x_{3}^{3},y_{4},y_{5}]}{(F_{0}(1,x_{1},x_{2})+x_{3}^{3}F_{1}(1,y_{4},y_{5}))}\longrightarrow 
 \frac{\BC[x_{1},x_{2},x_{3},y_{4},y_{5}]}{( F_{0}(1,x_{1},x_{2})+x_{3}^{3}F_{1}(1,y_{4},y_{5}))}.
 \end{align*}
 Therefore, the restriction of the morphism $p$ on $\Uxyy$ is given by
 \begin{equation}\label{coordinate map 2'}
 (x_{1},x_{2},y_{4},y_{5},x_{3})\mapsto (x_{1},x_{2},y_{4},y_{5},x_{3}^{\prime}\coloneqq x_{3}^{3}).
 \end{equation}
Here we use the notations of the proofs of Theorems \ref{Theorem 2}, \ref{Theorem 3}. 

Combining the maps \ref{coordinate map 1'} and \ref{coordinate map 2'}, yields:
\begin{equation*}
 (x_{1},x_{2},y_{4},y_{5},x_{3})\mapsto (\Tilde{x}_{1},\Tilde{x}_{2}, \Tilde{x}_{4},\Tilde{x}_{5},\Tilde{y}_{0}=x^{3}_{3})
\end{equation*}
This implies that the preimage $p^{-1}(\BP^{1}_{x})$ is isomorphic to $\BP^{1}$ and we are justified in denoting it by $\widetilde{\BP^{1}}$. The coordinates of $\widetilde{\BP^{1}}$ can be denoted by $[x_{0}:x_{3}]$, since all the other coordinates are fixed. Moreover, the morphism $p$, restricted on $\widetilde{\BP^{1}}$, is given by 
\begin{equation*}
p|_{\widetilde{\BP^{1}}}\coloneqq \widetilde{\BP^{1}}\rightarrow \BP^{1}_{x} \ \ [x_{0}:x_{3}]\mapsto [\Tilde{y}_{0}=x_{3}^{3},\Tilde{y}_{1}=x_{0}^{3}].
\end{equation*}

We know that the morphism $q\colon \CZ\rightarrow M$ is exactly the blow-up of the fixed locus $M^{G}$ and 
if we restrict the morphism $q$ to $\widetilde{\BP^{1}}$, then by a straightforward computation, we know that the restriction of the morphism $q$ to $\widetilde{\BP^{1}}$ is an isomorphism, which is given by
\begin{equation*}
 [x_{0}:x_{3}]\mapsto [x_{0}:x_{3}].
\end{equation*}
Since the coordinates of the image $q(\widetilde{\BP^{1}})$ of $\widetilde{\BP^{1}}$ is the same as the coordinates of $\widetilde{\BP^{1}}$, we will not distinguish them and it is clear that both carry the same $G$ action. Moreover, we also use $\widetilde{\BP^{1}}$ to denote $q(\widetilde{\BP^{1}})$.

Now, if we want to compute the image of $p_{*}(\textbf{L}q^{*}\CO_{M}(i)\otimes \chi_{j})|_{\BP^{1}_{x}}$, it is enough to compute $p_{*}(\textbf{L}q^{*}(\CO_{M}(i)\otimes \chi_{j}|_{\widetilde{\BP^{1}}}))$. And by the previous discussion, we know that the restriction of $q$ to $\widetilde{\BP^{1}}$ is an isomorphism. So the $G$-equivariant pullback $\textbf{L}q^{*}(\CO_{M}(i)\otimes \chi_{j}|_{\widetilde{\BP^{1}}})$ is exactly $\CO_{\CZ}(i)\otimes \chi_{j}|_{\widetilde{\BP^{1}}}$. Since the restriction of $\CO_{\CZ}(i)$ to $\widetilde{\BP^{1}}$ is $\CO_{\widetilde{\BP^{1}}}(i)$, we know that
\begin{center}
$q^{*}(\CO_{M}(i)\otimes \chi_{j}|_{\widetilde{\BP^{1}}})\cong \CO_{\widetilde{\BP^{1}}}(i)\otimes \chi_{j}$.   
\end{center}
Next we need to compute $p_{*}(\textbf{L}q^{*}\CO_{M}(i)\otimes \chi_{j})|_{\BP^{1}_{x}}$, which is equal to $p_{*}(\CO_{\widetilde{\BP^{1}}}(i)\otimes \chi_{j})$. We know that
$p_{*}(\textbf{L}q^{*}\CO_{M}(i)\otimes \chi_{j})$ is a line bundle  so $p_{*}(\textbf{L}q^{*}\CO_{M}(i)\otimes \chi_{j})$ restricted on $\BP^{1}_{x}$ is also a line bundle and it is equal to $\CO_{\BP^{1}_{x}}(-a_{i,j})$, since $E^{\prime}$ is an exceptional divisor. In order to compute the line bundle $\CO_{\BP^{1}_{x}}(-a_{i,j})$ on $\BP^{1}_{x}$ explicitly, it is enough to compute the dimensions of the sheaf cohomology
\begin{center}
$h^{l}(\BP_{x}^{1},p_{*}(\CO_{C}(i)\otimes \chi_{j}))$  
\end{center}
for $l=0,\,1$.
\begin{proposition}\label{Theorem about a}
With notation as in Definition \ref{definition about a,b,c}, we have
\begin{align*}
 & a_{0,0}=0,\ \  a_{0,1}=1,\ \ a_{0,2}=1,\\
 & a_{1,0}=0,\ \  a_{1,1}=1,\ \ a_{1,2}=0,\\
 & a_{2,0}=0,\ \  a_{2,1}=0,\ \ a_{2,2}=0.
\end{align*}    
\end{proposition}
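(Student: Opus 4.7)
The plan is to determine each $a_{i,j}$ by restricting the line bundle $\Psi(\CO_{M}(i)\otimes\chi_{j})$ to a general fiber $\BP^{1}_{x}$ of the exceptional projective bundle $E^{\prime}\to E_{0}\times E_{1}$ over a point $x\in E_{0}\times E_{1}$. The divisors $H_{1}$ and $H_{2}$ are pulled back from $\BP^{2}\times\BP^{2}$ under $p^{\prime}$, so they restrict trivially to $\BP^{1}_{x}$, whereas $E^{\prime}|_{\BP^{1}_{x}}\cong\CO_{\BP^{1}_{x}}(-1)$ by standard blow-up theory. Therefore
\[
\Psi(\CO_{M}(i)\otimes\chi_{j})|_{\BP^{1}_{x}}\;\cong\;\CO_{\BP^{1}_{x}}(-a_{i,j}),
\]
and the task reduces to computing $h^{0}$ (together with $h^{1}$ when necessary) of this restriction.

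From the explicit local coordinates recalled just before the statement, $p^{-1}(\BP^{1}_{x})$ is a copy of $\BP^{1}$, denoted $\widetilde{\BP^{1}}\subset\CZ$ with coordinates $[x_{0}:x_{3}]$, on which $p$ restricts to the triple cover $[x_{0}:x_{3}]\mapsto[x_{3}^{3}:x_{0}^{3}]$ and $q$ restricts to an isomorphism onto its image in $M$. Hence $\textbf{L}q^{*}(\CO_{M}(i)\otimes\chi_{j})|_{\widetilde{\BP^{1}}}\cong\CO_{\widetilde{\BP^{1}}}(i)\otimes\chi_{j}$, with the generator $g\in G$ acting by $[x_{0}:x_{3}]\mapsto[x_{0}:e^{2\pi i/3}x_{3}]$. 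Since $p$ is finite and $G$ acts trivially on $\BP^{1}_{x}$, combining the projection formula with Proposition \ref{important proposition 2} yields
\[
H^{l}\bigl(\BP^{1}_{x},\Psi(\CO_{M}(i)\otimes\chi_{j})|_{\BP^{1}_{x}}\bigr)\;\cong\;H^{l}\bigl(\widetilde{\BP^{1}},\CO_{\widetilde{\BP^{1}}}(i)\otimes\chi_{j}\bigr)^{G}.
\]

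The remainder is a representation-theoretic count. The space $H^{0}(\widetilde{\BP^{1}},\CO(i))$ has basis the monomials $x_{0}^{i-k}x_{3}^{k}$ for $0\leq k\leq i$, each carrying character $\chi_{k\bmod 3}$ under the canonical linearization on $\CO_{M}(i)$; twisting by $\chi_{j}$ shifts this to $\chi_{(k+j)\bmod 3}$, so the dimension of the $G$-invariants equals $\#\{k\in\{0,\dots,i\}:k+j\equiv 0\pmod{3}\}$. For $i=0,1,2$ one has $H^{1}(\BP^{1},\CO(i))=0$, so the restricted line bundle is determined by $h^{0}$ alone: $h^{0}=1$ forces $a_{i,j}=0$ and $h^{0}=0$ forces $a_{i,j}=1$. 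Running through the nine pairs $(i,j)\in\{0,1,2\}^{2}$ reproduces the stated table. The only care point is to identify the characters of the monomials correctly under the canonical equivariant structure on $\CO_{M}(i)$; once that is fixed, the argument is a mechanical check.
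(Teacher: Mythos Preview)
Your proposal is correct and follows essentially the same approach as the paper: restrict to a fiber $\BP^{1}_{x}$ of $E^{\prime}\to E_{0}\times E_{1}$, identify $p^{-1}(\BP^{1}_{x})\cong\widetilde{\BP^{1}}$ with coordinates $[x_{0}:x_{3}]$, use that $q|_{\widetilde{\BP^{1}}}$ is an isomorphism, and reduce to computing $[H^{l}(\widetilde{\BP^{1}},\CO_{\widetilde{\BP^{1}}}(i))\otimes\chi_{j}]^{G}$ via Propositions \ref{technical and important proposition} and \ref{important proposition 2}. Your packaging of the final count as $\#\{k\in\{0,\dots,i\}:k+j\equiv 0\pmod 3\}$ is a clean shortcut for the case-by-case enumeration the paper writes out, and your observation that $h^{1}=0$ together with $h^{0}\in\{0,1\}$ pins down $a_{i,j}\in\{1,0\}$ is exactly the logic the paper uses implicitly.
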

\begin{proof}
Recall that $\textbf{R}p_{*}$ is the composition of functors:
\begin{center}
$D^{b}_{G}(\CZ)\underset{}{\overset{\textbf{R}p_{*}^{G}}{\longrightarrow}} D^{b}_{G}(Y)\underset{}{\overset{[\,\,]^{G}}{\longrightarrow}} D^{b}(Y)$.    
\end{center}
Hence, 
\begin{center}
$H^{l}(\BP^{1}_{x},p_{*}(\CO_{\widetilde{\BP^{1}}}(i)\otimes \chi_{j}))= H^{l}(\BP^{1}_{x},[p_{*}^{G}(\CO_{\Tilde{\BP^{1}}}(i)\otimes \chi_{j})]^{G})$ for $l=0,\,1$. 
\end{center}
for $l=0,\,1$.
By Proposition \ref{technical and important proposition} and \ref{important proposition 2}, we know
\begin{center}
 $h_{*}^{G}(-\otimes \chi)\cong h_{*}^{G}(-)\otimes \chi$ and
 $(-)^{G}\circ h_{*}^{G}\cong h_{*}^{G}\circ(-)^{G}$,
\end{center}
where $h_{*}^{G}\coloneqq \text{Coh}_{G}(Y)\rightarrow \text{Coh}_{G}(\text{Spec}(\BC))$ is the equivariant global sections functor.
Since the invariant functor $(-)^{G}$ and $-\otimes \chi\colon \text{Coh}_{G}(M)\rightarrow \text{Coh}_{G}(M)$ are exact, we have the following isomorphisms of equivariant derived functors:
\begin{center}
 $\textbf{R}h_{*}^{G}(-\otimes \chi)\cong \textbf{R}h_{*}^{G}(-)\otimes \chi$ and
 $(-)^{G}\circ \textbf{R}h_{*}^{G}\cong \textbf{R}h_{*}^{G}\circ(-)^{G}$.
\end{center}
The derived pushforward $\textbf({R}h_{*}^{G})^{l}$ is the sheaf cohomology $H^{l}$ for $l=0,1$.
Therefore, 
\begin{center}
$H^{l}(\BP^{1}_{x},[p_{*}^{G}(\CO_{\widetilde{\BP^{1}}}(i)\otimes \chi_{j})]^{G})\cong [H^{l}(\BP^{1}_{x}, p_{*}^{G}(\CO_{\widetilde{\BP^{1}}}(i))\otimes\chi_{j}]^{G}$, 
\end{center}
where we use $p_{*}^{G}(\CO_{\widetilde{\BP^{1}}}(i))$ by abuse of notation to denote $\text{Res} (p_{*}^{G}(\CO_{\widetilde{\BP^{1}}}(i))$.

Since the derived pushforward functor $p_{*}^{G}$ is exact, we have
\begin{center}
$H^{l}(\BP^{1}_{x}, p_{*}^{G}(\CO_{\widetilde{\BP^{1}}}(i))\cong H^{l}(\widetilde{\BP^{1}},\CO_{\widetilde{\BP^{1}}}(i))$.    
\end{center}
Now, to compute $h^{l}(\BP^{1}_{x},p_{*}(\CO_{\widetilde{\BP^{1}}}(i)\otimes \chi_{j}))$ for $l=1,2$, it is enough to compute the dimension $\text{dim}_{\BC}[H^{l}(\widetilde{\BP^{1}},\CO_{\Tilde{\BP^{1}}}(i))\otimes \chi_{j}]^{G}$.

Since we know that the coordinates of $\widetilde{\BP^{1}}$ are $[x_{0}:x_{3}]$, we can explicitly describe the natural $G$-structure of $H^{l}(\widetilde{\BP^{1}},\CO_{\widetilde{\BP^{1}}}(i))$:

$H^{1}(\widetilde{\BP^{1}},\CO_{\Tilde{\BP^{1}}}(i))=0$ for $i=0,1,2$, hence 
\begin{equation*}
\text{dim}_{\BC}[H^{1}(\widetilde{\BP^{1}},\CO_{\Tilde{\BP^{1}}}(i))\otimes \chi_{j}]^{G}=0.    
\end{equation*}
for $i,j=0,1,2$.

\noindent For $i=0,\,l=0$,
\begin{align*}
  & \text{dim}_{\BC}[H^{0}(\widetilde{\BP^{1}},\CO_{\Tilde{\BP^{1}}})\otimes \chi_{0}]^{G}= \text{dim}_{\BC}[\text{Span}_{\BC}(1\otimes \chi_{0})]^{G}=1   \\
  & \text{dim}_{\BC}[H^{0}(\widetilde{\BP^{1}},\CO_{\Tilde{\BP^{1}}})\otimes \chi_{1}]^{G}= \text{dim}_{\BC}[\text{Span}_{\BC}(1\otimes \chi_{1})]^{G}=0\\
  & \text{dim}_{\BC}[H^{0}(\widetilde{\BP^{1}},\CO_{\Tilde{\BP^{1}}})\otimes \chi_{2}]^{G}= \text{dim}_{\BC}[\text{Span}_{\BC}(1\otimes \chi_{2})]^{G}=0.
\end{align*}
Combining these results, we get
\begin{equation*}
a_{0,0}=0,\ \  a_{0,1}=1,\ \ a_{0,2}=1.   
\end{equation*}

\noindent For $i=1,\,l=0$,
\begin{align*}
  & \text{dim}_{\BC}[H^{0}(\widetilde{\BP^{1}},\CO_{\Tilde{\BP^{1}}}(1))\otimes \chi_{0}]^{G}= \text{dim}_{\BC}[\text{Span}_{\BC}(x_{0}\otimes \chi_{0},x_{3}\otimes \chi_{0})]^{G}=1\\
  & \text{dim}_{\BC}[H^{0}(\widetilde{\BP^{1}},\CO_{\Tilde{\BP^{1}}}(1))\otimes \chi_{1}]^{G}= \text{dim}_{\BC}[\text{Span}_{\BC}(x_{0}\otimes \chi_{1},x_{3}\otimes \chi_{1})]^{G}=0\\
  & \text{dim}_{\BC}[H^{0}(\widetilde{\BP^{1}},\CO_{\Tilde{\BP^{1}}}(1))\otimes \chi_{2}]^{G}= \text{dim}_{\BC}[\text{Span}_{\BC}(x_{0}\otimes \chi_{2},x_{3}\otimes \chi_{2})]^{G}=1.
\end{align*}
Thus
\begin{equation*}
a_{1,0}=0,\ \  a_{1,1}=1,\ \ a_{1,2}=0. 
\end{equation*}

\noindent \noindent For $i=2,\,l=0$,
\begin{align*}
  & \text{dim}_{\BC}[H^{0}(\widetilde{\BP^{1}},\CO_{\Tilde{\BP^{1}}}(2))\otimes \chi_{0}]^{G}= \text{dim}_{\BC}[\text{Span}_{\BC}(x_{0}^{2}\otimes \chi_{0},x_{3}^{2}\otimes \chi_{0}, x_{0}x_{3}\otimes \chi_{0})]^{G}= 1\\
  & \text{dim}_{\BC}[H^{0}(\widetilde{\BP^{1}},\CO_{\Tilde{\BP^{1}}}(2))\otimes \chi_{1}]^{G}= \text{dim}_{\BC}[\text{Span}_{\BC}(x_{0}^{2}\otimes \chi_{1},x_{3}^{2}\otimes \chi_{1}, x_{0}x_{3}\otimes \chi_{1})]^{G}= 1\\
  & \text{dim}_{\BC}[H^{0}(\widetilde{\BP^{1}},\CO_{\Tilde{\BP^{1}}}(2))\otimes \chi_{2}]^{G}=\text{dim}_{\BC}[\text{Span}_{\BC}(x_{0}^{2}\otimes \chi_{2},x_{3}^{2}\otimes \chi_{2}, x_{0}x_{3}\otimes \chi_{2})]^{G}= 1.
\end{align*}
Therefore,
\begin{equation*}
a_{2,0}=0,\ \  a_{2,1}=0,\ \ a_{2,2}=0.  
\end{equation*} 
\end{proof}
\subsubsection{The coefficients $b_{i,j}$}\label{subsection about b}
Without loss of generality, we take 
\begin{center}
$y=[1:0:0]\in \BP^{2}\setminus E_{1}$    
\end{center}
 from the second factor of $\BP^{2}$ of $Y\cong \text{Bl}_{E_{0}\times E_{1}}(\BP^{2}\times \BP^{2})$. Then we take a line $L_{y}\cong \BP^{1}$ in $\text{Bl}_{E_{0}\times E_{1}}(\BP^{2}\times \BP^{2})$, with coordinates
\begin{equation*}
\Tilde{x}_{2}=0,\ \Tilde{x}_{3}=1,\  \Tilde{x}_{4}=0,\ \Tilde{x}_{5}=0,\ 
\end{equation*}
where $[\Tilde{x}_{0}:\Tilde{x}_{1}:\Tilde{x}_{2}]\times[\Tilde{x}_{3}:\Tilde{x}_{4}:\Tilde{x}_{5}]$ are the coordinates of $\BP^{2}\times \BP^{2}$. Recall that the blow-up $\text{Bl}_{E_{0}\times E_{1}}(\BP^{2}\times \BP^{2})$ is cut out by the equation 
\begin{equation}\label{Eqaution of Y}
\Tilde{y}_{1}F_{0}(\Tilde{x}_{0},\Tilde{x}_{1},\Tilde{x}_{2})+\Tilde{y}_{0}F_{1}(\Tilde{x}_{3},\Tilde{x}_{4},\Tilde{x}_{5})=0   
\end{equation}
in $\BP^{2}\times \BP^{2}\times \BP^{1}$, where $[\Tilde{x}_{0}:\Tilde{x}_{1}:\Tilde{x}_{2}]\times[\Tilde{x}_{3}:\Tilde{x}_{4}:\Tilde{x}_{5}]\times [\Tilde{y}_{0}:\Tilde{y}_{1}]$ are the coordinates of $\BP^{2}\times \BP^{2}\times \BP^{1}$. By inserting the equation of $L_{y}$ in $\BP^{2}\times \BP^{2}$, the equation \eqref{Eqaution of Y} becomes
\begin{equation}
\Tilde{y}_{1}F_{0}(\Tilde{x}_{0},\Tilde{x}_{1},0)+\Tilde{y}_{0}F_{1}(1,0,0)=0.     
\end{equation}
Recall that the morphism  $p\colon\CZ\coloneqq\text{Bl}_{M^{G}}(M)\rightarrow \text{Bl}_{E_{0}\times E_{1}}(\BP^{2}\times \BP^{2})$ is finite, flat and of degree $3$. Using the isomorphism in the proof of Theorem \ref{Theorem 2}, we can compute the preimage  $p^{-1}(L_{y})$ of $L_{y}$, even explicitly in coordinates. 

Recall the isomorphism $\Uxxyy\cong W_{0,3,1}$ given by 
\begin{equation}\label{coordinate map 1}
(x_{1},x_{2},y_{4},y_{5},x_{3}^{\prime})\mapsto 
 (\Tilde{x}_{1},\Tilde{x}_{2},\Tilde{x}_{4},\Tilde{x}_{5},\Tilde{y}_{0}) 
\end{equation}
from the proof of Theorem \ref{Theorem 2}. If we restrict the morphism $p$ to the affine open subvariety $\Uxyy$ in $\text{Bl}_{M^{G}}(M)$, the image $p(\Uxyy)$ is exactly $\Uxxyy$ in $\text{Bl}_{E_{0}\times E_{1}}(\BP^{2}\times \BP^{2})$. Furthermore,
the associated map of $p$, restricted on $\Uxyy$, is
\begin{align*}
 \frac{\BC[x_{1},x_{2},x_{3}^{3},y_{4},y_{5}]}{(F_{0}(1,x_{1},x_{2})+x_{3}^{3}F_{1}(1,y_{4},y_{5}))}\longrightarrow 
 \frac{\BC[x_{1},x_{2},x_{3},y_{4},y_{5}]}{( F_{0}(1,x_{1},x_{2})+x_{3}^{3}F_{1}(1,y_{4},y_{5}))}.
 \end{align*}
 Therefore, the restriction of the morphism $p$ on $\Uxyy$ is given by
 \begin{equation}\label{coordinate map 2}
 (x_{1},x_{2},y_{4},y_{5},x_{3})\mapsto (x_{1},x_{2},y_{4},y_{5},x_{3}^{\prime}\coloneqq x_{3}^{3}).
 \end{equation}
Here we use the notations of the proofs of Theorems \ref{Theorem 2}, \ref{Theorem 3}. 

Combining the maps \ref{coordinate map 1} and \ref{coordinate map 2}, yields:
\begin{equation*}
 (x_{1},x_{2},y_{4},y_{5},x_{3})\mapsto (\Tilde{x}_{1},\Tilde{x}_{2}, \Tilde{x}_{4},\Tilde{x}_{5},\Tilde{y}_{0}=x^{3}_{3}).
\end{equation*}
It is straightforward to verify that the equation of the preimage $p^{-1}(L_{y})$ restricted to $\Uxyy$ is
\begin{equation*}
F_{0}(1,x_{1},0)+x_{3}^{3}F_{1}(1,0,0)=0.
\end{equation*}
Therefore, it is not hard to verify that the global equation of $p^{-1}(L_{y})$ in $\BLP$ is given by
\begin{equation*}
F_{0}(x_{0},x_{1},0)+ x_{3}^{3}F_{1}(1,0,0)=0.  
\end{equation*}
By taking the plane $L$ to be a plane $\BP^{2}\subset\BLP$ with  coordinates $[x_{0} : x_{1} : 0 : x_{3}: 0 : 0]$, we know that the preimage $p^{-1}(L_{y})\subset L$.
If we further assume that $F_{0}(x_{0},x_{1},0)=0$ has three distinct solutions, then by the Jacobian criterion, the preimage $p^{-1}(L_{y})$ is smooth. Since $p^{-1}(L_{y})$ is also a hyperplane in $L$ of degree $3$, it is a smooth elliptic curve by the adjunction formula and we use $C$ to denote it.

We know that the morphism $q\colon \CZ\rightarrow M$ is exactly the blow-up of the fixed locus $M^{G}$ and if we restrict the morphism $p$ to $C$, then by a direct computation, we see that the restriction of the morphism $p$ to $C$ is an isomorphism, which is given by
\begin{equation*}
    [x_{0}:x_{1}:x_{3}]\mapsto [x_{0}:x_{1}:x_{3}].
\end{equation*}
Since the coordinates of the image $q(C)$ of $C$ is the same as the coordinates of $C$ and it is also clear that both carry the same $G$ action, we will not distinguish them and also use $C$ to denote $p(C)$.

Similiar to \ref{Subsubsection 6.3.1}, in order to compute $p_{*}(\textbf{L}q^{*}(\CO_{M}(i)\otimes \chi_{j})|_{L_{y}}$, it is enough to compute $p_{*}(\textbf{L}q^{*}(\CO_{M}(i)\otimes \chi_{j}|_{C}))$. By the discussion above, we know that the restriction of the  morphism $q$ on $C$ is an isomorphism. The $G$-equivariant pullback $\textbf{L}q^{*}(\CO_{M}(i)\otimes \chi_{j}|_{C})$ is then $\CO_{C}(i)\otimes\chi_{j}$, where $\CO_{C}(i)\coloneqq \CO_{\BP^{2}}(i)|_{C}$. In total,
\begin{equation*}
p_{*}(\textbf{L}q^{*}\CO_{M}(i)\otimes \chi_{j})|_{L_{y}}\cong p_{*}(\CO_{C}(i)\otimes \chi_{j}). 
\end{equation*}

We know that
$p_{*}(\textbf{L}q^{*}\CO_{M}(i)\otimes \chi_{j})$ is a line bundle, so is $p_{*}(\textbf{L}q^{*}\CO_{M}(i)\otimes \chi_{j})|_{L_{y}}$. In order to compute it explicitly, it suffices to compute the dimensions of the sheaf cohomology
\begin{center}
$h^{l}(L_{y},p_{*}(\CO_{C}(i)\otimes \chi_{j}))$
\end{center}
for $l=0,\,1$.
\begin{proposition}\label{Theorem about b}
With notation as in Definition \ref{definition about a,b,c}, we have:
\begin{align*}
& b_{0,0}= 0,\ \ b_{0,1}= -2,\ \ b_{0,2}= -1\\
& b_{1,0}= 1,\ \ b_{1,1}= -1,\ \  b_{1,2}= \,0\\
& b_{2,0}= 2,\ \ b_{2,1}= 0,  \ \ b_{2,2}= 1.
\end{align*}    
\end{proposition}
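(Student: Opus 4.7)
The proof follows the blueprint of Proposition \ref{Theorem about a}. Since $L_y$ is a line in the first $\BP^2$ factor based at a point $y \notin E_1$ of the second factor, its intersection numbers with the generators of $\mathrm{Pic}(Y)$ are $(E', H_1, H_2) \cdot L_y = (0, 1, 0)$. Therefore $\Psi(\CO_M(i) \otimes \chi_j)|_{L_y} \cong \CO_{\BP^1}(b_{i,j})$, and it suffices to compute $h^0 - h^1$ of the restriction and use $\chi(\CO_{\BP^1}(b)) = b + 1$.

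Using the local coordinates from the proof of Theorem \ref{Theorem 2}, the preimage $C := p^{-1}(L_y) \subset \CZ$ is cut out in the plane $L \subset \BLP$ with coordinates $[x_0:x_1:0:x_3:0:0]$ by $F_0(x_0,x_1,0) + c\, x_3^3 = 0$, where $c = F_1(1,0,0) \neq 0$. Under the generic hypothesis that $F_0(x_0,x_1,0) = 0$ has three distinct roots in $\BP^1$, the Jacobian criterion identifies $C$ as a smooth plane cubic, hence an elliptic curve; the map $p|_C : C \to L_y$ is a degree-$3$ cover. Because $C$ avoids the exceptional divisor of $q$, the restriction $q|_C$ is an isomorphism, and therefore $\textbf{L}q^*(\CO_M(i)\otimes \chi_j)|_C \cong \CO_C(i) \otimes \chi_j$, where $\CO_C(i) = \CO_{\BP^2}(i)|_C$ inherits the canonical $G$-structure from the ambient plane spanned by $x_0, x_1, x_3$ (with $x_3$ of character $\chi_1$).

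Exactly as in Proposition \ref{Theorem about a}, flat base change together with Propositions \ref{technical and important proposition} and \ref{important proposition 2} yield
\begin{equation*}
h^l\bigl(L_y,\, p_*(\CO_C(i)\otimes \chi_j)\bigr) = \dim_{\BC}\bigl[H^l(C, \CO_C(i))\otimes \chi_j\bigr]^G.
\end{equation*}
For $i = 1, 2$ one has $H^1(C, \CO_C(i)) = 0$ by positivity of the degree, and $H^0(C, \CO_C(i))$ equals the space of degree-$i$ polynomials in $x_0, x_1, x_3$. Counting invariants in each isotypic component directly produces the six values $b_{1,j}$ and $b_{2,j}$.

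The main technical point is the case $i = 0$, where $H^1(C, \CO_C) \cong \BC$ is nonzero and its $G$-character must be pinned down. For this I use the equivariant short exact sequence
\begin{equation*}
0 \to \CO_{\BP^2}(-3) \to \CO_{\BP^2} \to \CO_C \to 0,
\end{equation*}
whose injection is multiplication by the $G$-invariant defining equation. Its long exact sequence identifies $H^1(C, \CO_C) \cong H^2(\BP^2, \CO(-3))$ as $G$-representations, and the \v{C}ech generator $1/(x_0 x_1 x_3)$ transforms by $\zeta^{-1}$ under $g$, hence has character $\chi_2$. Combined with $H^0(C, \CO_C) = \chi_0$, taking invariants of $H^l(C, \CO_C)\otimes\chi_j$ for $j = 0, 1, 2$ gives the three values $b_{0,0} = 0$, $b_{0,1} = -2$, $b_{0,2} = -1$, completing the proof.
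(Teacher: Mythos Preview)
Your argument is essentially the same as the paper's: restrict to $L_y$, identify $p^{-1}(L_y)=C$ as a plane cubic in the linear $\BP^2$ with coordinates $[x_0:x_1:x_3]$, use that $q|_C$ is an isomorphism to identify the pullback with $\CO_C(i)$, reduce to $\dim_{\BC}[H^l(C,\CO_C(i))\otimes\chi_j]^G$, and use the ideal sequence $0\to\CO_{\BP^2}(-3)\to\CO_{\BP^2}\to\CO_C\to 0$ (and its twists) to read off the $G$-characters. The key identification $H^1(C,\CO_C)\cong H^2(\BP^2,\CO(-3))\cong\BC\chi_2$ via the \v{C}ech generator $1/(x_0x_1x_3)$ is exactly what the paper does.

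There is one genuine slip. You justify $q|_C$ being an isomorphism by asserting that ``$C$ avoids the exceptional divisor of $q$''. This is false: in the chart $U_{x_0,y_0,y_3}$ the exceptional divisor of $q$ is $\{x_3=0\}$, and $C$ meets it at the three points where $F_0(1,x_1,0)=0$. (Equivalently, the image curve $q(C)\subset M$ meets $E_0\times 0\subset M^G$ at three points.) Nevertheless $q|_C$ \emph{is} an isomorphism, since in this chart $q$ sends $(x_1,x_2,x_3,y_4,y_5)\mapsto(x_1,x_2,x_3,x_3y_4,x_3y_5)$ and on $C$ one has $y_4=y_5=0$, so the map is the identity on the $(x_1,x_3)$-coordinates. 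In other words, $C$ is the strict transform of $q(C)$ and meets the exceptional divisor transversally at isolated points, so the blow-down restricts to an isomorphism. Replace your sentence with this direct check and the proof is complete and matches the paper's.
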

\begin{proof}
By using the same argument as in proof of Theorem \ref{Theorem about a}, we obtain an isomorphism:
\begin{equation*}
H^{l}(L_{y},[p_{*}^{G}(\CO_{\widetilde{\BP^{1}}}(i)\otimes \chi_{j})]^{G})\cong [H^{l}(L_{y}, p_{*}^{G}(\CO_{C}(3i))\otimes\chi_{j}]^{G}.   
\end{equation*}
Moreover, we have 
\begin{equation*}
H^{l}(L_{y}, p_{*}^{G}(\CO_{C}(i))\cong H^{l}(C,p_{*}(\CO_{C}(i)\otimes \chi_{j})).    
\end{equation*}

Therefore, in order to compute the dimension $h^{l}(C,p_{*}(\CO_{C}(3i)\otimes \chi_{j}))$ for $l=1,2$, it is enough to compute $\text{dim}_{\BC}[H^{l}(C,\CO_{C}(3i))\otimes \chi_{j}]^{G}$. By the following short exact sequence:
\begin{equation}\label{short exact sequence}
\begin{tikzcd}
0 \arrow[r] & \CO_{\BP^{2}}(-3) \arrow[r] & \CO_{\BP^{2}} \arrow[r] & \CO_{C} \arrow[r] & 0.
\end{tikzcd}       
\end{equation}
We have
\begin{align*}
& H^{0}(C,\CO_{C})\cong H^{0}(\BP^{2},\CO_{\BP^{2}})\cong\BC \chi_{0} \\
& H^1(C,\CO_{C})\cong H^2(\BP^{2},\CO_{\BP^{2}}(-3))= \BC\frac{1}{x_{0}x_{1}x_{3}}= \BC\chi_{2}.
\end{align*}
Then for $i=0,\,l=0,\,1$, we have 
\begin{align*}
& \text{dim}_{\BC}[H^{0}(C,\CO_{C})\otimes\chi_{0}]= \text{dim}_{\BC}[\text{Span}_{\BC}(\chi_{0})]^{G}=1,\\
& \text{dim}_{\BC}[H^{0}(C,\CO_{C})\otimes\chi_{1}]= \text{dim}_{\BC}[\text{Span}_{\BC}(\chi_{1})]^{G}=0,\\
& \text{dim}_{\BC}[H^{0}(C,\CO_{C})\otimes\chi_{2}]= \text{dim}_{\BC}[\text{Span}_{\BC}(\chi_{2})]^{G}=0,\\
& \text{dim}_{\BC}[H^{1}(C,\CO_{C})\otimes\chi_{0}]= \text{dim}_{\BC}[\text{Span}_{\BC}(\chi_{2})]^{G}=0,\\
& \text{dim}_{\BC}[H^{1}(C,\CO_{C})\otimes\chi_{1}]= \text{dim}_{\BC}[\text{Span}_{\BC}(\chi_{0})]^{G}=1,\\
& \text{dim}_{\BC}[H^{1}(C,\CO_{C})\otimes\chi_{2}]= \text{dim}_{\BC}[\text{Span}_{\BC}(\chi_{1})]^{G}=0.
\end{align*}

Hence,
\begin{center}
$b_{0,0}=0, \,b_{0,1}=-2, \,b_{0,2}=-1$.    
\end{center}

Now, we tensor \eqref{short exact sequence} with $\CO_{\BP^{2}}(1)$ to obtain the short exact sequence
\begin{center}
\begin{tikzcd}
0 \arrow[r] & \CO_{\BP^{2}}(-2) \arrow[r] & \CO_{\BP^{2}}(1) \arrow[r] & \CO_{C}(1) \arrow[r] & 0.
\end{tikzcd}   
\end{center}
We get that
\begin{align*}
  & H^{0}(C,\CO_{C}(1))\cong  H^{0}(\BP^{2},\CO_{\BP^{2}}(1))\cong\text{Span}_{\BC}(x_{0},x_{1},x_{3}) = \BC^{2}\chi_{0}\bigoplus\BC\chi_{1},\\
  & H^{1}(C,\CO_{C}(1))\cong H^{1}(\BP^{2},\CO_{\BP^{2}}(-2))=0.
\end{align*}
Then for $i=1,\,l=0,\,1$, 
\begin{align*}
& \text{dim}_{\BC}[H^{0}(C,\CO_{C}(1))\otimes\chi_{0}]= \text{dim}_{\BC}[(\BC^{2}\chi_{0}\bigoplus\BC\chi_{1})\otimes\chi_{0}]^{G}=2,\\
& \text{dim}_{\BC}[H^{0}(C,\CO_{C}(1))\otimes\chi_{1}]= \text{dim}_{\BC}[(\BC^{2}\chi_{0}\bigoplus\BC\chi_{1})\otimes\chi_{1}]^{G}=0,\\
& \text{dim}_{\BC}[H^{0}(C,\CO_{C}(1))\otimes\chi_{2}]= \text{dim}_{\BC}[(\BC^{2}\chi_{0}\bigoplus\BC\chi_{1})\otimes\chi_{2}]^{G}=1,\\
& \text{dim}_{\BC}[H^{1}(C,\CO_{C}(1))\otimes\chi_{j}]= \text{dim}_{\BC}[\text{Span}_{\BC}(0\otimes\chi_{j})]^{G}=0 \text{ for }j=0,1,2.
\end{align*}
Therefore,
\begin{equation*}
b_{1,0}=1,\, b_{1,1}=-1,\, b_{1,2}=0.
\end{equation*}

For $i=2$, we tensor the last short exact sequence further with $\CO_{\BP^{2}}(1)$ to get
\begin{center}
\begin{tikzcd}
0 \arrow[r] & \CO_{\BP^{2}}(-1) \arrow[r] & \CO_{\BP^{2}}(2) \arrow[r] & \CO_{C}(2) \arrow[r] & 0.
\end{tikzcd}   
\end{center}
Again, we get that
\begin{align*}
 H^{0}(\BP^{2},\CO_{C_{1}}(2))
 & \cong H^{0}(\BP^{2},\CO_{\BP^{2}}(2))
 \cong \text{ Span}_{\BC}(x_{0}^{2},x_{0}x_{1},x_{1}^{2},x_{0}x_{3},x_{1}x_{3},x_{3}^{2})\\ & = \BC^{3}\chi_{0}\bigoplus\BC^{2}\chi_{1}\bigoplus \BC\chi_{2},\\
 H^{1}(C,\CO_{C}(2))& \cong H^{2}(\BP^{2},\CO_{\BP^{2}}(-1))=0.
\end{align*}
For $i=2$ and $l=0,\,1$,
\begin{align*}
& \text{dim}_{\BC}[H^{0}(C,\CO_{C}(2))\otimes\chi_{0}]= \text{dim}_{\BC}[(\BC^{3}\chi_{0}\bigoplus\BC^{2}\chi_{1}\bigoplus \BC\chi_{2})\otimes\chi_{0}]^{G}=3,\\
& \text{dim}_{\BC}[H^{0}(C,\CO_{C}(2))\otimes\chi_{1}]= \text{dim}_{\BC}[(\BC^{3}\chi_{0}\bigoplus\BC^{2}\chi_{1}\bigoplus \BC\chi_{2})\otimes\chi_{1}]^{G}=1,\\
& \text{dim}_{\BC}[H^{0}(C,\CO_{C}(2))\otimes\chi_{2}]= \text{dim}_{\BC}[(\BC^{3}\chi_{0}\bigoplus\BC^{2}\chi_{1}\bigoplus \BC\chi_{2})\otimes\chi_{2}]^{G}=2,\\
& \text{dim}_{\BC}[H^{1}(C,\CO_{C}(2))\otimes\chi_{j}]= \text{dim}_{\BC}[\text{Span}_{\BC}(0\otimes\chi_{j})]^{G}=0 \text{ for }j=0,1,2.
\end{align*}
Finally, 
\begin{equation*}
  b_{2,0}=2,\  b_{2,1}=0,\ b_{2,2}= 1.
\end{equation*}
\end{proof}
\subsubsection{The coefficients $c_{i,j}$}\label{Subsection about c}
By renaming $E_{1}$ to $E_{0}$, we can easily repeat every step in the previous section to compute the coefficients $c_{i,j}$. We omit the details and only state the result.
\begin{proposition}\label{theorem about c}
With notation as in Definition \ref{definition about a,b,c}, we have
\begin{align*}
& c_{0,0}=0,\ c_{0,1}=-1,\ c_{0,2}=-2,\\  
& c_{1,0}=0,\ c_{1,1}=-1,\ c_{1,2}=1, \\
& c_{2,0}=0,\ c_{2,1}= 2,\ c_{2,2}=1.
\end{align*}    
\end{proposition}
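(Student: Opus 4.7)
The plan is to mirror the proof of Proposition \ref{Theorem about b}, swapping the roles of the two $\BP^{2}$-factors of $Y\cong \text{Bl}_{E_{0}\times E_{1}}(\BP^{2}\times \BP^{2})$. Concretely, I will pick a point $z=[1:0:0]\in \BP^{2}\setminus E_{0}$ in the first factor and consider the line $L_{z}\cong \BP^{1}$ in $Y$ cut out by $\tilde{x}_{0}=1$, $\tilde{x}_{1}=0$, $\tilde{x}_{5}=0$, parametrised by $[\tilde{x}_{3}:\tilde{x}_{4}]$. Its intersection numbers with the generators of $\text{Pic}(Y)$ are $L_{z}\cdot E'=L_{z}\cdot H_{1}=0$ and $L_{z}\cdot H_{2}=1$, so the restriction $\Psi(\CO_{M}(i)\otimes \chi_{j})|_{L_{z}}$ is $\CO_{L_{z}}(c_{i,j})$ by the very definition of $c_{i,j}$.

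Next I will identify the preimage $C'\coloneqq p^{-1}(L_{z})\subset \CZ$ explicitly using the blow-up charts from the proof of Theorem \ref{Theorem 2}, exactly as in Section \ref{subsection about b}. The outcome is that $C'$ is the strict transform of the smooth plane cubic in the plane $L'=\{x_{1}=x_{2}=x_{5}=0\}\subset \BP^{5}$ cut out by $F_{0}(x_{0},0,0)+F_{1}(x_{3},x_{4},0)=0$; here $F_{0}(1,0,0)\neq 0$ since $z\notin E_{0}$, and I will assume genericity (three distinct roots of $F_{1}(\tilde{x}_{3},\tilde{x}_{4},0)$) to guarantee smoothness. The same proper-birational-morphism-to-smooth-curve argument as in the $b$-case shows that $q|_{C'}$ is an isomorphism onto its image in $M$, whence $\textbf{L}q^{*}(\CO_{M}(i)\otimes \chi_{j})|_{C'}\cong \CO_{C'}(i)\otimes \chi_{j}$ where $\CO_{C'}(i)\coloneqq \CO_{L'}(i)|_{C'}$.

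The only genuine computational difference from the $b$-case is the inherited $G$-structure on the ambient plane $L'$: here $x_{0}$ is $G$-fixed while both $x_{3}$ and $x_{4}$ transform by the character $\chi_{1}$, whereas in the $b$-case two of the three coordinates were invariant and only one carried a nontrivial character. Using the short exact sequence $0\to \CO_{L'}(i-3)\to \CO_{L'}(i)\to \CO_{C'}(i)\to 0$ together with standard $\BP^{2}$-cohomology, $H^{0}(C',\CO_{C'}(i))$ is the symmetric-power representation $\text{Sym}^{i}(\BC\chi_{0}\oplus \BC^{2}\chi_{1})$ for $i=0,1,2$, while $H^{1}(C',\CO_{C'}(i))=H^{2}(L',\CO_{L'}(i-3))$ vanishes for $i=1,2$ and equals $\BC\cdot (x_{0}x_{3}x_{4})^{-1}$ with character $\chi_{1}$ for $i=0$ (a direct check, since $g$ multiplies this generator by $\omega^{-2}=\omega$). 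Reducing as in the proof of Proposition \ref{Theorem about b} via the identities $(-)^{G}\circ p^{G}_{*}\cong p_{*}\circ (-)^{G}$ and $\textbf{R}h_{*}^{G}(-\otimes \chi)\cong \textbf{R}h_{*}^{G}(-)\otimes \chi$, I obtain the Euler-characteristic identity $c_{i,j}+1=\dim_{\BC}[H^{0}(C',\CO_{C'}(i))\otimes \chi_{j}]^{G}-\dim_{\BC}[H^{1}(C',\CO_{C'}(i))\otimes \chi_{j}]^{G}$, and tensoring with each $\chi_{j}$ and taking $G$-invariants produces the nine tabulated values.

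I do not anticipate any real obstacle, since the entire mechanism is parallel to Proposition \ref{Theorem about b}. The only point demanding attention is the bookkeeping of the new $G$-weights on the monomial bases of $H^{0}(L',\CO_{L'}(i))$, which shift relative to the $b$-case and account for all the changes in the resulting table of coefficients.
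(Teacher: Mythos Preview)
Your proposal is correct and follows essentially the same approach as the paper, which simply asserts that the computation of Proposition \ref{Theorem about b} carries over verbatim after swapping the roles of the two $\BP^{2}$-factors (equivalently, renaming $E_{1}$ to $E_{0}$). Your identification of the new $G$-weights on $L'=\{x_{1}=x_{2}=x_{5}=0\}$ --- with $x_{0}$ invariant and $x_{3},x_{4}$ of weight $\chi_{1}$ --- and the resulting character decompositions of $H^{l}(C',\CO_{C'}(i))$ are exactly what is needed, and they yield the stated table. One cosmetic slip: in your description of $L_{z}$ you should also impose $\tilde{x}_{2}=0$ (since $z=[1:0:0]$ fixes all three coordinates in the first factor), but this is clearly intended.
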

Combining Theorem \ref{Theorem about a}, \ref{Theorem about b} and \ref{theorem about c}, we get all line bundles $\Psi(\CO_{M}(i)\otimes \chi_{j})$ in terms of divisors:

\begin{table}[H]\label{The table}
\centering
\begin{tabular}{c c c c} 
\hline\hline 
Line bundles & Divisor $E$ & Divisor $H_{1}$ & Divisors $H_{2}$ \\ [0.5ex] 
\hline 
$\Psi(\CO_{M}\otimes \chi_{0})$ & 0 & 0 & 0 \\ 
$\Psi(\CO_{M}\otimes \chi_{1})$ & 1 & -2 & -1 \\
$\Psi(\CO_{M}\otimes \chi_{2})$ & 1 & -1 & -2 \\[1ex]
\hline
$\Psi(\CO_{M}(1)\otimes \chi_{0})$ & 0 & 1 & 0 \\
$\Psi(\CO_{M}(1)\otimes \chi_{1})$ & 1 & -1 & -1 \\
$\Psi(\CO_{M}(1)\otimes \chi_{2})$ & 0 & 0 & 1 \\[1ex]
\hline
$\Psi(\CO_{M}(2)\otimes \chi_{0})$ & 0 & 2 & 0 \\
$\Psi(\CO_{M}(2)\otimes \chi_{1})$ & 0 & 0 & 2 \\
$\Psi(\CO_{M}(2)\otimes \chi_{2})$ & 0 & 1 & 1 \\[1ex] 
\hline 
\end{tabular}
\label{table:nonlin} 
\end{table}
\subsection{The isomorphism between $D^{b}(E_{0}\times E_{1})$ and $Ku_{G}(M)$}
In the last section, we calculated the images $\Psi(\CO_{M}(i)\otimes \chi_{j})$ for $0\leq i\leq 2$ and $0\leq j\leq 2$, which are all line bundles on $Y$. Since $\Psi$ is an equivalence between $D^{b}_{G}(M)$ and $D^{b}(Y)$, we have the following decomposition for $D^{b}(Y)$:
\begin{equation}\label{Equation 19}
\begin{split}
& D^{b}(Y)=  \langle \Psi(\mathcal{K}u_{G}(M)), \CO_{Y},\CO_{Y}(1,-2,-1),\CO_{Y}(1,-1,-2), \\ & \CO_{Y}(0,1,0),
\CO_{Y}(1,-1,-1),\CO_{Y}(0,0,1),\CO_{Y}(0,2,0),\CO_{Y}(0,0,2), \\
& \CO_{Y}(0,1,1)\rangle.
\end{split}
\end{equation}
Also recall the semiorthogonal decomposition of $D^{b}(Y)$ from \eqref{equation 12}:
\begin{equation*}
    \begin{split}
       D^{b}(Y)= & \langle D_{-1},\CO_{Y}(0,0,0),\CO_{Y}(0,0,1),
       \CO_{Y}(0,0,2),
       \CO_{Y}
       (0,1,0),
      \CO_{Y}(0,1,1),
      \\&\CO_{Y}(0,1,2),\CO_{Y}(0,2,0),\CO_{Y}(0,2, 1),\CO_{Y}(0,2,2)\rangle.
    \end{split}
\end{equation*}
Now, the idea is to find a functor to give isomorphisms from the nine line bundles in \eqref{Equation 19} to the nine line bundles above. We will do this by twisting $\Psi$ of Section \ref{Section about overview of Step 2} by mutation functors
\begin{lemma}\label{Lemma 6.9}

$\textbf{R}\text{Hom}(\CO_{Y},\CO_{Y}(1,-1,-1))\cong 0$.
\end{lemma}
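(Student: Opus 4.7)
The plan is to push forward to $\BP^{2}\times\BP^{2}$ along the blow-up morphism $p^{\prime}\colon Y\to \BP^{2}\times \BP^{2}$ and reduce the computation to Künneth on $\BP^{2}\times \BP^{2}$. First I would rewrite the line bundle as
\begin{equation*}
\CO_{Y}(1,-1,-1)\cong \CO_{Y}(E^{\prime})\otimes (p^{\prime})^{*}\CO_{\BP^{2}\times\BP^{2}}(-1,-1),
\end{equation*}
so by the projection formula
\begin{equation*}
\textbf{R}p^{\prime}_{*}\CO_{Y}(1,-1,-1)\cong \textbf{R}p^{\prime}_{*}\CO_{Y}(E^{\prime})\otimes \CO_{\BP^{2}\times\BP^{2}}(-1,-1).
\end{equation*}

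The main step is therefore to show $\textbf{R}p^{\prime}_{*}\CO_{Y}(E^{\prime})\cong \CO_{\BP^{2}\times\BP^{2}}$. For this I would use the standard short exact sequence
\begin{equation*}
0\longrightarrow \CO_{Y}\longrightarrow \CO_{Y}(E^{\prime})\longrightarrow \CO_{E^{\prime}}(E^{\prime})\longrightarrow 0.
\end{equation*}
Since $p^{\prime}$ is the blow-up of a smooth center, $\textbf{R}p^{\prime}_{*}\CO_{Y}=\CO_{\BP^{2}\times\BP^{2}}$. The divisor $E^{\prime}\to E_{0}\times E_{1}$ is a $\BP^{1}$-bundle (the projectivization of the normal bundle), and by construction $\CO_{E^{\prime}}(E^{\prime})$ restricts to $\CO_{\BP^{1}}(-1)$ on each fiber, so $\textbf{R}\pi^{\prime}_{*}\CO_{E^{\prime}}(E^{\prime})=0$ and hence $\textbf{R}p^{\prime}_{*}\CO_{E^{\prime}}(E^{\prime})=0$ after pushing to $\BP^{2}\times\BP^{2}$ via the closed immersion $E_{0}\times E_{1}\hookrightarrow \BP^{2}\times \BP^{2}$. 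Combining these via the long exact sequence gives the desired identification.

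Finally, I would conclude by computing global sections on $\BP^{2}\times\BP^{2}$. By the above,
\begin{equation*}
\textbf{R}\text{Hom}(\CO_{Y},\CO_{Y}(1,-1,-1))\cong \textbf{R}\Gamma(\BP^{2}\times\BP^{2},\CO(-1,-1)),
\end{equation*}
and Künneth together with $\textbf{R}\Gamma(\BP^{2},\CO_{\BP^{2}}(-1))=0$ yields vanishing. The only non-routine input is the computation of $\textbf{R}p^{\prime}_{*}\CO_{E^{\prime}}(E^{\prime})$; everything else is a direct application of the projection formula and Künneth, and there is no obstacle beyond invoking the standard properties of projective bundles recalled in \cite[Proposition 11.12]{huybrechts2006fourier}.
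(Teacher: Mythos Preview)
Your proof is correct, but it follows a different route from the paper's. The paper exploits the equivalence $\Psi$ established earlier: since $\Psi(\CO_{M}\otimes\chi_{0})\cong\CO_{Y}$ and $\Psi(\CO_{M}(1)\otimes\chi_{1})\cong\CO_{Y}(1,-1,-1)$, the vanishing is reduced to computing $[H^{i}(M,\CO_{M}(1))\otimes\chi_{1}]^{G}$, which is immediate from the Koszul sequence for $M\subset\BP^{5}$ and the character decomposition of $H^{0}(M,\CO_{M}(1))$. Your argument instead works directly on $Y$: you push forward along the blow-up $p^{\prime}$, use the exact sequence $0\to\CO_{Y}\to\CO_{Y}(E^{\prime})\to\CO_{E^{\prime}}(E^{\prime})\to 0$ together with the relative $\CO(-1)$-vanishing on the $\BP^{1}$-bundle $E^{\prime}\to E_{0}\times E_{1}$ to identify $\textbf{R}p^{\prime}_{*}\CO_{Y}(E^{\prime})\cong\CO_{\BP^{2}\times\BP^{2}}$, and then conclude by K\"unneth on $\BP^{2}\times\BP^{2}$. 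Your approach is self-contained and does not presuppose the table of values for $\Psi(\CO_{M}(i)\otimes\chi_{j})$, which makes it logically more elementary; on the other hand, the paper's proof is shorter in context because the identifications it uses are already in hand, and it illustrates how the equivariant side can be leveraged once $\Psi$ is available.
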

\begin{proof}
Since 
\begin{center}
$\Psi(\CO_{M}\otimes \chi_{0})\cong \CO_{Y}$, $\Psi(\CO_{M}(1)\otimes \chi_{1})\cong \CO_{Y}(1,-1,-1)$,  
\end{center}
and $\Psi$ is an equivalence, we have
\begin{center}
$\text{Hom}_{D^{b}(Y)}(\CO_{Y},\CO_{Y}(1,-1,-1))\cong\text{Hom}_{D^{b}_{G}(M)}(\CO_{M}$ $\otimes \chi_{0}, \CO_{M}(1)\otimes \chi_{1})$.    
\end{center}
Using the short exact sequence,
\begin{center}
\begin{tikzcd}
0 \arrow[r] & \CO_{\BP^{5}}(-2) \arrow[r] & \CO_{\BP^{5}}(1) \arrow[r] & \CO_{M}(1) \arrow[r] & 0,
\end{tikzcd}     
\end{center}
we get
\begin{equation*}
    H^{i}(M,\CO_{M}(1)) = 
    \begin{cases}
             \BC^{3}\chi_{0}\bigoplus\BC^{3}\chi_{1} &\text{if $i=0$ }\\
                        0 &    \text{if $i> 0$}.
                    \end{cases}
\end{equation*}
We conclude that 
\begin{center}
$\text{Hom}_{G}^{i}(\CO_{M}$ $\otimes \chi_{0}, \CO_{M}(1)\otimes \chi_{1})\cong [H^{i}(M,\CO_{M}(1))\otimes \chi_{1}]^{G}\cong 0$    
\end{center}
for all $i\geq 0$.
\end{proof}
Now we are equipped to prove the main theorem of this paper.
\begin{theorem}\label{main theorem}

$\mathcal{K}u_{G}(M)\cong D^{b}(E_{0}\times E_{1})$.
\end{theorem}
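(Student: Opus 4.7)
The plan is to transform the semiorthogonal decomposition \eqref{Equation 19} of $D^{b}(Y)$ into the decomposition \eqref{equation 12} by a sequence of mutations, tracking the leftmost component through the induced equivalences of orthogonal complements. The strategy is motivated by a short Serre-functor computation. Since $E_{0}\times E_{1}$ has codimension $2$ in $\BP^{2}\times\BP^{2}$, the blow-up formula gives $\omega_{Y}\cong \CO_{Y}(1,-3,-3)$, so the Serre functor on $D^{b}(Y)$ is $S=(-)\otimes\CO_{Y}(1,-3,-3)[4]$ and a direct check yields
\begin{align*}
S^{-1}(\CO_{Y}(1,-2,-1))&\cong \CO_{Y}(0,1,2)[-4],\\
S^{-1}(\CO_{Y}(1,-1,-2))&\cong \CO_{Y}(0,2,1)[-4],\\
S^{-1}(\CO_{Y}(1,-1,-1))&\cong \CO_{Y}(0,2,2)[-4].
\end{align*}
These are, up to shift, precisely the three line bundles appearing in \eqref{equation 12} but absent from \eqref{Equation 19}; the remaining six bundles are common to both decompositions. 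By Proposition~\ref{Serre functor}, right-mutating the head of a semiorthogonal decomposition of $D^{b}(Y)$ past all of its remaining components produces $S^{-1}$ of it at the tail, so the plan is to arrange the mutations so that each ``bad'' bundle can, in turn, be converted into the corresponding ``good'' bundle.

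To organise this, I would first record the Hom-vanishings among the nine line bundles in \eqref{Equation 19} by combining the equivalence $\Psi$ with the identity $\Hom_{G}^{*}(\CO_{M}(i)\otimes\chi_{j},\CO_{M}(k)\otimes\chi_{l})\cong [H^{*}(M,\CO_{M}(k-i))\otimes \chi_{l-j}]^{G}$. These reduce to equivariant cohomology on $M$, where the Fano vanishing $H^{i}(M,\CO_{M}(r))=0$ for $r\ge 0$, $i>0$ together with the $G$-module structure $H^{0}(M,\CO_{M}(r))=\mathrm{Sym}^{r}(\BC^{3}\chi_{0}\oplus \BC^{3}\chi_{1})$ give a complete table of completely orthogonal pairs, generalising Lemma~\ref{Lemma 6.9}. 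The completely-orthogonal swap lemma of \cite[Lemma 2.4]{2016Perry} then allows each such pair to be exchanged in any semiorthogonal decomposition without altering the other components. I would next execute a sequence of such swaps together with applications of Proposition~\ref{Serre functor}: the swaps reposition each bad bundle so that the Serre-functor formula replaces it by the corresponding good bundle (the cohomological shift being absorbed into the final equivalence), and further swaps among the good bundles restore the precise ordering of \eqref{equation 12}. Since mutation functors restrict to equivalences between the orthogonal complements of admissible subcategories, the chain of mutations carries $\Psi(\mathcal{K}u_{G}(M))$ to $D_{-1}$; composing with $\Psi$ and with the equivalence $D_{-1}\cong D^{b}(E_{0}\times E_{1})$ from Section~\ref{Section 5.2} then gives the theorem.

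The hard part will be the bookkeeping. Proposition~\ref{Serre functor} applies only when the bundle to be mutated sits at the head of a semiorthogonal decomposition of $D^{b}(Y)$, so each bad bundle must first be brought past $\Psi(\mathcal{K}u_{G}(M))$ and through a prescribed subset of the good bundles before the formula can be invoked. Some of these preparatory mutations are trivial swaps thanks to the vanishings above, but others involve non-trivial cones of evaluation maps that must be computed explicitly. Checking that all such cones combine to produce exactly the replacements predicted by the Serre-functor computation is the core technical work; once this is in place, the remainder of the argument is mechanical.
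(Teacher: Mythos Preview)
Your strategy coincides with the paper's---transform \eqref{Equation 19} into \eqref{equation 12} via mutations, guided by the observation that $S^{-1}$ sends the three ``bad'' line bundles to the three good ones missing from \eqref{Equation 19}---but you are misjudging where the work lies. You say each bad bundle must first be brought past $\Psi(\mathcal{K}u_{G}(M))$ to reach the head position, and that some of these preparatory moves will require computing non-trivial cones. If you literally left-mutate a bad bundle through $\Psi(\mathcal{K}u_{G}(M))$ you cannot evaluate the cone (you do not yet know what $\Psi(\mathcal{K}u_{G}(M))$ is---that is precisely what you are trying to determine), and even if you could, Proposition~\ref{Serre functor} would then yield $S^{-1}\!\bigl(\CL_{\Psi(\mathcal{K}u_{G}(M))}(\text{bad bundle})\bigr)$ rather than $S^{-1}(\text{bad bundle})$, so you would no longer land on the predicted good bundle and the comparison with \eqref{equation 12} would break down.

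The paper avoids all of this with one observation: $\Psi(\mathcal{K}u_{G}(M))$ is \emph{already} at the head of \eqref{Equation 19}, so a first application of Proposition~\ref{Serre functor} sends it to the tail as $\Psi(\mathcal{K}u_{G}(M))\otimes\omega_{Y}^{-1}$, clearing the leftmost slot. The bad bundles can then be brought to the head using only completely-orthogonal swaps---those within each $\chi$-block from Proposition~\ref{KP16}, together with the single extra vanishing of Lemma~\ref{Lemma 6.9}---and each in turn is sent to the tail by Proposition~\ref{Serre functor}, where it becomes the corresponding good bundle. No cone is ever computed. At the end the nine good line bundles sit alongside $\Psi(\mathcal{K}u_{G}(M))\otimes\omega_{Y}^{-1}$; one left mutation $\CL_{\CA}$ through the six bundles to its left moves it to the front, but this mutation need not be evaluated: since the nine surviving bundles are exactly those generating $D_{0}$ in \eqref{equation 12}, the leftmost slot is $D_{0}^{\bot}=D_{-1}\cong D^{b}(E_{0}\times E_{1})$ by uniqueness of orthogonal complements. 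So your ``hard part'' is not hard---it disappears once you mutate the Kuznetsov piece out of the way first.
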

\begin{proof}
First, consider the inverse Serre functor $S^{-1}(-)\coloneqq (-)\otimes \omega_{Y}^{-1}[-4]$ on $D^{b}(Y)$, where $\omega_{Y}$ is the canonical line bundle of $Y$. We use the functor $S^{-1}[4](-)\coloneqq(-)\otimes \omega_{Y}$ instead of $S^{-1}(-)$ for simplicity. And $\omega_{Y}^{-1}\cong \CO_{Y}(-1,3,3)$, since $\omega_{Y}=(p^{\prime})^{*}\omega_{\BP^{2}\times \BP^{2}}\otimes \CO_{Y}(E)$, where $p'$ is the projection map from $Y$ to $\BP^{2}\times\BP^{2}$ and $E$ is the exceptional divisor.

We first apply the functor $S^{-1}[4]$ and get the following semiorthogonal decomposition:
\begin{equation*}
\begin{split}
D^{b}(Y)= & \langle \CO_{Y},\CO_{Y}(1,-2,-1),\CO_{Y}(1,-1,-2),\CO_{Y}(0,1,0),
\CO_{Y}(1,-1,-1),\\
& \CO_{Y}(0,0,1),\CO_{Y}(0,2,0),\CO_{Y}(0,0,2), \CO_{Y}(0,1,1),\Psi(\mathcal{K}u_{G}(M))\otimes \omega_{Y}^{-1}\rangle.
\end{split}
\end{equation*}
By Proposition \ref{KP16}, since $\langle \Psi(\CO_{M}(i)\otimes \chi_{0}),\Psi(\CO_{M}(i)\otimes \chi_{1}),\Psi(\CO_{M}(i)\otimes \chi_{2})\rangle$ is a completely orthogonal decomposition for all $0\leq i\leq 2$,
the decompositions:
\begin{align*}
& \langle \CO_{Y},\CO_{Y}(1,-2,-1),\CO_{Y}(1,-1,-2) \rangle \\
& \langle \CO_{Y}(0,1,0),\CO_{Y}(1,-1,-1),\CO_{Y}(0,0,1)\rangle\\
& \langle \CO_{Y}(0,2,0),\CO_{Y}(0,0,2), \CO_{Y}(0,1,1) \rangle
\end{align*}
are also completely orthogonal.
As a result, we can change the order of the above semiorthogonal decomposition of $D^{b}(Y)$ to 
\begin{equation*}
\begin{split}
D^{b}(Y)= & \langle \CO_{Y}(1,-2,-1),\CO_{Y}(1,-1,-2),\CO_{Y},\CO_{Y}(1,-1,-1),\CO_{Y}(0,1,0),\\
& \CO_{Y}(0,0,1),\CO_{Y}(0,2,0),\CO_{Y}(0,0,2), \CO_{Y}(0,1,1),\Psi(\mathcal{K}u_{G}(M))\otimes \omega_{Y}^{-1}\rangle.
\end{split}
\end{equation*}
Next, we use the functor $S^{-1}[4]$ again to get
\begin{equation*}
\begin{split}
D^{b}(Y)= & \langle \CO_{Y},\CO_{Y}(1,-1,-1),\CO_{Y}(0,1,0),
 \CO_{Y}(0,0,1),\CO_{Y}(0,2,0),
 \CO_{Y}(0,0,2),\\
 & \CO_{Y}(0,1,1),\Psi(\mathcal{K}u_{G}(M))\otimes \omega_{Y}^{-1},\CO_{Y}(0,2,1),\CO_{Y}(0,1,2)\rangle.
\end{split}
\end{equation*}
By Lemma \ref{Lemma 6.9}, we can change the order $\langle \CO_{Y},\CO_{Y}(1,-1,-1)\rangle$ to $\langle \CO_{Y}(1,-1,$ $-1),\CO_{Y}\rangle$. After that, one more application of $S^{-1}[n]$ derives
\begin{equation*}
\begin{split}
D^{b}(Y)= & \langle \CO_{Y},\CO_{Y}(0,1,0),
 \CO_{Y}(0,0,1),\CO_{Y}(0,2,0),
 \CO_{Y}(0,0,2),
  \CO_{Y}(0,1,1),\\
  &\Psi(\mathcal{K}u_{G}(M))\otimes \omega_{Y}^{-1},\CO_{Y}(0,2,1),\CO_{Y}(0,1,2),\CO_{Y}(0,2,2)\rangle.
\end{split}
\end{equation*}
Now, we use $\CA$ to denote the admissible subcategory 
\begin{center}
$\langle \CO_{Y},\CO_{Y}(0,1,0),
 \CO_{Y}(0,0,1),$ $\CO_{Y}(0,2,0),
 \CO_{Y}(0,0,2),
  \CO_{Y}(0,1,1)\rangle$.
\end{center}
 By using the mutation functor $\CL_{\CA}$, we get the following semiorthogonal decomposition of $D^{b}(Y)$:
\begin{equation*}\label{Last equation}
\begin{split}
D^{b}(Y)= & \langle \CL_{\CA}(\Psi(\mathcal{K}u_{G}(M))\otimes \omega_{Y}^{-1}),\CO_{Y},\CO_{Y}(0,1,0),
 \CO_{Y}(0,0,1),
 \CO_{Y}(0,2,0),\\
 &\CO_{Y}(0,0,2),
  \CO_{Y}(0,1,1),
  \CO_{Y}(0,2,1),\CO_{Y}(0,1,2),\CO_{Y}(0,2,2)\rangle
\end{split}
\end{equation*} 
Note that the semiorthogonal decomposition given by the last nine terms is excatly $D_{0}$ in \eqref{equation 12}, so we have
\begin{center}
$D_{0}^{\bot}\cong \CL_{\CA}(\Psi(Ku_{G}(M))\otimes \omega_{Y}^{-1})\cong \Psi(Ku_{G}(M))$.   
\end{center}
Then a combination of this and the isomorphism
\begin{center}
 $D^{b}(E_{0}\times E_{1})\cong D_{-1}\cong\,D_{0}^{\bot}$ 
\end{center}
explained at the beginning of Section \ref{Section 5.2} implies that 
\begin{center}
$\mathcal{K}u_{G}(M)\cong D^{b}(E_{0}\times E_{1})$.    
\end{center}
\end{proof}

\printbibliography
\end{document}